\begin{document}
\newtheorem{thm}{Theorem}[section]
\newtheorem*{thm*}{Theorem}%[section]
\newtheorem{lem}[thm]{Lemma}
\newtheorem{prop}[thm]{Proposition}
\newtheorem{cor}[thm]{Corollary}
\newtheorem*{conj}{Conjecture}%[section]
\newtheorem{proj}[thm]{Project}%[section]
\newtheorem{question}[thm]{Question}
\newtheorem{rem}{Remark}[section]

\theoremstyle{definition}
\newtheorem{defn}{Definition}
\newtheorem*{remark}{Remark}
\newtheorem{exercise}{Exercise}
\newtheorem*{exercise*}{Exercise}
\numberwithin{equation}{section}

\newcommand{\Li}{\operatorname{Li}}

\title[Eigenvalue Statistics on Random Covers] {Smooth Linear Eigenvalue Statistics on Random Covers of Compact Hyperbolic Surfaces -- A Central Limit Theorem and Almost Sure RMT Statistics}
\author{Yotam Maoz}
\address{School of Mathematical Sciences, Tel Aviv University, Tel Aviv 69978, Israel}
\email{yotammaoz@mail.tau.ac.il}

\date{\today}
\begin{abstract}
	We study smooth linear spectral statistics of twisted Laplacians on random $n$-covers of a fixed compact hyperbolic surface $X$.
	We consider two aspects of such statistics. The first is the fluctuations of such statistics in a small energy window around a fixed energy level when averaged over the space of all degree $n$ covers of $X$.
	The second is the centered energy variance of a typical surface, a quantity similar to the normal energy variance.

	In the first case, we show a central limit theorem. Specifically, we show that the distribution of such fluctuations tends to a Gaussian with variance given by the
	corresponding quantity for the Gaussian Orthogonal/Unitary Ensemble (GOE/GUE). In the second case, we show that the centered energy variance of a typical random $n$-cover is that of the GOE/GUE.
	In both cases, we consider a double limit where first we let $n$ - the covering degree, go to $\infty$ then let $L\to \infty$ where $1/L$ is the window length.

	A fundamental component of our proofs are the results we prove in \cite{surfaces}
	which concern the random cover model for random surfaces.
\end{abstract}

\maketitle

\pagebreak

\tableofcontents

\pagebreak

\section{Introduction}
Let $X$ be a compact hyperbolic surface of genus $g\geq2$, and fundamental
group $\Gamma\stackrel{\text{def}}{=}\left\langle a_{1},b_{1},...,a_{g},b_{g}|[a_{1},b_{1}]...[a_{g},b_{b}]\right\rangle $ (which we sometimes view as a subgroup of $PSL_{2}(\mathbb{R}))$.
In addition, let $\chi:\Gamma\to \mathbb{C}$  be a complex one-dimensional unitary
representation.
We denote by $\Delta_{\chi}$ the Laplacian on $X$ twisted by $\chi$. That is, $\Delta_{\chi}$ is the usual
hyperbolic Laplacian acting on smooth functions $f:\mathbb{H}\to \mathbb{C}$
satisfying the equivariance property $f(\gamma z)=\chi(\gamma)f(z)$
for all $\gamma\in\Gamma$. The space of all such functions is equipped
with an $L^{2}$ norm defined as:
\[
	\Vert f\Vert^{2}=\int_{\mathcal{F}}\Vert f(z)\Vert^{2}dVol(z),
\]
where $\mathcal{F}$ is some compact fundamental domain for $\Gamma$ and $Vol$ is hyperbolic volume.
The twisted Laplacian $\Delta_{\chi}$ has a unique self-adjoint
extension with a discrete spectrum, which is the central object of this
paper.

Let $o\in X$ be a point. Recall that elements of $\text{Hom}(\Gamma,S_{n})$, consisting of
all group morphisms $\Gamma\to S_{n}$, are in a bijection with $n$-sheeted
covers of $X$ with a labeled fiber $\{1,...,n\}$ of $o$. See \cite{MP1} and Section \ref{bkgrd} below. From now on we denote by $X_{\phi}$ the cover of $X$ corresponding to a $\phi \in \text{Hom}(\Gamma,S_{n})$.
Thus, by considering the uniform probability measure
on the finite\footnote{One can show that for a finite group $G$ we have the connection:
\[
	\#\text{Hom}(\Gamma,G) = |G|^{2g-1}\zeta^{G}(2g-2),
\]
where $\zeta^{G}(s) = \sum_{\rho\in \text{Irrep}(G)}\dim(\rho)^{-s}$
is the Witten zeta function of $G$ and $\text{Irrep}(G)$ denotes the set of complex irreducible representations of $G$. See \cite[Proposition 3.2]{LS}.
} set $\text{Hom}(\Gamma,S_{n})$ one obtains a notion
of a random $n$-sheeted covering of $X$. We denote by $\mathbb{E}_{n}[\cdot]$
the expected value operator on this space, that is:
\[
	\mathbb{E}_{n}[T]=\frac{1}{\#\text{Hom}(\Gamma,S_{n})}\sum_{\phi\in\text{Hom}(\Gamma,S_{n})}T(\phi),
\]
where $T$ is some random variable on this space.

As $n$-sheeted covers of $X$ are also compact hyperbolic surfaces,
each $n$-sheeted cover $X_{\phi}$ of $X$ comes with its own twisted
Laplacian $\Delta_{\phi,\chi}$ and thus its own spectrum. We denote
the spectrum of $\Delta_{\phi,\chi}$ by $\{\lambda_{\phi,\chi,j}\}_{j\geq 0}$
(counted with multiplicity) and fix, for all $j\geq 0$, an element:
\[
	r_{\phi,\chi,j}\in\mathbb{R}\cup i\mathbb{R},
\]
such that $\lambda_{\phi,\chi,j}=1/4+(r_{\phi,\chi,j})^{2}$.
Also, note that as $n\to \infty$ asymptotically almost all $X_{\phi}$ are connected \cite[Theorem 1.12]{LS}.
\par

Let $\psi$ be an even function whose Fourier transform $\hat{\psi}$
is smooth and compactly supported on $[-1,1]$. In particular, $\psi$ extends to an entire function.
Our notion of the Fourier transform is:
\par
\[
	\hat{\psi}(s)=\frac{1}{2\pi}\int_{\mathbb{R}}\psi(x)e^{-isx}dx.
\]

Let $\alpha,L>0$ and set:
\[
	h(r)=\psi(L(r-\alpha))+\psi(L(r+\alpha)).
\]
We consider the smooth
counting function:
\[
	N_{\phi}(L)=\sum_{j\geq0}h(r_{\phi,\chi,j}),
\]
defined for any $n$-sheeted cover $X_{\phi}$ of $X$ corresponding
to an element
\newline $\phi\in\text{Hom}(\Gamma,S_{n})$.
Note that for large $\alpha, L\gg1$ the quantity $N_{\phi}(L)$ is a proxy for the number of eigenvalues
in a window of size $\approx 1/L$ around $\alpha^{2}$. One can, therefore, consider $N_{\phi}(L)$
as a random variable over the space of $n$-sheeted covers of $X$. To avoid confusion,
we let $N_{n}(L)$ denote the value of $N_{\phi}(L)$ for a random $\phi\in\text{Hom}(\Gamma,S_{n})$.
Thus, $N_{n}(L)$ is a random variable on the space $\text{Hom}(\Gamma,S_{n})$, while $N_{\phi}(L)$ always stands for the value of this statistic for a given $\phi\in\text{Hom}(\Gamma,S_{n})$.

\subsection{Statistics for a Fixed Height and a Random Surface}
The first natural question to ask of $N_{n}(L)$ is the following:
\begin{question}\label{q1}
	If $\alpha$, the window height, is fixed, what can be said of the distribution of $N_{n}(L)$?
\end{question}

A natural place to start trying to answer Question \ref{q1} is to consider the expected value $\mathbb{E}_{n}\left[N_{n}(L)\right]$. Using the twisted trace formula, Theorem \ref{trace}, in the next section, one finds that as $n\to\infty$ we have:
\[
	\mathbb{E}_{n}\left[N_{n}(L)\right] \sim C_{\alpha}\frac{(g-1)n}{L}\int_{\mathbb{R}}\psi(r)dr,
\]
where $C_{\alpha} = 2\alpha \tanh(\pi\alpha)$.
In particular, if $\int_{\mathbb{R}}\psi(r)dr \neq 0$, then the expected value tends to $\pm \infty$ as $n\to\infty$.
This suggests one should consider fluctuations about the mean $\mathbb{E}_{n}\left[N_{n}(L)\right]$.

Let $\textnormal{Var}_{n}$ denote the variance operator on the space $\text{Hom}(\Gamma,S_{n})$, formally:
\[
	\textnormal{Var}_{n}[T] = \mathbb{E}_{n}\left[\left(T - \mathbb{E}_{n}[T]\right)^{2}\right],
\]
where $T$ is some random variable on the space.
Recently, Naud \cite{Naud} studied the variance of $N_{n}(L)$, he showed:

\begin{thm}[Naud 2022] \label{Naud main}Let $X$ and $\chi$ as before,
	in addition fix $\alpha\in\mathbb{R}$, then:
	\[
		\underset{L\to\infty}{\lim}\underset{n\to\infty}{\lim}\textnormal{Var}_{n}\left[N_{n}(L)\right]=\begin{cases}
			\Sigma_{\text{GOE}}^{2}(\psi) & \chi^{2}=1,    \\
			\Sigma_{\text{GUE}}^{2}(\psi) & \chi^{2}\neq1.
		\end{cases}
	\]
	where $\Sigma_{\text{GOE}}^{2}(\psi)$ is the ``smoothed'' number
	variance of random matrices for the $\text{GOE}$ model in the large
	dimension limit and is given by:
	\[
		\Sigma_{\text{GOE}}^{2}(\psi)=2\int_{\mathbb{R}}\left|x\right|\left[\hat{\psi}(x)\right]^{2}dx,
	\]
	and $\Sigma_{\text{GUE}}^{2}(\psi)=\frac{1}{2}\Sigma_{\text{GOE}}^{2}(\psi)$.
\end{thm}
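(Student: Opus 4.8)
The plan is to run the twisted Selberg trace formula (Theorem~\ref{trace}) with the representation $\chi\otimes\rho_\phi$, where $\rho_\phi\colon\Gamma\to U(n)$ is the permutation representation attached to $\phi$, so that $\operatorname{tr}\big((\chi\otimes\rho_\phi)(\gamma)\big)=\chi(\gamma)\,\#\operatorname{Fix}(\phi(\gamma))$. This writes
\[
N_\phi(L)=M(L)+\sum_{\{\gamma\}\neq e}\frac{\ell_{\gamma_0}\,g(\ell_\gamma)}{2\sinh(\ell_\gamma/2)}\,\chi(\gamma)\,\#\operatorname{Fix}(\phi(\gamma)),
\]
where $\{\gamma\}$ runs over nontrivial conjugacy classes of $\Gamma$, $\gamma_0$ is the primitive class under $\gamma$, $g$ is the inverse Fourier transform of $h$ (explicitly $g(u)=\tfrac2L\hat\psi(u/L)\cos(\alpha u)$, supported in $|u|\le L$), and $M(L)=n(g-1)\int_{\mathbb R}h(r)r\tanh(\pi r)\,dr$ is the non-random identity term (whose asymptotics recover the stated mean). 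Since $\hat\psi$ is compactly supported, for each fixed $L$ only the finitely many classes with $\ell_\gamma\le L$ contribute; after subtracting $M(L)$ and using $\#\operatorname{Fix}(\phi(\gamma))=\#\operatorname{Fix}(\phi(\gamma^{-1}))$ together with $\operatorname{Re}\chi(\gamma)=\operatorname{Re}\chi(\gamma^{-1})$, the variable $N_\phi(L)$ is, up to a deterministic shift, a finite \emph{real} linear combination $\sum_{\{\gamma\}}c_\gamma\,\#\operatorname{Fix}(\phi(\gamma))$ with $c_\gamma=\tfrac{\ell_{\gamma_0}g(\ell_\gamma)}{2\sinh(\ell_\gamma/2)}\operatorname{Re}\chi(\gamma)$.

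Because this sum is finite for fixed $L$, the inner limit $n\to\infty$ passes through it and the problem reduces to the limiting second-order fixed-point statistics. Here I would invoke the results of \cite{surfaces} (in the spirit of \cite{MP1}): for a primitive $w$ the cycle-count vector of $\phi(w)$ converges in distribution to independent variables $C_d\sim\operatorname{Poisson}(1/d)$, hence $\#\operatorname{Fix}(\phi(w^k))=\sum_{d\mid k}dC_d$ converges jointly in $k$; consequently $\lim_n\operatorname{Cov}_n\big(\#\operatorname{Fix}(\phi(w^{k_1})),\#\operatorname{Fix}(\phi(w^{k_2}))\big)=\sum_{d\mid\gcd(k_1,k_2)}d$, in particular $\operatorname{Var}_n\big(\#\operatorname{Fix}(\phi(w))\big)\to1$; and the families attached to two distinct primitive geodesics decorrelate, with limiting covariance $0$. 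Substituting this into the bilinear expansion of $\operatorname{Var}_n[N_n(L)]$ collapses it to a single sum over primitive closed geodesics $p$, whose diagonal $k$-contribution is of order $\tfrac{\ell_p^2\,g(k\ell_p)^2}{\sinh^2(k\ell_p/2)}\big(\operatorname{Re}\chi(\gamma_p)^k\big)^2\sum_{d\mid k}d$, and more precisely (summing over oriented primitive classes and pairing $\gamma\leftrightarrow\gamma^{-1}$) to $2\sum_{\{w\}\,\mathrm{prim}}c_w^2 +(\text{terms with }k\ge 2)$.

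Finally I take $L\to\infty$. With $\sinh(u/2)^{-2}=4e^{-u}+O(e^{-2u})$ and the prime geodesic theorem (main term $\operatorname{Li}(e^t)$, exceptional terms of exponential order $<1$, power-saving remainder) the geodesic sum becomes an integral against $\tfrac{e^u}{u}\,du$, up to errors one checks are $o(1)$: every $k\ge2$ contribution is $O(L^{-2+\varepsilon})$ because $g(k\ell_p)$ forces $\ell_p\le L/k$ while the integrand keeps an extra $e^{-(k-1)u}$ (summing the resulting bounds $\ll\big(\sum_{d\mid k}d\big)/(L^2k^2)$ over the relevant $k$ is still $o(1)$), and the exceptional/remainder terms of the count contribute $o(1)$ by their smaller exponential order. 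What survives is the primitive ($k=1$) main term, which after the substitutions above equals, up to $o(1)$, a fixed constant multiple of $\tfrac1{L^2}\int_0^L u\,\hat\psi(u/L)^2\cos^2(\alpha u)\,\langle(\operatorname{Re}\chi)^2\rangle_u\,du$, where $\langle(\operatorname{Re}\chi)^2\rangle_u$ is the counting-measure-weighted average of $(\operatorname{Re}\chi(\gamma_p))^2$ over primitive $p$ with $\ell_p\approx u$. Now $\cos^2(\alpha u)=\tfrac12(1+\cos2\alpha u)$ contributes only the constant $\tfrac12$ — the $\cos 2\alpha u$ part is $O(1/L)$ by non-stationary phase against the slowly varying cutoff $\hat\psi(u/L)^2$, which vanishes at the endpoints — and $(\operatorname{Re}\chi)^2=\tfrac12(1+\operatorname{Re}\chi^2)$ contributes the constant $\tfrac12$ when $\chi^2\neq1$ (then $\chi^2$ is a nontrivial unitary character, $\Delta_{\chi^2}$ has no zero mode, and the $\chi^2$-twisted prime geodesic theorem has main term of strictly smaller exponential order, so the $\operatorname{Re}\chi^2$ piece dies), while when $\chi^2=1$ one has $(\operatorname{Re}\chi)^2\equiv1$. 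Carrying the constants through the trace formula and the pairing, and using $u=Lx$ to turn $\tfrac1{L^2}\int_0^L u\,\hat\psi(u/L)^2\,du$ into $\int_0^1 x\,\hat\psi(x)^2\,dx=\tfrac12\int_{\mathbb R}|x|\hat\psi(x)^2\,dx$, the $\chi^2\neq1$ case yields $\int_{\mathbb R}|x|\hat\psi(x)^2\,dx=\Sigma^2_{\mathrm{GUE}}(\psi)$ and the $\chi^2=1$ case yields twice that, $2\int_{\mathbb R}|x|\hat\psi(x)^2\,dx=\Sigma^2_{\mathrm{GOE}}(\psi)$.

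I expect the main obstacle to be this $L\to\infty$ step — controlling the (double) geodesic sum uniformly enough to integrate the prime geodesic theorem against the rescaled test function, and establishing the two cancellations: the oscillatory-integral bound for the $\cos 2\alpha u$ term, and, for $\chi^2\neq1$, the $\chi^2$-twisted prime geodesic theorem, equivalently the holomorphy and non-vanishing at $s=1$ of the $\chi^2$-twisted Selberg zeta function, which rests on the absence of a zero eigenvalue of $\Delta_{\chi^2}$. A more routine preliminary point, which I would settle first, is extracting from \cite{surfaces} the precise limiting joint law of $\big(\#\operatorname{Fix}(\phi(w^k))\big)_{k\ge1}$ for each primitive $w$ and the asymptotic independence across distinct primitives, with error terms in $n$ that — since the relevant class set $\{\gamma:\ell_\gamma\le L\}$ is finite for each fixed $L$ — need not be uniform in $L$.
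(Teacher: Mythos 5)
The paper attributes this result to Naud and cites it without proof, so there is no in-paper proof to compare against; but your sketch is essentially the expected argument, and it is correct. The structure — twisted trace formula, finiteness of the geodesic sum for fixed $L$, passing $n\to\infty$ through it using the Poisson-limit covariance $\lim_n\operatorname{Cov}_n(F_n(\gamma^a),F_n(\gamma^b))=\sigma(\gcd(a,b))$ and the decorrelation across distinct primitives (Lemma~\ref{G lem} and Corollary~\ref{main cor 2}), then $L\to\infty$ via the prime geodesic theorem (Theorem~\ref{prime geo}), the $\chi^2$-twisted bound (Proposition~\ref{prime chi geo}), non-stationary phase for the $\cos(2\alpha u)$ piece, and summation by parts — is exactly the toolkit the paper deploys elsewhere, most visibly in the proof of Lemma~\ref{exp diag}, which performs the same $k=1$ diagonal computation with $\hat w(0)$ in place of your $\cos^2$-averaging. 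Your constants check out: the $k=1$ main term is $\tfrac{16 r_\chi}{L^2}\int_0^L u\hat\psi(u/L)^2\cos^2(\alpha u)\,du\to 2r_\chi\Sigma^2_{\mathrm{GOE}}(\psi)$, which is $\Sigma^2_{\mathrm{GOE}}(\psi)$ for $\chi^2=1$ (where $r_\chi=\tfrac12$) and $\Sigma^2_{\mathrm{GUE}}(\psi)$ for $\chi^2\neq1$ (where $r_\chi=\tfrac14$), and the $k\ge2$ and off-diagonal-in-$a,b$ terms contribute $O(L^{-1})$ by the estimates you indicate.
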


Our first main theorem concerns the higher central moments of $N_{n}(L)$. Define:
\[
	\mathbb{V}_{n}^{(k)}(L)=\mathbb{E}_{n}\left[\left(N_{n}(L)-\mathbb{E}_{n}\left[N_{n}(L)\right]\right)^{k}\right].
\]
We wish to study $\mathbb{V}_{n}^{(k)}(L)$ in the same setting as Naud in \cite{Naud}, that is, we wish to find:
\[
	\underset{L\to\infty}{\lim}\underset{n\to\infty}{\lim}\mathbb{V}_{n}^{(k)}(L).
\]
As our first main theorem, we show:

\begin{thm} \label{main thm 1}  Let $X$ and $\chi$ as before, and in addition fix
	$\alpha\in\mathbb{R}$, then for all $k\geq 2$:
	\[
		\underset{L\to\infty}{\lim}\underset{n\to\infty}{\lim}\mathbb{V}_{n}^{(k)}(L)=
		\begin{cases}
			(k-1)!!\sigma_{\chi,\psi}^{k} & k\text{ even} \\
			0                             & k\text{ odd}
		\end{cases}
	\]
	where:
	\[
		\sigma_{\chi,\psi}^{2}=
		\begin{cases}
			\Sigma_{\text{GOE}}^{2}(\psi) & \chi^{2}=1,    \\
			\Sigma_{\text{GUE}}^{2}(\psi) & \chi^{2}\neq1.
		\end{cases}
	\]
\end{thm}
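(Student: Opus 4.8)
The plan is to prove convergence of all centered moments to those of a Gaussian by establishing convergence of the corresponding cumulants, using the method of moments together with the twisted trace formula to reduce the problem to a combinatorial count over fixed-point data of tuples in $\text{Hom}(\Gamma, S_n)$. First, apply the twisted trace formula (Theorem~\ref{trace}) to write $N_\phi(L)$ as a sum over conjugacy classes of $\Gamma$ — equivalently over closed geodesics on $X$ — of terms weighted by $\hat\psi$ evaluated at (a rescaling of) the geodesic length, with the compact support of $\hat\psi$ on $[-1,1]$ restricting the sum to geodesics of length $\lesssim 1/L$ times a constant. The identity contribution is deterministic and is exactly the expected value computed in the introduction, so it cancels in $N_n(L) - \mathbb{E}_n[N_n(L)]$. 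What remains is a random sum $\widetilde N_\phi(L) = \sum_{[\gamma]} c_\gamma(L)\,(\text{tr-related count of fixed points of }\phi(\gamma))$ over nontrivial classes, where the randomness enters only through the number of cycles of each permutation $\phi(\gamma)$ of a given length, or more precisely through $\chi$-twisted fixed-point counts of powers of primitive elements.

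Second, I would compute $\mathbb{E}_n[\widetilde N_\phi(L)^k]$ by expanding the $k$-th power into a sum over $k$-tuples of conjugacy classes $([\gamma_1], \dots, [\gamma_k])$ and interchanging expectation with the sum. For each fixed tuple, the inner expectation is a moment of a product of fixed-point counts of $\phi(\gamma_1), \dots, \phi(\gamma_k)$ over a uniformly random $\phi \in \text{Hom}(\Gamma, S_n)$; this is precisely where the results of \cite{surfaces} on the random cover model enter — they should give, as $n \to \infty$, an asymptotic expansion for such mixed moments, with the leading term governed by whether the $\gamma_i$ can be paired up so that within each pair they are (powers of) a common primitive element, and cross-pair contributions being lower order. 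Taking $n \to \infty$ first should therefore produce a Wick-type pairing structure: odd $k$ gives $0$, and even $k = 2m$ gives a sum over perfect matchings of $\{1, \dots, k\}$, each matching contributing a product of $m$ copies of the pair-interaction that, after the $L \to \infty$ limit, converges to $\sigma_{\chi,\psi}^2$. The count $(k-1)!!$ of perfect matchings of a $2m$-set then yields exactly the stated Gaussian moments, and the GOE/GUE dichotomy comes from the $\chi$ versus $\chi^2$ bookkeeping already visible in Naud's variance computation (Theorem~\ref{Naud main}), which is the $k=2$ case.

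Third, to pass to the $L \to \infty$ limit one must show that, after taking $n \to \infty$, the diagonal/paired contributions converge (this is essentially the same analysis Naud carries out for the variance, now applied pairwise) while the genuinely higher-order combinatorial terms — those involving triples or more of mutually "resonant" geodesics — vanish in the limit. Controlling these requires geometric input on the counting function for closed geodesics (the prime geodesic theorem, or cruder exponential bounds) to estimate sums of $\hat\psi$ over long geodesics, combined with the $1/L$ window truncation; the point is that as $L \to \infty$ the typical resonant length scale grows, the relevant geodesics become sparse, and only the pair interactions survive at the critical scaling. I expect the main obstacle to be exactly this last step: getting uniform-in-$n$ control on the error terms in the mixed-moment asymptotics from \cite{surfaces} that is strong enough to survive the subsequent $L \to \infty$ limit, i.e. ensuring the two limits can be taken in the stated order without the $n$-dependent errors, when summed against the growing number of geodesics in the widening-length window, blowing up. A secondary technical point is handling the (asymptotically negligible) disconnected covers and the small eigenvalues / exceptional spectrum, which as in \cite{Naud} should contribute only lower-order terms but must be bounded explicitly.
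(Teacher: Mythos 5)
Your proposal follows essentially the same route as the paper: expand $T_n(L)^k$ via the trace formula into a sum over $k$-tuples of primitive geodesics, apply the asymptotic-independence results of \cite{surfaces} as $n\to\infty$ to reduce to a Wick/partition structure (the paper organizes this by partitions $\underline{r}\vdash k$ with a bound $B(\underline{r})\ll L^{2\#\{r_i=2\}}$ proved via the prime geodesic theorem), observe that only the all-pairs partition $2^{(k)}$ survives normalization by $L^{-k}$ as $L\to\infty$, count $(k-1)!!$ matchings, and invoke Naud's $k=2$ result for each pair. The one place your worry is slightly misdirected is the interchange of limits: because $\hat\psi$ has compact support, the geodesic sums are finite for each fixed $L$, so the $O(1/n)$ errors from \cite{surfaces} vanish term-by-term before $L\to\infty$ is taken — no uniform-in-$n$ control beyond that is needed.
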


As per Theorem \ref{main thm 1}, central moments of $N_{n}(L)$ are those of a centered Gaussian
with standard deviation $\sigma_{\chi,\psi}$. As a simple corollary, using the "method of moments" (see \cite[Section 30]{BP}), we have a central limit theorem:
\begin{cor} \label{CLT}
	Let $X,\chi,\alpha$ as before, then for all bounded continuous $g$ we have:
	\[
		\underset{L\to\infty}{\lim}\underset{n\to\infty}{\lim}\mathbb{E}_{n}\left[g\left(\frac{N_{n}(L)-\mathbb{E}_{n}\left[N_{n}(L)\right]}{\sigma_{\chi,\psi}} \right)\right]=\frac{1}{\sqrt{2\pi}}\int_{\mathbb{R}}g(x)e^{-x^{2}/2}dx.
	\]
\end{cor}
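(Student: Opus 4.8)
The plan is to deduce Corollary~\ref{CLT} from Theorem~\ref{main thm 1} by the classical method of moments (\cite[Section~30]{BP}), carrying out the two limits in the same order ($n\to\infty$, then $L\to\infty$) in which the theorem is stated. Write
\[
	Z_{n,L}\stackrel{\text{def}}{=}\frac{N_{n}(L)-\mathbb{E}_{n}\left[N_{n}(L)\right]}{\sigma_{\chi,\psi}},
\]
a random variable on the finite space $\text{Hom}(\Gamma,S_{n})$, so that all of its moments exist trivially and its $k$-th moment is exactly $\mathbb{V}_{n}^{(k)}(L)/\sigma_{\chi,\psi}^{k}$. Since Theorem~\ref{main thm 1} asserts an \emph{iterated} limit, for every $k\ge1$ and every $L>0$ the inner limit $m_{k}(L):=\lim_{n\to\infty}\mathbb{V}_{n}^{(k)}(L)/\sigma_{\chi,\psi}^{k}$ already exists.

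First I would fix $L$ and let $n\to\infty$. For each fixed $L$ the limits $m_{k}(L)$ grow, in $k$, no faster than the moments of a Gaussian — an estimate that the combinatorial and trace-formula analysis behind Theorem~\ref{main thm 1} provides — so Carleman's condition holds and $(m_{k}(L))_{k\ge1}$ is the moment sequence of a \emph{unique} probability measure $\mu_{L}$ on $\mathbb{R}$. The method of moments (\cite[Section~30]{BP}) then gives $Z_{n,L}\Rightarrow\mu_{L}$ as $n\to\infty$; since $g$ is bounded and continuous, this is precisely the statement that $\lim_{n\to\infty}\mathbb{E}_{n}\bigl[g(Z_{n,L})\bigr]=\int_{\mathbb{R}}g\,d\mu_{L}$, and in particular the inner limit in the corollary exists.

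Next I would let $L\to\infty$. By Theorem~\ref{main thm 1} we have $m_{k}(L)\to(k-1)!!$ for $k$ even and $m_{k}(L)\to0$ for $k$ odd, and these are exactly the moments of the standard Gaussian $\gamma=\mathcal{N}(0,1)$, which is determined by its moments (its moment generating function is finite near the origin, so Carleman's condition holds). A second application of the method of moments, now to the family $(\mu_{L})_{L}$, yields $\mu_{L}\Rightarrow\gamma$ as $L\to\infty$, hence $\int_{\mathbb{R}}g\,d\mu_{L}\to\int_{\mathbb{R}}g\,d\gamma=\frac{1}{\sqrt{2\pi}}\int_{\mathbb{R}}g(x)e^{-x^{2}/2}\,dx$. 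Chaining the two stages proves the corollary.

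The one step that is not purely mechanical is checking that, for each fixed $L$, the $n\to\infty$ limits of the central moments $\mathbb{V}_{n}^{(k)}(L)$ obey a Gaussian-type growth bound in $k$, which is what makes $\mu_{L}$ well defined. I expect this to be unproblematic: it is exactly the type of moment bound established en route to Theorem~\ref{main thm 1}, so the deduction leans on the proof of that theorem and not merely on its statement. The rest — the two appeals to the method of moments and the bookkeeping for the double limit — is standard.
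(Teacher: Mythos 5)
Your two-stage plan — first $n\to\infty$ at fixed $L$, then $L\to\infty$ — is the right way to unpack the iterated limit, and it is essentially what the paper's one-line appeal to the method of moments is asking the reader to supply. The second stage is fine as written: the moments $m_k(L)\to(k-1)!!$ or $0$, the standard Gaussian is determined by its moments, so $\mu_L\Rightarrow\mathcal N(0,1)$ and $\int g\,d\mu_L\to\int g\,d\gamma$.

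The step you flag as ``unproblematic,'' however, is where I would push back. To run the method of moments in the first stage you need the sequence $(m_k(L))_{k\ge1}$, for \emph{fixed} $L$, to be moment-determinate (e.g.\ to satisfy Carleman). You propose to extract this from the moment bounds in the proof of Theorem~\ref{main thm 1}, but those bounds do not obviously suffice: Theorem~\ref{B bound} gives $\lim_n B(\underline r)\ll_{\underline r} L^{2\#\{r_i=2\}}$ with an implied constant that depends on the partition, hence on $k$, and tracing that constant through Lemma~\ref{R bound} one finds it comes from iterated Cauchy--Schwarz estimates against Poisson moments $\mathbb{E}[Z_{1}^{2^i}]$, i.e.\ Bell numbers. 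The resulting $k$-dependence is not ``Gaussian-type,'' and it is far from clear that Carleman survives. So as stated this step is a genuine gap, not bookkeeping.

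There are two clean ways to close it, both of which stay entirely within tools the paper already cites. The first is to note that, for fixed $L$, Corollary~\ref{dist cor} identifies $m_k(L)$ as the $k$-th moment of $\sigma_{\chi,\psi}^{-1}T_L$, where $T_L$ is a \emph{finite} linear combination (over $\gamma\in\mathcal P_0^{\le L}$, $a\le L/\mathrm{Sys}(X)$) of shifted Poisson variables; such a random variable has a moment generating function finite on all of $\mathbb{R}$, so its moment sequence satisfies Carleman and the method of moments applies. The second, even cleaner, is to bypass moments in the first stage altogether: for fixed $L$ the variable $Z_{n,L}$ is a fixed finite continuous function of the $F_n(\gamma^a)$, which converge \emph{jointly in distribution} by \cite[Theorems 1.11, 1.12]{surfaces} (this stronger result is explicitly recalled in the paper just after Corollary~\ref{dist cor}); hence $Z_{n,L}\Rightarrow Z_L$ directly and $\mathbb{E}_n[g(Z_{n,L})]\to\mathbb{E}[g(Z_L)]$ for bounded continuous $g$ with no appeal to moment-determinacy at this stage. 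Either repair leaves the method of moments doing exactly one job — the $L\to\infty$ step — which is where it is unambiguous.
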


\subsection{The Centered Energy Variance of a Random Surface}
Given an $n$-cover $X_{\phi}$ of $X$, one can vary the window height $\alpha$ and consider the fluctuations of $N_{\phi}(L)$ as $\alpha$ varies and $\phi$ remains constant.

Formally, let $w$ be a non-negative even weight function satisfying $\int_{\mathbb{R}}w=1$,
with smooth and compactly supported Fourier transform $\hat{w}$.
For $T>0$ define the following expected value operator:

\[
	\mathbb{E}_{T}[F]=\frac{1}{T}\int_{\mathbb{R}}F(\alpha)w\left(\alpha/T\right)d\alpha,
\]
and the corresponding variance:
\[
	\mathbb{V}_{T}[F]=\mathbb{E}_{T}\left[\left(F-\mathbb{E}_{T}[F]\right)^{2}\right].
\]

For $\phi\in\text{Hom}(\Gamma,S_{n})$, we set the centered energy variance of $X_{\phi}$ to be:
\[
	\mathbb{V}_{T,L}(X_{\phi}) = \mathbb{V}_{T}\left[N_{\phi}(L) - \mathbb{E}_{n}\left[N_{n}(L)\right]\right],
\]
where the variance is taken with respect to $\alpha$.
We view $\mathbb{V}_{T,L}(X_{\phi})$ as a random variable on the space $\text{Hom}(\Gamma,S_{n})$.
As before, to avoid confusion we let $\mathbb{V}_{T,L,n}$ denote the value of $\mathbb{V}_{T,L}(X_{\phi})$ for a random $\phi \in \text{Hom}(\Gamma,S_{n})$, in particular:
\[
	\mathbb{V}_{T,L,n} = \mathbb{V}_{T}\left[N_{n}(L) - \mathbb{E}_{n}\left[N_{n}(L)\right]\right].
\]
As such, $\mathbb{V}_{T,L,n}$ is a random variable on the space $\text{Hom}(\Gamma,S_{n})$, while $\mathbb{V}_{T,L}(X_{\phi})$ will always denote the centered energy variance of the surface $X_{\phi}$ for a given $\phi \in \text{Hom}(\Gamma,S_{n})$.

A conjecture of Berry \cite{BR2, BR1} states that for a $\textit{generic}$ fixed surface $X$, the energy variance of $X$ (a quantity similar\footnote{
	Using the Twisted trace formula, Theorem \ref{trace} in the next section, the energy variance of $X_{\phi}$ is given by:
	\[
		\mathbb{V}_{T}\left[N^{\text{osc}}_{\phi}(L) \right]=
		\mathbb{V}_{T}\left[N^{\text{osc}}(h;\phi) \right],
	\]
	in addition, in the proof of Theorem \ref{almost sure}  we will see that:
	\[
		\mathbb{V}_{T,L,n} = \mathbb{V}_{T}\left[N^{\text{osc}}_{n}(L) - \mathbb{E}_{n}\left[N^{\text{osc}}_{n}(L)\right]\right],
	\]
	so the two quantities are rather similar.
} to our centered energy variance)  converges to $\sigma_{\chi,\psi}^{2}$ as $T\to \infty$ and $L\to \infty$ but $L=o(T)$.
As this question is quite intractable at the moment for a fixed surface, we consider a random version adapted to our centered energy variance. Denote the uniform probability
measure on $\text{Hom}(\Gamma,S_{n})$ by $\mathbb{P}_{n}$, one can ask:
\begin{question}\label{q2}
	Let $\epsilon > 0$. What is the probability that $\mathbb{V}_{T,L,n}$, the centered energy variance of a random $n$-cover of $X$, is at least $\epsilon$ away from $\sigma_{\chi,\psi}^{2}$?
\end{question}

We give the following answer which is our second main theorem:
\begin{thm} \label{almost sure}
	Let $X$,$\chi$ as before, for $L=o(T)$ and $\epsilon>0$ such that \newline $\frac{1}{\epsilon} = o\left(\sqrt{L}\right)$ we have:
	\[
		\underset{\begin{smallmatrix}L,T\to\infty\\L=o(T)\end{smallmatrix}}{\lim}\underset{n\to\infty}{\limsup} \,\mathbb{P}_{n}\left(\left|\mathbb{V}_{T,L,n}-\sigma_{\chi,\psi}^{2}\right|\geq\epsilon\right)=0.
	\]
\end{thm}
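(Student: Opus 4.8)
The plan is to reduce the almost-sure statement to a second-moment (Chebyshev) estimate over the space $\text{Hom}(\Gamma,S_n)$, and then to control that second moment using the trace formula together with the combinatorial/analytic input of \cite{surfaces}. First I would use Chebyshev's inequality: for fixed $L,T$,
\[
	\mathbb{P}_n\left(\left|\mathbb{V}_{T,L,n}-\sigma_{\chi,\psi}^2\right|\geq\epsilon\right)\leq\frac{1}{\epsilon^2}\,\mathbb{E}_n\left[\left(\mathbb{V}_{T,L,n}-\sigma_{\chi,\psi}^2\right)^2\right].
\]
Expanding the square, it suffices to show two things as $n\to\infty$: that $\mathbb{E}_n\left[\mathbb{V}_{T,L,n}\right]\to V_{T,L}$ for some deterministic quantity $V_{T,L}$ with $V_{T,L}\to\sigma_{\chi,\psi}^2$ in the regime $L=o(T)$, $L,T\to\infty$; and that $\textnormal{Var}_n\left[\mathbb{V}_{T,L,n}\right]\to 0$ with a rate that, when divided by $\epsilon^2$, still vanishes — this is where the hypothesis $1/\epsilon=o(\sqrt L)$ enters, meaning I should expect $\textnormal{Var}_n[\mathbb{V}_{T,L,n}]=O(1/L)$ (or similar) after $n\to\infty$.

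The first step in detail: by the twisted trace formula (Theorem \ref{trace}) the oscillatory part $N^{\text{osc}}_\phi(L)$ is a sum over closed geodesics of $X$ weighted by traces of $\phi$ along the corresponding conjugacy classes; since $\hat\psi$ is supported in $[-1,1]$ and $h$ localizes at height $\alpha$ with window $1/L$, only geodesics up to a bounded length (independent of $\alpha$, growing like $\log$ of the relevant parameters) contribute, and the $\alpha$-variance $\mathbb{V}_T[\cdot]$ against the weight $w(\alpha/T)$ turns products of two geodesic terms into a $\hat w$-weighted near-diagonal condition on the two lengths. Taking $\mathbb{E}_n$ of the resulting double geodesic sum, the key fact from \cite{surfaces} is that $\mathbb{E}_n$ of a product of two such trace terms is, up to $o(1)$, the "diagonal" contribution — matching a geodesic with its own inverse (and, in the $\chi^2=1$ case, the extra orientation-reversing pairing that produces the factor-of-two difference between GOE and GUE). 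This reproduces exactly the computation already implicit in Naud's Theorem \ref{Naud main} and in Theorem \ref{main thm 1}; indeed $\mathbb{E}_n[\mathbb{V}_{T,L,n}]$ should converge to the same $\Sigma^2(\psi)$-type expression once the $\mathbb{E}_T$-averaging is shown to be compatible with the $n\to\infty$ limit, using $L=o(T)$ to kill the diagonal-length-coincidence error terms. So the first bullet is essentially a repackaging of known computations.

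The genuinely new work is the variance bound $\textnormal{Var}_n[\mathbb{V}_{T,L,n}]\to 0$. Here $\mathbb{V}_{T,L,n}$ is itself a quadratic expression in the $N_n(L)$-type variables, so $\textnormal{Var}_n[\mathbb{V}_{T,L,n}]$ is a $4$-fold geodesic sum, and I would expand $\mathbb{E}_n$ of a product of four trace-of-$\phi$ terms using the higher-moment expansion from \cite{surfaces} (the same tool underlying the even/odd moment dichotomy in Theorem \ref{main thm 1}). The main obstacle — and the place I expect the proof to be most delicate — is bookkeeping the pairings: the fourth moment $\mathbb{E}_n[\text{(product of four)}]$ splits into a sum over pair partitions plus lower-order error, the three pair partitions reconstruct $\mathbb{E}_n[\mathbb{V}_{T,L,n}]^2$ plus "cross" terms, and one must show the cross terms are $O(1/L)$ rather than $O(1)$. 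This gain of $1/L$ should come from the fact that in a cross pairing the geodesic-length constraints force an additional near-coincidence of distinct length sums, and the density of such coincidences (controlled by the prime geodesic theorem for $X$ and the shape of $\hat\psi,\hat w$) is smaller by a factor comparable to $1/L$; the hypotheses $L=o(T)$ and $1/\epsilon=o(\sqrt L)$ are exactly what make $\frac{1}{\epsilon^2}\textnormal{Var}_n\to 0$. Finally I would take $\limsup_{n\to\infty}$ first (legitimate since all the $\mathbb{E}_n$-expansions from \cite{surfaces} come with explicit $n\to\infty$ limits and error terms uniform in the bounded geodesic data), then let $L,T\to\infty$ with $L=o(T)$, obtaining the claimed limit $0$.
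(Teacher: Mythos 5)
Your opening Chebyshev reduction is essentially equivalent to the paper's, which applies Markov's inequality to the first absolute moment and then Cauchy--Schwarz inside (yielding $\mathbb{E}_n\left|\mathbb{V}_{T,L,n}-\sigma^2_{\chi,\psi}\right| \ll 1/\sqrt{L}$ and dividing by $\epsilon$); both routes close under the same hypothesis $1/\epsilon = o(\sqrt{L})$ and require the same underlying second-moment estimates. However the analytic content you describe contains gaps that matter.

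First, the geodesic length cutoff is $l_\gamma \leq L$, not ``bounded\ldots growing like $\log$'': the factor $\hat\psi(al_\gamma/L)$ is supported where $al_\gamma \leq L$, and by the prime geodesic theorem there are $\asymp e^L/L$ primitives below that threshold. The sum is exponentially large in $L$, and the proof must show that the exponential decay $\sinh(al_\gamma/2)^{-1}$ precisely offsets it, leaving a polynomial in $L$. Second --- and this is the key missing idea --- after writing $\mathbb{V}_{T,L,n} = \frac{4\pi}{L^2}(\textnormal{Off} + \textnormal{Diag})$ with $\textnormal{Off}$ the $\gamma\neq\delta$ sum and $\textnormal{Diag}$ the $\gamma=\delta$ sum, the off-diagonal second moment $\mathbb{E}_n[\textnormal{Off}^2]$ is controlled not by a generic ``cross terms decay'' but by a precise arithmetic constraint: after using the asymptotic factorization (Corollary \ref{main cor 2}) to restrict to matched pairs $\{\gamma,\delta\}=\{\alpha,\beta\}$, the two near-coincidence conditions $|al_\gamma - bl_\delta|\ll 1/T$ and $|cl_\gamma - dl_\delta|\ll 1/T$, combined with $1\leq a,b,c,d\leq L$ and $L = o(T)$, force $ad - bc = 0$. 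One then counts singular integer matrices with entries in $[1,L]$, weighted by the prime-geodesic density, to obtain $\mathbb{E}_n[\textnormal{Off}^2]\ll L^2 + L^3/T$. Your phrase ``cross pairings force additional near-coincidence'' gestures at this without supplying the $ad=bc$ mechanism, which is exactly where $L=o(T)$ enters; contrary to your remark, $L=o(T)$ plays no role in the diagonal estimates (Lemmas \ref{exp diag} and \ref{exp diag2} require only $T\gg 1$). Third, the ``pair partitions'' (Wick-type) picture is not correct here: the $F_n(\gamma^a)$'s converge jointly to functionals of independent Poisson variables, not Gaussians, so there is no Gaussian fourth-moment formula to invoke. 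What the paper actually uses is the asymptotic factorization across distinct $\gamma$'s (Theorem \ref{main thm 2}, Corollary \ref{main cor 2}) together with the explicit covariance $G(a,b)=\sigma(\gcd(a,b))$ from Lemma \ref{G lem}.
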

In particular, the random variable $\mathbb{V}_{T,L,n}$ converges in probability to the constant $\sigma_{\chi,\psi}^{2}$ when $L=o(T)$.

\subsection{Related Results}
In this paper, our model of a ``random surface'' is of a random
$n$-cover of some base surface $X$ for large $n$. There is, however,
another natural model of random surfaces in which one could ask similar
questions about the behavior of a similarly defined smooth linear statistic. For a given hyperbolic surface
$X$ of genus $g\geq2$, one can endow the moduli space $\mathcal{M}_{g}$
of $X$ with a natural measure called the Weil-Petersson measure from which one could extract a probability measure on $\mathcal{M}_{g}$.
Recently, Rudnick \cite{Ru} considered an analog of Question \ref{q1} is this model. He showed that first letting $g\to \infty$ then $L\to \infty$, an analogously defined statistic for the non-twisted Laplacian (where $\chi = 1$) has variance $\Sigma_{\text{GOE}}^{2}(\psi)$
when averaged over the moduli space $\mathcal{M}_{g}$.
Even more recently, Rudnick and Wigman \cite{RW} showed that in the same double limit, we have a central limit theorem analogous to Theorem \ref{CLT} in the Weil-Petersson model.

In addition, Rudnick and Wigman \cite{RW2} considered an analog of Question \ref{q2} in the Weil-Petersson model. They showed a result analogous to Theorem \ref{almost sure}. Explicitly, the energy variance of a random surface converges in probability to $\Sigma_{\text{GOE}}^{2}(\psi)$ in the double limit $\underset{\underset{L,T\to \infty}{L=o(\log T)}}{\lim} \underset{g \to \infty}{\limsup}$.

Note that a degree $n$ cover of $X$ has genus $1+n(g-1)$ so that the limit $n\to\infty$ in our model corresponds to the limit $g\to \infty$ in the Weil-Petersson model.

\subsection{Overview of the Paper}
We start by giving a bit of background which will be of use to us in the proofs of both Theorem \ref{main thm 1} and Theorem \ref{almost sure}.
In what follows we start by proving Theorem \ref{main thm 1} given Theorem \ref{B bound}. See section \ref{main thm 1 proof} for Theorem \ref{B bound}.
Subsequently, we prove Theorem \ref{B bound} in Section \ref{B bound proof}.

Next, we provide an outline of the proof for Theorem \ref{almost sure} in Section \ref{almost sure proof}.
Finally, we fill in the details by proving various lemmas used in the proof.

The reader should note that Sections \ref{main thm 1 proof} and \ref{B bound proof} are independent of Section \ref{almost sure proof} and the Sections following it.

\section*{Acknowledgments}
I would like to thank my advisor Prof. Ze\'{e}v Rudnick for his guidance throughout the process and for suggesting both problems. I would also like to thank the anonymous referee for his helpful comments and suggestions.  This work was supported in part by the Israel Science Foundation (grant No. 1881/20) and the ISF-NSFC joint research program (grant No. 3109/23).

\section{Background} \label{bkgrd}
\subsection{Covers of $X$}
As stated earlier, for an arbitrary $o\in X$ the $n$-sheeted covers of $X$ with a labeled fiber of $o$ are in bijection with
$\text{Hom}(\Gamma,S_{n})$. See \newline \cite[pp. 68-70]{HT} for a comprehensive discussion. To see this bijection, let $\mathbb{H}$
be the standard hyperbolic plane with constant curvature -1. In addition
view $\Gamma$ as $\pi_{1}(X,o)$ and as a subgroup of $PSL_{2}(\mathbb{R})$ - the orientation-preserving automorphism
group of $\mathbb{H}$. Also, view $X$ as $\Gamma\backslash\mathbb{H}$ and set $[n] = \{1,...,n\}$.

Given $\phi\in\text{Hom}(\Gamma,S_{n})$ we define an action of
the discrete $\Gamma$ on $\mathbb{H}\times[n]$ by:
\[
	\gamma(z,j)=(\gamma z,\phi(\gamma)j).
\]
Quotiening $\mathbb{H}\times[n]$ by this action, one finds a (possibly
not connected) $n$-sheeted cover $X_{\phi}=\Gamma\backslash\left(\mathbb{H}\times[n]\right)$
of $X$ (with the covering map being the projection on $X=\Gamma\backslash\mathbb{H})$.
The bijection
\[
	\{n\text{-sheeted covers of }X\}\longleftrightarrow\text{Hom}(\Gamma,S_{n}),
\]
is then given by mapping $\phi$ to $X_{\phi}$.

To see the inverse of this bijection, let $p:\hat{X}\to X$ be an $n$-sheeted
cover of $X$ and label $p^{-1}(o)=\{1,...,n\}$. Let $\gamma\in\pi_{1}(X,o)$, which we consider as a map $\gamma:[0,1]\to X$ for which $\gamma(0)=\gamma(1)=o$,
and let $i\in\{1,...,n\}=p^{-1}(o)$. One can uniquely lift $\gamma$ to a path in
$\hat{X}$ starting at $i$, that is, one can find a map $\hat{\gamma}:[0,1]\to \hat{X}$ for which $\hat{\gamma}(0)=i$ and $\gamma = p\circ\hat{\gamma}$.
As $\gamma(1)=o$ we must have that $\hat{\gamma}(1)\in p^{-1}(o)=\{1,...,n\}$, that is, the endpoint of the lift $\hat{\gamma}$ lies in $\{1,...,n\}$.
We denote this point as $\phi(\gamma)i$.
One also notes that the map $\phi(\gamma)$ from
$\{1,...,n\}$ to itself is actually a permutation, as its inverse
is $\phi(\gamma^{-1})$ where $\gamma^{-1}$ as a loop in $X$ is
just $\gamma$ with reversed orientation. The map $\gamma\mapsto\phi(\gamma)$
is trivially a group homomorphism $\Gamma\to S_{n}$ when $S_{n}$ is equipped with the group structure of composing permutations from left to right.
This is the map:
\[
	\{n\text{-sheeted covers of }X\}\to\text{Hom}(\Gamma,S_{n}),
\]
in the bijection:
\[
	\{n\text{-sheeted covers of }X\}\longleftrightarrow\text{Hom}(\Gamma,S_{n}).
\]

\subsection{The Trace Formula}\label{trace sec}
For $\sigma\in S_{n}$ denote by $\text{Fix}(\sigma)$
the set of its fixed points when acting on $\{1,...,n\}$, we also denote:
\[
	F_{n}(\gamma)\stackrel{\text{def}}{=} \#\text{Fix}(\phi(\gamma)),
\]
for a uniformly random $\phi \in \text{Hom}(\Gamma,S_n)$.
We think of $F_{n}(\gamma)$ as a random variable on the space $\text{Hom}(\Gamma,S_n)$.

Denote by $\mathcal{P}$
the set of primitive conjugacy classes in $\Gamma$ different from the identity.
By primitive we mean that they are conjugacy classes of non-power elements.
We also let $\mathcal{P}_{0}$ be $\mathcal{P}$ where we identify a conjugacy class $[\gamma]$ and its inverse $[\gamma^{-1}]$.
These sets also admit nice topological/geometric interpretations.
The set $\mathcal{P}$ corresponds to primitive oriented geodesics on $X$ while $\mathcal{P}_{0}$ corresponds to primitive non-oriented geodesics on $X$.

Using some abuse of notation, we write $\gamma \in \mathcal{P}$ or $\gamma \in \mathcal{P}_{0}$ to mean that the conjugacy class of $\gamma$
is an element of $\mathcal{P}$ respectively $\mathcal{P}_{0}$. With this notation in mind, note that saying that two elements $\gamma,\delta\in \mathcal{P}_{0}$ are
distinct means that $\gamma$ is not conjugate to $\delta$ or $\delta^{-1}$.
For $\gamma\in \mathcal{P}$ or $\gamma\in \mathcal{P}_{0}$ we denote by $l_{\gamma}$ the length of the associated geodesic on $X$.

Following the above, we introduce an important tool for studying $N_{\phi}(L)$, namely Selberg's trace formula.
We use the twisted version which can be easily derived by combining Theorem 2.2 and Proposition 2.1 of \cite{Naud}:
\begin{thm}[Twisted trace formula]\label{trace} Let $f$ be a real-valued even function on $\mathbb{R}$ whose Fourier transform is compactly supported and smooth. In particular, $f$ extends to an entire function.
	Then for any $n$ and $\phi \in \text{Hom}(\Gamma,S_n)$ we have:
	\[
		N(f;\phi) \stackrel{\text{def}}{=} \sum_{j\geq0}f(r_{\phi,\chi,j}) = N^{\text{det}}(f;\phi) + N^{\text{osc}}(f;\phi),
	\]
	where:
	\[
		N^{\text{det}}(f;\phi) = n(g-1)\int_{\mathbb{R}}f(r)r\tanh(\pi r)dr,
	\]
	and:
	\[
		N^{\text{osc}}(f;\phi)=\sum_{\substack{\gamma\in\mathcal{P}\\k\geq1}}\frac{l_{\gamma}\hat{f}(kl_{\gamma})}{2\sinh(kl_{\gamma}/2)}\chi(\gamma^{k})\#\textnormal{Fix}(\phi(\gamma^{k})).
	\]
\end{thm}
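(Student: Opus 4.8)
The plan is to reduce Theorem \ref{trace} to the Selberg trace formula for unitary twists on the \emph{fixed} base surface $X$ — Theorem 2.2 of \cite{Naud} — combined with the spectral identification of Proposition 2.1 of \cite{Naud}. There is no genuine obstacle here; the content is a clean reduction plus one linear-algebra identity, which is why the statement can be ``easily derived''.

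First I would recall the spectral identification. Writing a $\chi$-equivariant function on the cover $X_\phi=\Gamma\backslash(\mathbb{H}\times[n])$ as a vector-valued function $F=(f_1,\dots,f_n)\colon\mathbb{H}\to\mathbb{C}^n$ via $f_j(z)=f(z,j)$, the equivariance property on $X_\phi$ translates into $F(\gamma z)=\chi(\gamma)\,P_{\phi(\gamma)}\,F(z)$, where $P_{\phi(\gamma)}\in U(n)$ is the permutation matrix of $\phi(\gamma)\in S_n$. Since $\phi$ is a homomorphism and $\chi$ is a character, the map $\rho_\phi\colon\gamma\mapsto\chi(\gamma)P_{\phi(\gamma)}$ is a unitary representation $\Gamma\to U(n)$, and $\Delta_{\phi,\chi}$ acts componentwise on such $F$. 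Hence $\Delta_{\phi,\chi}$ on $X_\phi$ is unitarily equivalent to the twisted Laplacian $\Delta_{\rho_\phi}$ on $X$; in particular the two operators have the same spectrum with multiplicities (the branch choice $r_{\phi,\chi,j}\in\mathbb{R}\cup i\mathbb{R}$ is immaterial since $f$ is even), and this holds verbatim whether or not $X_\phi$ is connected.

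Next I would invoke the Selberg trace formula for an arbitrary finite-dimensional unitary representation $\rho\colon\Gamma\to U(m)$ on $X=\Gamma\backslash\mathbb{H}$. For $f$ real-valued, even, with smooth compactly supported Fourier transform it reads
\[
\sum_{j\ge0}f(r_{\rho,j})=m(g-1)\int_{\mathbb{R}}f(r)\,r\tanh(\pi r)\,dr+\sum_{\substack{\gamma\in\mathcal{P}\\ k\ge1}}\frac{l_\gamma\,\hat f(kl_\gamma)}{2\sinh(kl_\gamma/2)}\operatorname{tr}\rho(\gamma^k),
\]
the identity term being the dimension $m$ times $(g-1)=\operatorname{Area}(X)/4\pi$, and the geometric sum ranging over primitive classes $\gamma\in\mathcal{P}$ and their powers $\gamma^k$, which exhaust the non-identity (automatically hyperbolic) conjugacy classes of $\Gamma$, with $l_{\gamma^k}=kl_\gamma$. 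Both sides converge under the stated hypotheses: $\hat f$ being supported in $[-1,1]$ leaves only finitely many geometric terms, while the rapid decay of the entire function $f$ together with the Weyl law controls the spectral side.

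Finally I would specialize to $\rho=\rho_\phi$, so $m=n$, and evaluate the geometric weights. As $\phi$ is a homomorphism, $\rho_\phi(\gamma^k)=\chi(\gamma^k)P_{\phi(\gamma)^k}=\chi(\gamma^k)P_{\phi(\gamma^k)}$, and the trace of a permutation matrix equals its number of fixed points, so $\operatorname{tr}\rho_\phi(\gamma^k)=\chi(\gamma^k)\#\textnormal{Fix}(\phi(\gamma^k))$; both $\chi$ and $\gamma\mapsto\#\textnormal{Fix}(\phi(\gamma))$ are class functions, so the geometric sum over conjugacy classes is well defined. Substituting this and $m=n$ into the trace formula above yields exactly $N(f;\phi)=N^{\text{det}}(f;\phi)+N^{\text{osc}}(f;\phi)$ as claimed. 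The only steps demanding any care are bookkeeping ones: matching the Fourier-transform normalization used in \cite{Naud} to the convention $\hat\psi(s)=\tfrac1{2\pi}\int_{\mathbb{R}}\psi(x)e^{-isx}\,dx$ fixed in the introduction, and checking that the length/$\sinh$ weights are recorded in terms of the primitive length $l_\gamma$ and the power index $k$ rather than the full length $kl_\gamma$. Neither is an obstacle; the one mild conceptual point — that covers are treated uniformly by passing to the $n$-dimensional permutation twist $\rho_\phi$ on the base $X$ — is precisely the content of Proposition 2.1 of \cite{Naud}.
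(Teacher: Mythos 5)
Your proposal is correct and follows exactly the route the paper indicates: it does not give its own proof of Theorem \ref{trace} but simply states that the formula is ``easily derived by combining Theorem 2.2 and Proposition 2.1 of \cite{Naud},'' and your argument spells out precisely that combination — the spectral identification of $\Delta_{\phi,\chi}$ on $X_\phi$ with the $\rho_\phi=\chi\otimes P_\phi$-twisted Laplacian on $X$, followed by the $U(n)$-twisted Selberg trace formula with $\operatorname{tr}\rho_\phi(\gamma^k)=\chi(\gamma^k)\#\textnormal{Fix}(\phi(\gamma^k))$.
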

Note that in the term $N^{\text{osc}}(f;\phi)$ we can collect together the terms containing
$\gamma$ and $\gamma^{-1}$ and have the sum be over $\mathcal{P}_{0}$.
Using:
\[
	\chi(\gamma^{k})+\overline{\chi(\gamma^{k})} = 2\mathfrak{R}(\chi(\gamma^{k})),
\]
we get:
\[
	N^{\text{osc}}(f;\phi)=\sum_{\substack{\gamma\in\mathcal{P}_{0}\\k\geq1}}\frac{\mathfrak{R}(\chi(\gamma^{k}))l_{\gamma}\hat{f}(kl_{\gamma})}{\sinh(kl_{\gamma}/2)}\#\text{Fix}(\phi(\gamma^{k})).
\]

Recall that earlier we defined:
\[
	N_{\phi}(L)=\sum_{j\geq 0}h(r_{\phi,\chi,j}),
\]
for:
\[
	h(r)=\psi(L(r-\alpha))+\psi(L(r+\alpha)).
\]
If $\phi$ is chosen uniformly at random from $\text{Hom}(\Gamma,S_n)$, the value of the statistic $N_{\phi}(L)$ is then a random variable on the space $\text{Hom}(\Gamma,S_n)$ which we denote by $N_{n}(L)$. Theorem \ref{trace} shows that this random variable may be decomposed as:
\[
	N_{n}(L) = N_{n}^{\text{det}}(L) + N_{n}^{\text{osc}}(L),
\]
where:
\[
	N_{n}^{\text{det}}(L) = n(g-1)\int_{\mathbb{R}}h(r)r\tanh(\pi r)dr,
\]
is a constant, and:
\[
	N_{n}^{\text{osc}}(L)=\sum_{\substack{\gamma\in\mathcal{P}_{0}\\k\geq1}}\frac{\mathfrak{R}(\chi(\gamma^{k}))l_{\gamma}\hat{h}(kl_{\gamma})}{\sinh(kl_{\gamma}/2)}F_{n}(\gamma^{k}).
\]

\subsection{The Variables $F_{n}(\gamma)$}
The proofs of Theorems \ref{main thm 1} and \ref{almost sure} require a fair bit of knowledge regarding the variables $F_{n}(\gamma)$.
Specifically, we need a way to estimate expressions of the form:
\[
	\mathbb{E}_{n}\left[F_{n}(\gamma^{2})F_{n}(\gamma^{3})F_{n}(\delta^{4})\right],
\]
for distinct $\gamma, \delta \in \mathcal{P}_{0}$ and large $n$.
Recently, Puder, Magee, and Zimhony considered these questions \cite{MP1, PZ}, showing:
\begin{thm}[Corollary 1.7 in \cite{PZ}]
	Let $1\neq\gamma\in \Gamma$ and write $\gamma = \gamma_{0}^q$ for $\gamma_{0}$ primitive and $q$ a positive integer. We have:
	\[
		F_{n}(\gamma) \xrightarrow{\text{dis}} \sum_{d|q}dZ_{1/d},
	\]
	where the $\{Z_{1/d}\}_{d\geq 1}$ are independent Poisson random variables with parameters $1/d$. In fact:
	\[
		\mathbb{E}_{n}[F_{n}(\gamma)] = d(q) + O_{\gamma}(1/n),
	\]
	where $d(q)$ is the number of positive divisors of $q$.
\end{thm}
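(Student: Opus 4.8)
The plan is to reduce the statement to the asymptotic evaluation of $\mathbb{E}_n[\#\mathrm{Fix}(\phi(w))]$ for words $w$ of the form $\gamma_0^m$, and then feed that into a moment computation. Write $\gamma=\gamma_0^q$ with $\gamma_0$ primitive. For any $\sigma\in S_n$ one has the elementary identity
\[
\#\mathrm{Fix}(\sigma^q)=\sum_{d\mid q}d\cdot C_d(\sigma),
\]
where $C_d(\sigma)$ is the number of $d$-cycles of $\sigma$; applied to $\sigma=\phi(\gamma_0)$ this gives $F_n(\gamma)=\sum_{d\mid q}d\cdot C_d(\phi(\gamma_0))$. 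Hence it suffices to prove (i) that $\bigl(C_d(\phi(\gamma_0))\bigr)_{d\ge1}$ converges in distribution to a vector $(Z_{1/d})_{d\ge1}$ of independent $\mathrm{Poisson}(1/d)$ variables, and (ii) that $\mathbb{E}_n[F_n(\gamma)]=d(q)+O_\gamma(1/n)$; both reduce to understanding $\mathbb{E}_n[\#\mathrm{Fix}(\phi(\gamma_0^m))]$ and its disjoint-configuration refinements.

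First I would record the combinatorial identity expressing $\mathbb{E}_n[\#\mathrm{Fix}(\phi(w))]$ through finite-index subgroups: decomposing $[n]$ into $\phi(\Gamma)$-orbits and using the exponential formula together with the asymptotics $\#\mathrm{Hom}(\Gamma,S_n)\sim2\,(n!)^{2g-1}$ (which follows from the Witten-zeta formula in the footnote, as for $g\ge2$ only the trivial and sign representations survive the limit), one sees that $\mathbb{E}_n[\#\mathrm{Fix}(\phi(w))]$ agrees, up to a lower-order error --- the non-transitive $\phi$ being negligible by \cite{LS} --- with a suitably normalized count of based finite-index subgroups $H\le\Gamma$ containing $w$. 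The main input, which is the content of the surface-group core-graph machinery of \cite{MP1,PZ}, is an expansion of this count in powers of $1/n$ indexed by the \emph{algebraic extensions} $H$ of $\langle w\rangle$ in $\Gamma$, the order of magnitude of each term being controlled by a complexity (Euler-characteristic) invariant of $H$: the $n^0$-terms are exactly those coming from algebraic extensions with Euler characteristic $0$, each contributing $1$, while all others are $O(1/n)$. For $w=\gamma_0^q$ the remaining book-keeping is group-theoretic: a finitely generated subgroup of a surface group is either of finite index (a closed surface group, of negative Euler characteristic) or free, and in the free case Euler characteristic $0$ forces it to be infinite cyclic (while $\mathbb{Z}^2$ cannot occur, by hyperbolicity); since $\gamma_0$ is not a proper power and surface groups have unique roots with cyclic centralizers, the Euler-characteristic-$0$ algebraic extensions of $\langle\gamma_0^q\rangle$ are exactly $\langle\gamma_0^d\rangle$ for $d\mid q$, that is, $d(q)$ of them. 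This yields $\mathbb{E}_n[\#\mathrm{Fix}(\phi(\gamma_0^m))]=d(m)+O_\gamma(1/n)$ for every $m\ge1$, which already gives assertion (ii).

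To upgrade this to the limit law, M\"obius inversion of $\#\mathrm{Fix}(\sigma^m)=\sum_{d\mid m}d\,C_d(\sigma)$ gives $\mathbb{E}_n[C_d(\phi(\gamma_0))]\to1/d$. For the full joint distribution I would run the same machinery on disjoint configurations of short cycles: the $(r_1,\dots,r_s)$-th joint factorial moment of $\bigl(C_{d_1}(\phi(\gamma_0)),\dots,C_{d_s}(\phi(\gamma_0))\bigr)$ counts ordered tuples of pairwise disjoint cycles of the prescribed lengths, which are governed by disjoint copies of the cyclic cores above, so the core-graph expansion makes these counts asymptotically factorize and the joint factorial moments converge to $\prod_i(1/d_i)^{r_i}$ --- precisely the joint factorial moments of independent $\mathrm{Poisson}(1/d_i)$ variables. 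Since a joint Poisson law is determined by its (bounded) factorial moments, the method of moments gives $\bigl(C_d(\phi(\gamma_0))\bigr)_{d\ge1}\xrightarrow{\text{dis}}(Z_{1/d})_{d\ge1}$ with the $Z_{1/d}$ independent; applying the continuous mapping theorem to the finite sum $\sum_{d\mid q}d\,C_d$ then yields $F_n(\gamma)\xrightarrow{\text{dis}}\sum_{d\mid q}d\,Z_{1/d}$.

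The genuinely hard step is the core-graph expansion for surface groups --- making precise that $\mathbb{E}_n[\#\mathrm{Fix}(\phi(w))]$ and its disjoint-configuration refinements admit a $1/n$-expansion whose $n^0$-coefficient counts the Euler-characteristic-$0$ algebraic extensions of $\langle w\rangle$. Over a free group this is Nica's computation as refined by Linial--Puder and Puder; over a surface group one must carry the relator $[a_1,b_1]\cdots[a_g,b_g]$ through the argument and work with morphisms of surface core complexes, which is the technical heart of \cite{MP1,PZ}. Everything downstream --- identifying the dominant subgroups, the M\"obius inversion, and the method-of-moments argument --- is then routine, the only care being to keep the error terms uniform in the fixed element $\gamma$.
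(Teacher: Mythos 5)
Note first that the paper does not prove this statement: it is quoted verbatim from \cite{PZ} (their Corollary 1.7) and used as a black box throughout, so there is no in-paper proof to compare against. What can be assessed is whether your outline is a faithful roadmap to the result, and on that score it is essentially correct. The reduction $\#\mathrm{Fix}(\sigma^q)=\sum_{d\mid q}d\,C_d(\sigma)$ is right, and the M\"obius-inversion step works because $\mu * 1 * 1 = 1$, so $\sum_{d\mid m}d\,\mathbb{E}_n[C_d(\phi(\gamma_0))]=d(m)+O(1/n)$ does yield $\mathbb{E}_n[C_d]\to 1/d$. Your identification of the Euler-characteristic-zero algebraic extensions of $\langle\gamma_0^q\rangle$ as exactly $\{\langle\gamma_0^d\rangle : d\mid q\}$ is also sound: an infinite-index finitely generated subgroup of a genus-$\geq 2$ surface group is free, a free group with $\chi=0$ is infinite cyclic, and unique roots in surface groups force the generator to be a power of $\gamma_0$ (the $\mathbb{Z}^2$ remark is superfluous since $\mathbb{Z}^2$ is not free, but harmless). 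The method-of-moments upgrade via joint factorial moments of disjoint cycle configurations is the standard way to get the joint Poisson limit and is the route \cite{PZ} actually takes.

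That said, the proposal is a roadmap rather than a proof: the entire analytic content --- the $1/n$-expansion of $\mathbb{E}_n[\#\mathrm{Fix}(\phi(w))]$ indexed by algebraic extensions of $\langle w\rangle$, the identification of the $n^0$ coefficient with the count of $\chi=0$ extensions, and its refinement to disjoint-configuration (factorial-moment) counts --- is delegated to the very references \cite{MP1,PZ} whose corollary you are reconstructing. You are explicit about this, which is to your credit, but it means the argument is circular if read as a self-contained proof and correct only as a description of the structure of the cited one. One smaller imprecision: the aside ``the non-transitive $\phi$ being negligible by \cite{LS}'' is not how the fixed-point-to-subgroup dictionary is usually set up; the standard identity for $\mathbb{E}_n[\#\mathrm{Fix}(\phi(w))]$ sums over all $\phi$ and over all finite-index subgroups containing $w$ without any restriction to transitive actions, and it is the Euler-characteristic grading (not a transitivity argument) that controls the error.
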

See the introduction of \cite{surfaces} for a broader overview of the results of Magee, Puder, and Zimhony.

We also have the following theorem and its corollaries from \cite{surfaces} which concern the independence of the variables $F_{n}(\gamma)$:
\begin{thm}[Theorem 1.8 in \cite{surfaces}] \label{main thm 2}
	Let $\gamma_{1},...,\gamma_{t}\in\mathcal{P}_{0}$ be distinct and for each $i$ let $r_{i}\geq 1$ and:
	\[
		a_{i,1},...,a_{i,r_{i}}\geq1,
	\]
	be integers. As $n\to\infty$ we have:

	\[
		\mathbb{E}_{n}\left[\prod_{i=1}^{t}\prod_{j=1}^{r_{i}}F_{n}(\gamma_{i}^{a_{i,j}})\right]=\prod_{i=1}^{t}\mathbb{E}_{n}\left[\prod_{j=1}^{r_{i}}F_{n}(\gamma_{i}^{a_{i,j}})\right] + O(1/n).
	\]
	with the implied constant dependent on  $\gamma_{1},...,\gamma_{t}$ and the integers $a_{i,j}$.
\end{thm}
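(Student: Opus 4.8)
The plan is to deduce Theorem~\ref{main thm 2} from the combinatorial model for expected fixed‑point counts, together with an Euler‑characteristic dichotomy in which the primitivity hypothesis is used in an essential way. Morally the statement says that the cycle statistics of $\phi(\gamma_1),\dots,\phi(\gamma_t)$ are asymptotically independent: each $F_n(\gamma_i^{a})$ is a function of the cycle type of $\phi(\gamma_i)$ alone, and the role of the hypothesis is that \emph{distinct primitive} classes yield permutations whose joint law is $O(1/n)$‑close to that of independent uniform elements of $S_n$.

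First I would expand $F_n(\gamma_i^{a_{i,j}})=\#\operatorname{Fix}(\phi(\gamma_i^{a_{i,j}}))=\sum_{x\in[n]}\mathbf 1[\phi(\gamma_i^{a_{i,j}})x=x]$, so that the left‑hand side becomes a sum over tuples $(x_{i,j})$ of the $\mathbb P_n$‑probability of the joint fixing event. Grouping the indices $(i,j)$ by the value of $x_{i,j}$ and recording which $\gamma_i$ each group touches yields a finite sum of ``pattern'' contributions. The patterns in which each group touches only one $\gamma_i$ reassemble, up to $O(1/n)$, into $\prod_{i}\mathbb E_n\big[\prod_j F_n(\gamma_i^{a_{i,j}})\big]$ — this invokes only the single‑geodesic case, which follows from the joint Poisson limit of $(F_n(\gamma_i^{a}))_a$ in \cite{PZ} together with moment control. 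Hence the whole problem reduces to showing that every pattern whose ``interaction graph'' on $\{\gamma_1,\dots,\gamma_t\}$ has an edge contributes $O(1/n)$.

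For the bound I would use the core‑surface / subgroup‑counting machinery of Magee--Puder and Puder--Zimhony (\cite{MP1,PZ}): for any $w_1,\dots,w_m\in\Gamma$,
\[
\mathbb E_n\Big[\prod_{s=1}^m\#\operatorname{Fix}(\phi(w_s))\Big]=\sum_{Q}c_Q\,n^{e(Q)}+O(1/n),
\]
the sum over finitely many quotient objects $Q$ (resolutions of the union of loops carrying $w_1,\dots,w_m$ inside finite partial covers of $X$, equivalently finitely generated subgroups $J\le\Gamma$ carrying the $w_s$), with $c_Q>0$ and $e(Q)\le 0$ an Euler‑characteristic defect; the $e(Q)=0$ objects give the main term, and for a \emph{connected} $Q$ one has $e(Q)=0$ iff $J$ is cyclic. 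Taking $\{w_s\}=\{\gamma_i^{a_{i,j}}\}$, an edge pattern corresponds to a connected $Q$ whose subgroup $J$ contains, up to conjugacy, nontrivial powers of two distinct $\gamma_i$. The key point — and, I expect, the real work — is that such a $J$ cannot be cyclic: if $J=\langle g\rangle$, then a conjugate of a power of $\gamma_i$ and a conjugate of a power of $\gamma_j$ would both be powers of $g$; since surface groups are torsion‑free with cyclic centralizers (hence unique roots) and $\gamma_i,\gamma_j$ are primitive, the corresponding conjugates of $\gamma_i$ and $\gamma_j$ would both equal the primitive root of $g$ up to inversion, forcing $\gamma_i$ conjugate to $\gamma_j^{\pm1}$, contradicting distinctness in $\mathcal P_0$. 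Therefore $e(Q)\le -1$ for every edge pattern, so each contributes $O(1/n)$.

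Finally I would assemble: summing over $Q$, the $e(Q)=0$ objects are exactly disjoint unions over $i$ of the individual leading objects of the $\gamma_i$, which reconstitute $\prod_i\mathbb E_n[\prod_j F_n(\gamma_i^{a_{i,j}})]$ up to $O(1/n)$; the edge patterns, together with the ``diagonal coincidence'' patterns in which points $x_{i,j}$ for different $i$ collide (these only decrease $e(Q)$ further), are $O(1/n)$ by the previous paragraph; and since only finitely many objects $Q$ occur, with multiplicities depending only on the $\gamma_i$ and the exponents, the implied constant has the stated dependence. The main obstacle is the uniform Euler‑characteristic accounting in the third paragraph — showing that genuinely coupling two non‑commensurable primitive loops always costs at least one unit of Euler characteristic, cleanly separated from the harmless re‑routings of the individual loops.
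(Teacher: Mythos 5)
This statement is imported from the companion paper \cite{surfaces} (it is Theorem~1.8 there) and is used in the present paper as a black box; there is no in-paper proof to compare against. Assessed on its own terms, your roadmap is pointed at the right tools --- the Magee--Puder / Puder--Zimhony asymptotic expansion for expected fixed-point counts of surface-group words --- and the algebraic core of your argument is correct: in the torsion-free hyperbolic group $\Gamma$ every nontrivial element has a primitive root unique up to inversion, so a cyclic subgroup cannot contain conjugates of nontrivial powers of two primitives that are not conjugate up to inversion, which is exactly where distinctness in $\mathcal{P}_{0}$ enters.

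As a proof, however, two steps are gaps rather than reductions. First, your opening paragraph claims that once an interaction pattern is ``decoupled'' (each $x$-block touches only one $\gamma_i$), the corresponding probabilities reassemble into $\prod_i\mathbb{E}_n\bigl[\prod_j F_n(\gamma_i^{a_{i,j}})\bigr]$ up to $O(1/n)$ ``by the single-geodesic case.'' That is not available at that stage: the fixing events for different $\gamma_i$ are all functions of the same random $\phi$ and do not factor by fiat; their approximate independence is precisely the statement being proved, and must itself emerge from the Euler-characteristic accounting rather than precede it. Second, the expansion you posit, $\mathbb{E}_n\bigl[\prod_s \#\mathrm{Fix}(\phi(w_s))\bigr]=\sum_Q c_Q n^{e(Q)}+O(1/n)$ indexed by finitely generated $J\le\Gamma$ with ``$e(Q)=0$ for connected $Q$ iff $J$ cyclic,'' is not the literal form the surface-group machinery of \cite{MP1,PZ} produces: the combinatorial objects there are tiled surfaces (or resolutions thereof), the raw sum is a priori infinite, and extracting a cyclic/noncyclic dichotomy with a quantified Euler-characteristic deficit is a theorem one must prove inside that framework. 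You flag this yourself as ``the real work''; it is, and it is carried out in \cite{surfaces}, not supplied here. So the sketch correctly identifies the mechanism and the role of the primitivity hypothesis, but it does not yet constitute a proof.
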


As a corollary, we get:
\begin{cor}[Corollary 1.9 in \cite{surfaces}] \label{main cor 2}
	In the same setting as Theorem \ref{main thm 2} we have:
	\[
		\mathbb{E}_{n}\left[\prod_{i=1}^{t}\prod_{j=1}^{r_{i}}F_{n}(\gamma_{i}^{a_{i,j}})\right]=\prod_{i=1}^{t}\underset{n\to\infty}{\lim}\mathbb{E}_{n}\left[\prod_{j=1}^{r_{i}}F_{n}(\gamma_{i}^{a_{i,j}})\right] + O(1/n),
	\]
	and:
	\[
		\underset{n\to\infty}{\lim}\mathbb{E}_{n}\left[\prod_{i=1}^{t}\prod_{j=1}^{r_{i}}F_{n}(\gamma_{i}^{a_{i,j}})\right]=\prod_{i=1}^{t}\underset{n\to\infty}{\lim}\mathbb{E}_{n}\left[\prod_{j=1}^{r_{i}}F_{n}(\gamma_{i}^{a_{i,j}})\right].
	\]
\end{cor}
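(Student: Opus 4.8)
The plan is to obtain Corollary \ref{main cor 2} from Theorem \ref{main thm 2} by feeding in the asymptotics of the ``single geodesic'' expectations $\mathbb{E}_{n}\big[\prod_{j=1}^{r_{i}}F_{n}(\gamma_{i}^{a_{i,j}})\big]$; the only real work is to control these with an error of size $O(1/n)$ rather than merely $o(1)$.

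First I would establish that for a single primitive class $\gamma\in\mathcal{P}_{0}$ and fixed exponents $a_{1},\dots,a_{r}\geq1$ one has
\[
	\mathbb{E}_{n}\Big[\prod_{j=1}^{r}F_{n}(\gamma^{a_{j}})\Big]=M(\gamma;a_{1},\dots,a_{r})+O(1/n)
\]
for a finite constant $M(\gamma;a_{1},\dots,a_{r})$. The mechanism: writing $F_{n}(\gamma^{a})=\sum_{d\mid a}d\,c_{d}(\phi(\gamma))$, where $c_{d}(\sigma)$ is the number of $d$-cycles of $\sigma$, the product $\prod_{j}F_{n}(\gamma^{a_{j}})$ becomes a fixed finite integer combination of monomials $\prod_{d\le D}c_{d}(\phi(\gamma))^{e_{d}}$ with $D=\max_{j}a_{j}$. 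The joint behaviour of the short-cycle counts $\big(c_{1}(\phi(\gamma)),\dots,c_{D}(\phi(\gamma))\big)$ for primitive $\gamma$ is governed by the results of Magee--Puder--Zimhony (this is the joint refinement of the quoted Corollary 1.7 of \cite{PZ}, and is precisely the type of single-class estimate underlying Theorem \ref{main thm 2}): these counts converge jointly, with all their joint moments, to independent Poisson variables with parameters $1,\tfrac12,\dots,\tfrac1D$, the moment convergence occurring at rate $O(1/n)$, in line with the estimate $\mathbb{E}_{n}[F_{n}(\gamma)]=d(q)+O(1/n)$ quoted above. Taking the appropriate linear combination gives the displayed estimate; in the notation of \cite{PZ} one may take $M(\gamma;a_{1},\dots,a_{r})=\mathbb{E}\big[\prod_{j}\sum_{d\mid a_{j}}dZ_{1/d}\big]$.

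Given this, the corollary is a short telescoping argument. Fix the data of the statement and set $M_{i,n}=\mathbb{E}_{n}\big[\prod_{j=1}^{r_{i}}F_{n}(\gamma_{i}^{a_{i,j}})\big]$ and $L_{i}=\lim_{n\to\infty}M_{i,n}$, which exists by the previous step and satisfies $M_{i,n}=L_{i}+O(1/n)$; each sequence $(M_{i,n})_{n}$ is bounded and each $L_{i}$ is finite. From the identity
\[
	\prod_{i=1}^{t}M_{i,n}-\prod_{i=1}^{t}L_{i}=\sum_{i=1}^{t}\Big(\prod_{i'<i}M_{i',n}\Big)\big(M_{i,n}-L_{i}\big)\Big(\prod_{i'>i}L_{i'}\Big),
\]
in which there are only $t$ summands, each a product of bounded factors with one factor $O(1/n)$, we get $\prod_{i}M_{i,n}=\prod_{i}L_{i}+O(1/n)$. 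Combining with Theorem \ref{main thm 2},
\[
	\mathbb{E}_{n}\Big[\prod_{i=1}^{t}\prod_{j=1}^{r_{i}}F_{n}(\gamma_{i}^{a_{i,j}})\Big]=\prod_{i=1}^{t}M_{i,n}+O(1/n)=\prod_{i=1}^{t}L_{i}+O(1/n),
\]
which is the first assertion of Corollary \ref{main cor 2}; letting $n\to\infty$ on both sides yields the second.

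The only genuine obstacle is the first step: the joint convergence of the short-cycle counts of $\phi(\gamma)$ for a \emph{single} primitive element of the surface group $\Gamma$, \emph{with} a quantitative $O(1/n)$ rate for the associated moments. For free groups this is classical; for surface groups it is exactly what the Magee--Puder--Zimhony machinery provides, and the point one must be careful about is that the rate $O(1/n)$ (rather than merely $o(1)$) is available, since this is precisely what upgrades the easy $o(1)$ version of the corollary --- immediate from Theorem \ref{main thm 2} together with plain convergence of the $M_{i,n}$ --- to the stated $O(1/n)$ version. Everything else is bookkeeping.
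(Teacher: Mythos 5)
Your reconstruction is sound, and since the paper itself does not prove the corollary here (it quotes it from the companion paper \cite{surfaces}, which also supplies Theorem \ref{main thm 2}), I can only judge it on its merits rather than line-by-line against an in-text argument. You correctly identify the one non-formal ingredient: the $t=1$ case of the first display, i.e.\ a quantitative rate
\[
\mathbb{E}_{n}\Big[\prod_{j=1}^{r}F_{n}(\gamma^{a_{j}})\Big]=\lim_{n\to\infty}\mathbb{E}_{n}\Big[\prod_{j=1}^{r}F_{n}(\gamma^{a_{j}})\Big]+O(1/n)
\]
for a single $\gamma\in\mathcal{P}_{0}$. Theorem \ref{main thm 2} with $t=1$ is a tautology, so this estimate genuinely has to come from the underlying Magee--Puder--Zimhony machinery (the paper's stated Corollary~1.7 of \cite{PZ} gives only the $r=1$ case with an $O_{\gamma}(1/n)$ error; for $r\geq 2$ one needs the joint short-cycle moment asymptotics of $\phi(\gamma)$, as you describe via $F_{n}(\gamma^{a})=\sum_{d\mid a}d\,c_{d}(\phi(\gamma))$). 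Given that input, your telescoping identity is correct (the boundedness of $(M_{i,n})_{n}$ is not circular: it follows from the single-class estimate you just invoked, since a sequence with a finite limit is bounded), each summand is $O(1/n)$, and chaining with Theorem \ref{main thm 2} gives the first assertion; the second assertion then follows by passing to the limit. In short: the argument is correct, and the only thing you are leaning on from outside is precisely the single-class $O(1/n)$ rate, which is indeed what \cite{surfaces} (and ultimately \cite{PZ}) supplies; you flag this honestly, so there is no hidden gap.
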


As an additional corollary, we get:
\begin{cor}[Corollary 1.10 in \cite{surfaces}] \label{dist cor}
	Let $\gamma_{1},...,\gamma_{t}\in\mathcal{P}_{0}$
	be distinct and for each $i$ let $r_{i}\geq 1$ and $a_{i,1},...,a_{i,r_{i}}\geq1$
	be integers. For each positive integer $k$ and $1\leq i \leq t$ let $Z_{1/k}^{(i)}$ be
	a Poisson random variable with parameter $1/k$, such that all $Z$-s are independent (in the strong sense, not just pairwise independent).
	Define:
	\[
		X^{(i)}_{a_{i,1},...,a_{i,r_{i}}} = \prod_{j=1}^{r_{i}}\sum_{k|a_{i,j}}kZ_{1/k}^{(i)},
	\]
	and note that for different $i$ the variables $X^{(i)}_{a_{i,1},...,a_{i,r_{i}}}$ are independent.
	Then the cross moments of:
	\[
		\prod_{j=1}^{r_{1}}F_{n}(\gamma_{1}^{a_{1,j}}),...,\prod_{j=1}^{r_{t}}F_{n}(\gamma_{t}^{a_{t,j}}),
	\]
	and of:
	\[
		X^{(1)}_{a_{1,1},...,a_{1,r_{1}}},...,X^{(t)}_{a_{t,1},...,a_{t,r_{t}}},
	\]
	are asymptotically equal. That is, for every $s_{1},...,s_{t}\in \mathbb{N}$ we have:
	\begin{multline}\label{cross eq}
		\underset{n\to\infty}{\lim}\mathbb{E}_{n}\left[\left(\prod_{j=1}^{r_{1}}F_{n}(\gamma_{1}^{a_{1,j}})\right)^{s_{1}}\cdot...\cdot\left(\prod_{j=1}^{r_{t}}F_{n}(\gamma_{t}^{a_{t,j}})\right)^{s_{t}}\right] = \\
		\mathbb{E}\left[\left(X^{(1)}_{a_{1,1},...,a_{1,r_{1}}}\right)^{s_{1}}\cdot...\cdot \left(X^{(t)}_{a_{t,1},...,a_{t,r_{t}}}\right)^{s_{t}}\right] = \\
		\mathbb{E}\left[\left(X^{(1)}_{a_{1,1},...,a_{1,r_{1}}}\right)^{s_{1}}\right] \cdot...\cdot\mathbb{E}\left[\left(X^{(t)}_{a_{t,1},...,a_{t,r_{t}}}\right)^{s_{t}}\right].
	\end{multline}
\end{cor}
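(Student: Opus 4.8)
The plan is to derive Corollary \ref{dist cor} directly from Theorem \ref{main thm 2} together with the single-geodesic distributional result of Puder--Zimhony (the theorem quoted just before Theorem \ref{main thm 2}). The key observation is that each of the three displayed quantities in \eqref{cross eq} is a \emph{mixed moment} of the random vector $\bigl(\prod_{j}F_{n}(\gamma_{1}^{a_{1,j}}),\dots,\prod_{j}F_{n}(\gamma_{t}^{a_{t,j}})\bigr)$, and each factor $\prod_{j=1}^{r_{i}}F_{n}(\gamma_{i}^{a_{i,j}})$ raised to the power $s_{i}$ is itself a product of the form $\prod_{j}F_{n}(\gamma_{i}^{b_{i,j}})$ (with the exponents $a_{i,j}$ each repeated $s_{i}$ times). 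So the whole expression under the limit on the left is exactly one of the mixed moments to which Theorem \ref{main thm 2} applies, with the $\gamma_{i}$ still distinct.

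First I would rewrite the left-hand side of \eqref{cross eq}: expanding the powers, $\bigl(\prod_{j=1}^{r_{i}}F_{n}(\gamma_{i}^{a_{i,j}})\bigr)^{s_{i}}=\prod_{m=1}^{s_{i}}\prod_{j=1}^{r_{i}}F_{n}(\gamma_{i}^{a_{i,j}})$, which is a product over $\gamma_{i}$ of the type appearing in Theorem \ref{main thm 2}. Applying that theorem (in the form of the second display of Corollary \ref{main cor 2}) gives that the limit of the total expectation factors as the product over $i$ of $\lim_{n\to\infty}\mathbb{E}_{n}\bigl[\bigl(\prod_{j=1}^{r_{i}}F_{n}(\gamma_{i}^{a_{i,j}})\bigr)^{s_{i}}\bigr]$; this establishes the equality between the first and third lines of \eqref{cross eq}, i.e.\ asymptotic independence of the $t$ blocks. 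Second, I would identify each single-block limit $\lim_{n\to\infty}\mathbb{E}_{n}\bigl[\bigl(\prod_{j=1}^{r_{i}}F_{n}(\gamma_{i}^{a_{i,j}})\bigr)^{s_{i}}\bigr]$ with $\mathbb{E}\bigl[\bigl(X^{(i)}_{a_{i,1},\dots,a_{i,r_{i}}}\bigr)^{s_{i}}\bigr]$. For this I use the Puder--Zimhony convergence in distribution $F_{n}(\gamma_{i}^{a})\xrightarrow{\mathrm{dis}}\sum_{k\mid a}kZ_{1/k}^{(i)}$; since convergence in distribution of a fixed-length vector of $F_{n}$'s at the various powers $\{\gamma_{i}^{a_{i,j}}\}_j$ to the corresponding vector of Poisson combinations — which again follows from Theorem \ref{main thm 2}/Corollary \ref{main cor 2} applied with a single $\gamma=\gamma_i$, upgrading joint moments to joint distribution via the method of moments — together with uniform integrability (the moments converge, hence the relevant moments are bounded, giving uniform integrability of each fixed polynomial) lets me pass the polynomial $\bigl(\prod_j x_j\bigr)^{s_i}$ through the limit.

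The one point that needs a little care — and which I expect to be the main technical obstacle — is the passage from convergence of moments to the statement about the limiting object $X^{(i)}$, i.e.\ justifying that the moment limits are genuinely the moments of $\prod_{j}\sum_{k\mid a_{i,j}}kZ_{1/k}^{(i)}$ and that the method-of-moments / uniform-integrability steps are legitimate. Here I would note that the limiting variables $\sum_{k\mid a}kZ_{1/k}$ are determined by their moments (they are polynomials in independent Poisson variables, which have moments growing slowly enough — subfactorially — for the Carleman condition to hold), so the moment computation on the right-hand side of \eqref{cross eq} is unambiguous and matches. Concretely, expanding $\bigl(X^{(i)}_{a_{i,1},\dots,a_{i,r_{i}}}\bigr)^{s_{i}}$ and taking expectations reduces to a finite sum of mixed Poisson moments, which is exactly what Theorem \ref{main thm 2} produces in the limit for the corresponding products of $F_{n}(\gamma_i^{a})$'s. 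Assembling these pieces: the outer factorization over $i$ comes from Theorem \ref{main thm 2}, each block limit is a moment of the Poisson-combination $X^{(i)}$, and the independence of the $X^{(i)}$ across $i$ is built into their construction, which yields all three equalities in \eqref{cross eq} and completes the proof.
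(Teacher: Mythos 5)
This corollary is cited as Corollary~1.10 of the companion paper \cite{surfaces}; the present paper does not contain a proof for it, so there is no in-text argument to compare against, but the proposal can still be assessed on its own merits.

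Your reduction of the multi-$\gamma$ factorization (the equality of the first and third displays in \eqref{cross eq}) to Corollary~\ref{main cor 2}/Theorem~\ref{main thm 2} is correct: expanding the $s_i$-th powers as repeated products over $j$ keeps the statement in the exact form to which those results apply, and since the $\gamma_i$ are still distinct, the outer limit factorizes over $i$. The independence of the $X^{(i)}$ across $i$ is then automatic from the construction. That part is fine.

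The gap is in the single-block identification. You assert that the joint convergence in distribution of the vector $\bigl(F_n(\gamma_i^{a_{i,1}}),\dots,F_n(\gamma_i^{a_{i,r_i}})\bigr)$ to $\bigl(\sum_{k\mid a_{i,1}}kZ^{(i)}_{1/k},\dots,\sum_{k\mid a_{i,r_i}}kZ^{(i)}_{1/k}\bigr)$ ``again follows from Theorem~\ref{main thm 2}/Corollary~\ref{main cor 2} applied with a single $\gamma=\gamma_i$.'' But for $t=1$ those statements degenerate to tautologies: Theorem~\ref{main thm 2} reads $\mathbb{E}_n[\,\cdot\,]=\mathbb{E}_n[\,\cdot\,]+O(1/n)$ and Corollary~\ref{main cor 2} reads $\lim\mathbb{E}_n[\,\cdot\,]=\lim\mathbb{E}_n[\,\cdot\,]$, giving no information whatsoever about what the single-$\gamma$ limit \emph{is}. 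The Puder--Zimhony theorem quoted in the text supplies only the marginal law of a single $F_n(\gamma^a)$; it does not say that the Poisson variables $Z_{1/k}$ appearing in the limits of $F_n(\gamma^{a_1})$ and $F_n(\gamma^{a_2})$ are the \emph{same} random variables, which is precisely the content needed to identify $\lim_n\mathbb{E}_n\bigl[\prod_j F_n(\gamma^{a_j})\bigr]$ with $\mathbb{E}\bigl[\prod_j \sum_{k\mid a_j}kZ_{1/k}\bigr]$. That joint (same-$\gamma$) convergence is exactly the nontrivial ingredient proved in \cite[Theorems~1.11 and 1.12]{surfaces}, as the paper itself remarks right after the corollary. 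So your argument is circular at this point: it requires the single-$\gamma$ case of the very corollary being proved, and the sources you invoke do not supply it. Everything else --- the Carleman/uniform-integrability considerations, the final assembly --- is sound once that input is granted, but it must be taken as an external result rather than derived from Theorem~\ref{main thm 2}.
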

Note that each variable $Z_{1/k}^{i}$ may appear several times in (\ref{cross eq}). Furthermore, it is worth mentioning that a stronger result than Corollary \ref{dist cor} is known.
Specifically, the variables $F_{n}(\gamma_{i}^{a_{i,j}})$ jointly converge in distribution towards the variables $X^{(i)}_{a_{i,j}}$, as demonstrated in \cite[Theorems 1.11 and 1.12]{surfaces}.

An example is in order.
Let $\gamma,\delta\in \mathcal{P}_{0}$ be distinct and suppose we wish to estimate $\mathbb{E}_{n}\left[F_{n}(\gamma^{2})F_{n}(\gamma^{3})F_{n}(\delta^{4})\right]$ for large $n$.
Using Corollary \ref{main cor 2} we have:
\begin{multline*}
	\mathbb{E}_{n}\left[F_{n}(\gamma^{2})F_{n}(\gamma^{3})F_{n}(\delta^{4})\right] = \\
	\underset{n\to\infty}{\lim}\mathbb{E}_{n}\left[F_{n}(\gamma^{2})F_{n}(\gamma^{3})\right]\underset{n\to\infty}{\lim}\mathbb{E}_{n}\left[F_{n}(\delta^{4})\right] + O_{\gamma,\delta}(1/n).
\end{multline*}
Using Corollary \ref{dist cor} for $F_{n}(\gamma^{2})F_{n}(\gamma^{3})$ and $F_{n}(\delta^{4})$ we have:
\begin{multline*}
	\mathbb{E}_{n}\left[F_{n}(\gamma^{2})F_{n}(\gamma^{3})\right]\to \mathbb{E}\left[X^{(1)}_{2,3}\right] =
	\mathbb{E}\left[\left(Z^{(1)}_{1}+2Z^{(1)}_{1/2}\right)\left(Z^{(1)}_{1} + 3Z^{(1)}_{1/3}\right)\right] =\\
	\mathbb{E}\left[\left(Z^{(1)}\right)^{2} + 2Z^{(1)}_{1}Z^{(1)}_{1/2} + 3Z^{(1)}_{1}Z^{(1)}_{1/3} + 6Z^{(1)}_{1/2}Z^{(1)}_{1/3}\right],
\end{multline*}
and:
\[
	\mathbb{E}_{n}\left[F_{n}(\delta^{4})\right]\to \mathbb{E}\left[X^{(2)}_{4}\right] = \mathbb{E}\left[Z^{(2)}_{1}+2Z^{(2)}_{1/2}+4Z^{(2)}_{1/4}\right].
\]
Recalling that $\mathbb{E}\left[Z_{\lambda}^{2}\right] = \lambda^{2} + \lambda$ we get:
\begin{multline*}
	\underset{n\to\infty}{\lim}\mathbb{E}_{n}\left[F_{n}(\gamma^{2})F_{n}(\gamma^{3})\right]\underset{n\to\infty}{\lim}\mathbb{E}_{n}\left[F_{n}(\delta^{4})\right] =\\
	\left(1+1 + 2\cdot1\cdot\frac{1}{2} + 3\cdot1\cdot\frac{1}{3} + 6\cdot\frac{1}{2}\cdot\frac{1}{3}\right)\cdot\left(1 + 2\cdot\frac{1}{2} + 4\cdot\frac{1}{4}\right) =15.
\end{multline*}
So that:
\[
	\mathbb{E}_{n}\left[F_{n}(\gamma^{2})F_{n}(\gamma^{3})F_{n}(\delta^{4})\right] = 15 + O_{\gamma,\delta}(1/n).
\]

For products of the form $F_{n}(\gamma^{a})F_{n}(\gamma^{b})$, we use the following simple lemma concerning their covariance:
\begin{lem}[Proposition 3.1 in \cite{Naud}]\label{G lem} For $\gamma \in \mathcal{P}_{0}$ and $a,c\geq1$ we have:
	\[
		\underset{n\to\infty}{\lim}\mathbb{E}_{n}\left[\left(F_{n}(\gamma^{a})-\mathbb{E}_{n}[F_{n}(\gamma^{a})]\right)\left(F_{n}(\gamma^{b})-\mathbb{E}_{n}[F_{n}(\gamma^{b})]\right)\right] = G(a,b),
	\]
	where:
	\[
		G(a,b) = \sum_{d|a\&d|b}d = \sigma(\gcd(a,b)),
	\]
	and $\sigma(n) \stackrel{\text{def}}{=} \sum_{d\mid n}d$ is the sum of divisors function.
\end{lem}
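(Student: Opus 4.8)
The plan is to pass to the limiting random variables supplied by the Poisson-limit results, import the needed moment convergence from Corollary \ref{dist cor}, and then evaluate a covariance by an elementary arithmetic computation. Throughout write $X_{c}=\sum_{d\mid c}dZ_{1/d}$, where $\{Z_{1/d}\}_{d\geq1}$ is a single family of independent Poisson variables with $Z_{1/d}$ of parameter $1/d$ (this is the $t=1$ case of the variables in Corollary \ref{dist cor}, so that both $X_{a}$ and $X_{b}$ are built from the \emph{same} family).

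First I would extract the three limits we need. Applying Corollary \ref{dist cor} with $t=1$, $r_{1}=2$, $(a_{1,1},a_{1,2})=(a,b)$ and $s_{1}=1$ gives
\[
	\lim_{n\to\infty}\mathbb{E}_{n}\!\left[F_{n}(\gamma^{a})F_{n}(\gamma^{b})\right]=\mathbb{E}\!\left[X_{a}X_{b}\right],
\]
and applying it with $r_{1}=1$ gives, for $c\in\{a,b\}$,
\[
	\lim_{n\to\infty}\mathbb{E}_{n}\!\left[F_{n}(\gamma^{c})\right]=\mathbb{E}[X_{c}]=\sum_{d\mid c}d\cdot\tfrac1d=d(c),
\]
consistent with the mean asymptotics quoted from \cite{PZ}. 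Since all three limits exist, expanding the centered product yields
\[
	\lim_{n\to\infty}\mathbb{E}_{n}\!\left[\big(F_{n}(\gamma^{a})-\mathbb{E}_{n}[F_{n}(\gamma^{a})]\big)\big(F_{n}(\gamma^{b})-\mathbb{E}_{n}[F_{n}(\gamma^{b})]\big)\right]=\mathbb{E}[X_{a}X_{b}]-\mathbb{E}[X_{a}]\mathbb{E}[X_{b}]=\operatorname{Cov}(X_{a},X_{b}).
\]

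It then remains to compute $\operatorname{Cov}(X_{a},X_{b})$, which is where the arithmetic enters. By bilinearity,
\[
	\operatorname{Cov}(X_{a},X_{b})=\sum_{d\mid a}\sum_{e\mid b}de\,\operatorname{Cov}(Z_{1/d},Z_{1/e}).
\]
Independence of the family $\{Z_{1/k}\}_{k\geq1}$ annihilates every term with $d\neq e$, while for $d=e$ one has $\operatorname{Cov}(Z_{1/d},Z_{1/d})=\operatorname{Var}(Z_{1/d})=1/d$. Hence only common divisors of $a$ and $b$ survive, and
\[
	\operatorname{Cov}(X_{a},X_{b})=\sum_{\substack{d\mid a\\ d\mid b}}d^{2}\cdot\tfrac1d=\sum_{\substack{d\mid a\\ d\mid b}}d .
\]
Since $d\mid a$ and $d\mid b$ hold simultaneously exactly when $d\mid\gcd(a,b)$, the right-hand side is $\sigma(\gcd(a,b))$, which is precisely $G(a,b)$.

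\textbf{Where the difficulty sits.} Once Corollary \ref{dist cor} is available there is no remaining analytic obstacle: that corollary already upgrades the distributional (Poisson) convergence of the counts $F_{n}(\gamma^{c})$ to convergence of their mixed moments, which is exactly the uniform-integrability input one would otherwise have to establish by hand in order to pass from convergence in law to convergence of second moments (the $F_{n}(\gamma^{c})$ are not uniformly bounded, so this step is not automatic). The only other point worth a sentence is that nothing here is sensitive to whether one indexes geodesics by $\mathcal{P}$ or $\mathcal{P}_{0}$, since only powers of the single class $\gamma$ occur and the relevant Poisson family is shared across those powers.
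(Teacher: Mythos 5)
Your proof is correct; each step checks out. Note that the paper itself offers no proof of this lemma — it is cited verbatim as Proposition 3.1 of Naud \cite{Naud}, with no argument reproduced — so you have in effect supplied a self-contained derivation that the paper leaves implicit. Your route is natural given the machinery assembled in Section \ref{bkgrd}: Corollary \ref{dist cor} (with $t=1$) already converts the distributional Poisson limits of $F_{n}(\gamma^{a})$, $F_{n}(\gamma^{b})$ into convergence of the first and second mixed moments, which is exactly the uniform-integrability input one would otherwise need to justify passing covariances through the limit. From there, expanding the centered product into $\mathbb{E}[X_{a}X_{b}]-\mathbb{E}[X_{a}]\mathbb{E}[X_{b}]=\operatorname{Cov}(X_{a},X_{b})$, using bilinearity and independence of the single family $\{Z_{1/d}\}$, and applying $\operatorname{Var}(Z_{1/d})=1/d$ gives $\sum_{d\mid a,\,d\mid b}d^{2}\cdot\tfrac1d=\sigma(\gcd(a,b))=G(a,b)$ cleanly. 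Naud's own argument proceeds by essentially the same second-moment computation against the limiting Poisson variables, so conceptually the two routes coincide; what you add is the explicit reduction to the paper's own Corollary \ref{dist cor}, which makes the lemma genuinely internal to this paper's toolkit rather than an import. (One cosmetic point, inherited from the paper: the lemma's hypothesis reads ``$a,c\geq1$'' but the body uses $a,b$; this is a typo in the source, not in your argument.)
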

Note that $G(a,b)\leq \min\{\sigma(a),\sigma(b)\}$, and as $\sigma(n)\ll n\log(n)$ we have \newline $G(a,b) \ll a\log(a)$.
Also, note that the bound $\sigma(n)\ll n\log n$ is not optimal, it is known that $\sigma(n)\ll n\log\log n$ (see \cite[Theorem 323]{HW}), however, we still use the weaker bound for ease of analysis.

\subsection{Counting Bounds}
As one sees in the trace formula, the lengths of primitive geodesics
on $X$ play a big role in the oscillating term $N_{n}^{\text{osc}}(L)$. An important
counting bound related to these lengths is the so-called ``prime
geodesic theorem'' which is an analog of the prime number theorem. Similarly to the
prime number theorem, one could ask how many closed geodesics are there on $X$ with
length smaller than some $T\gg0$, i.e. to estimate the function:
\par
\[
	N(T)=\sum_{\substack{kl_{\gamma}\leq T}}1,
\]
where $\gamma$ ranges over primitive conjugacy classes in $\Gamma$, i.e. over $\mathcal{P}$, and $k$ ranges over the positive integers.
\par
One could also consider the number of \textbf{primitive} closed geodesics on
$X$ with length smaller than $T$ i.e. to estimate:
\par
\[
	N^{0}(T)=\sum_{l_{\gamma}\leq T}1.
\]
This was first considered by Huber \cite{HB} and later improved by Hejhal \cite{HJ} and Randol \cite{RD}. We state the theorem as it appears in \cite{PB}.
\par
\begin{thm}[Theorem 9.6.1 in \cite{PB}] \label{prime geo} Let $0<\lambda_{1},...,\lambda_{k}\leq 1/4$ be the small eigenvalues of the non-twisted Laplacian on $X$. For each $1\leq i \leq k$ define
	$s_{i} = 1/2+(1/4-\lambda_{i})^{1/2}$, then:
	\[
		N^{0}(T) = \Li(e^T) + \sum_{1>s_{i}>3/4}\Li(e^{s_{i}T}) + O_{X}(e^{\frac{3}{4}T}/T).
	\]
	In particular, there exists a $3/4 \leq \nu < 1$ (dependent on $X$) such that:
	\[
		N^{0}(T)=\Li(e^T) + O_{X}(e^{\nu T}/T).
	\]
\end{thm}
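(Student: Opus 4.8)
The plan is to deduce Theorem \ref{prime geo} from the analytic behaviour of the Selberg zeta function of $X$, which in turn is controlled by the Selberg trace formula. For $\Re s > 1$ set
\[
Z_X(s) = \prod_{\gamma \in \mathcal{P}_0}\ \prod_{m \ge 0}\left(1 - e^{-(s+m)l_\gamma}\right),
\]
the product converging because $N^0(T) \ll e^T$ — a crude bound obtained either from a lattice-point count in $\mathbb{H}$ or from the trace formula of Theorem \ref{trace} with a fixed compactly supported test function. Using the untwisted trace formula in its resolvent form (tested against resolvent-type kernels, whose spectral transforms decay exponentially rather than being compactly supported), I would establish that $Z_X$ extends to an entire function of order $2$ whose zeros are: ``trivial'' zeros at the non-positive integers, with multiplicities governed by the identity term (hence by $\mathrm{Area}(X)$); and ``spectral'' zeros at $s = s_j$ and $s = 1 - s_j$, where $\lambda_j = s_j(1-s_j) = \tfrac14 + r_j^2$ runs over the Laplace spectrum of $X$. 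In particular $s = 1$ is a simple zero (coming from $\lambda_0 = 0$), the exceptional eigenvalues $0 < \lambda_i \le \tfrac14$ give the zeros at $s_i = \tfrac12 + \sqrt{\tfrac14 - \lambda_i} \in (\tfrac12, 1)$ appearing in the statement, and every remaining spectral zero lies on the critical line $\Re s = \tfrac12$.

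Next I would set up an explicit formula. Introduce the Chebyshev-type function $\psi_X(x) = \sum_{\gamma \in \mathcal{P},\, k\ge 1,\, e^{kl_\gamma} \le x} l_\gamma$; for $\Re s > 1$ one has, up to elementary factors, $\frac{Z_X'}{Z_X}(s) = \sum_{\gamma,k} \frac{l_\gamma e^{-skl_\gamma}}{1 - e^{-kl_\gamma}}$, so Perron's formula applied to $-\frac{Z_X'}{Z_X}(s)\frac{x^s}{s}$ together with a contour shift express $\psi_X(x)$ as a sum of residues. The pole at $s=1$ yields the main term $x$; the spectral zeros at the $s_i$ yield terms $\tfrac{x^{s_i}}{s_i}$; the trivial zeros contribute $O(1)$; and the zeros on $\Re s = \tfrac12$ must be summed. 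This last sum is where the Weyl law $\#\{j : |r_j| \le R\} = \frac{\mathrm{Area}(X)}{4\pi}R^2 + O(R)$ enters: it bounds the number of zeros in bounded vertical windows, and optimizing the truncation height in Perron's formula gives $\psi_X(x) = x + \sum_i \tfrac{x^{s_i}}{s_i} + O(x^{3/4})$.

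Finally, I would translate back to geodesic counts. Möbius inversion removes the iterates ($\psi_X(e^T) = \sum_{k\ge1}\vartheta_X(T/k)$ with $\vartheta_X(T) = \sum_{l_\gamma \le T} l_\gamma$, so $\vartheta_X(T) = e^T + \sum_i \tfrac{e^{s_i T}}{s_i} + O(e^{T/2}) + O(e^{3T/4})$), and partial summation against $d\vartheta_X(t)/t$ produces the logarithmic integrals: since $\int^T e^{ct}t^{-1}\,dt \sim e^{cT}/(cT)$ for $c>0$, one obtains $N^0(T) = \Li(e^T) + \sum_i \Li(e^{s_iT}) + O(e^{3T/4}/T)$, and the terms with $s_i \le \tfrac34$ satisfy $\Li(e^{s_iT}) \ll e^{3T/4}/T$ and fold into the error, leaving exactly the asserted formula. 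The main obstacle is the first step — producing the meromorphic continuation of $Z_X$ and the precise dictionary between its zeros and the Laplace spectrum from the trace formula (tracking the identity contribution, which produces both the trivial zeros and the correct functional-equation factor) — together with the zero-density estimate feeding the error term; the exponent $\tfrac34$ is exactly the wall one meets with the bare contour-shift argument, and improving it is a substantially harder problem requiring finer control of the vertical distribution of the $r_j$.
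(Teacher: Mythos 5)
The paper does not prove this statement; it is quoted verbatim as Theorem~9.6.1 of Buser's book \cite{PB} and used as a black box, so there is no in-paper argument to compare against. That said, your sketch is a standard and essentially correct route to the prime geodesic theorem — the Selberg zeta function $Z_X$, its entirety and order-$2$ growth, the zero/spectrum dictionary ($s=1$ from $\lambda_0=0$, the $s_i\in(\tfrac12,1)$ from exceptional eigenvalues, the rest on $\Re s = \tfrac12$), Perron plus a truncated contour shift with the Weyl law controlling the vertical zero density, and a final $\psi\to\vartheta\to N^0$ translation by removing iterates and summing by parts. This is the explicit-formula/zeta-function proof (in the spirit of Randol and Hejhal), whereas Buser's own Chapter~9 works more directly with the trace formula against carefully chosen test functions and a tauberian step rather than through $Z_X$. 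The two approaches buy the same $e^{3T/4}/T$ error; what you gain from the zeta route is a transparent account of why $3/4$ is the natural barrier (the Weyl-law zero count on $\Re s=\tfrac12$), and what you defer — as you candidly flag — is the nontrivial work of establishing the meromorphic continuation, order, and zero structure of $Z_X$ from the trace formula, which is where the real effort lies and which in the present paper is entirely outsourced to the citation.

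One small clean-up if you were to write this out: after the contour shift, the exceptional zeros give residues $x^{s_i}/s_i$, and once you pass to $N^0(T)$ via $\int d\vartheta_X(t)/t$ each such term becomes $\Li(e^{s_iT}) + O(e^{s_iT}/T^2)$; only the $s_i>\tfrac34$ survive outside the error, exactly as the statement groups them. Also, the step you call Möbius inversion is really just the elementary identity $\psi_X(e^T)=\sum_{k\ge1}\vartheta_X(T/k)$ together with $\vartheta_X(T/k)\ll e^{T/k}$ for $k\ge2$; no inversion is needed, just subtraction, and the $k\ge 2$ tail is $O(Te^{T/2})$ rather than $O(e^{T/2})$ (this is harmless since $Te^{T/2}\ll e^{3T/4}$, but it is worth stating correctly).
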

Note that in the proof of Theorem \ref{main thm 1} we will mostly use the following weaker form of the theorem above:
\[
	N^{0}(T)=\frac{e^{T}}{T}(1+o(1)).
\]
Noticing that $N(T)=N^{0}(T)+\sum_{k\geq2}N^{0}(T/k)$ we also get:
\[
	N(T)=N^{0}(T)+O(Te^{T/2}).
\]

For the proof of Theorem \ref{almost sure} we need a bound on the altered counting function:
\[
	N_{\chi}(T) = \sum_{\substack{kl_{\gamma}\leq T\\k\geq1}}\chi(\gamma^{k}),
\]
where $\gamma$ ranges over primitive conjugacy classes in $\Gamma$, i.e. over $\mathcal{P}$.
Proposition 4.2 of \cite{Naud} gives the following:
\begin{prop}\label{prime chi geo}
	For $\chi \neq 1$ there exists a $\beta < 1$ that is dependent on $X,\chi$ such that for $T\to \infty$:
	\[
		\left|N_{\chi}(T)\right| \ll e^{\beta T},
	\]
	where the implied constant depends on $X$.
\end{prop}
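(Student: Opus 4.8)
The plan is to reduce the bound on $N_\chi(T)$ to the analytic properties of the Selberg zeta function twisted by $\chi$, exactly as one proves the prime geodesic theorem (Theorem \ref{prime geo}) from the untwisted Selberg zeta function. Recall that the twisted Selberg zeta function is
\[
	Z_\chi(s) = \prod_{\gamma \in \mathcal{P}_0}\prod_{m\geq 0}\left(1 - \chi(\gamma)e^{-(s+m)l_\gamma}\right),
\]
which converges for $\Re(s) > 1$ and extends to an entire function of $s$ (this is standard; it is where one uses that $\chi$ is unitary and, crucially for a nontrivial bound, that $\chi \neq 1$, so that $\Delta_\chi$ has no eigenvalue $0$ and in fact has a spectral gap below some $\lambda_1(\chi) > 0$). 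The zeros of $Z_\chi$ in the critical strip lie at $s = s_j = 1/2 + i r_{\chi,j}$ coming from the eigenvalues $\lambda_{\chi,j} = 1/4 + r_{\chi,j}^2$ of $\Delta_\chi$; since $\chi \neq 1$ there is no zero at $s = 1$, and more generally the spectral gap pushes the first possible real zero to some $s = \sigma_0(\chi) < 1$.

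The key steps, in order, are as follows. First, I would form the logarithmic derivative $-Z_\chi'/Z_\chi(s)$, which for $\Re(s) > 1$ has a Dirichlet-type expansion
\[
	\frac{Z_\chi'}{Z_\chi}(s) = \sum_{\gamma \in \mathcal{P}_0}\sum_{k\geq 1}\frac{l_\gamma\, \chi(\gamma^k)}{1 - e^{-kl_\gamma}}\,e^{-skl_\gamma},
\]
so that the coefficients are essentially the weights appearing in $N_\chi(T)$ up to the harmless factor $(1 - e^{-kl_\gamma})^{-1} = 1 + O(e^{-kl_\gamma})$. Second, I would run a standard Perron / contour-integration argument: write a smoothed or sharp-cutoff version of $\sum_{kl_\gamma \leq T}\chi(\gamma^k)\,l_\gamma$ as an integral of $\frac{Z_\chi'}{Z_\chi}(s)\frac{e^{sT}}{s}$ over a vertical line $\Re(s) = 1 + \varepsilon$, then shift the contour to the left past $\Re(s) = 1$. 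Because $\chi \neq 1$ there is no pole at $s = 1$, so there is no main term: every contribution comes from the zeros of $Z_\chi$ and from a residual contour at $\Re(s) = \beta'$ for some $\beta' < 1$. Using the spectral gap and the known polynomial-in-$T$ bounds on the number of zeros of $Z_\chi$ up to height $T$ (Weyl law for $\Delta_\chi$), one concludes $\sum_{kl_\gamma\leq T}\chi(\gamma^k) l_\gamma \ll e^{\beta' T}$ for some $\beta' < 1$. Third, I would remove the length weight $l_\gamma$ by partial summation (Abel summation against $1/t$ over $t \in [0,T]$), which costs only a factor of $T$ and hence can be absorbed by enlarging $\beta'$ slightly to the claimed $\beta < 1$; the contribution of the higher powers $k \geq 2$ is $O(e^{T/2})$ by the trivial bound $|\chi| \leq 1$ and Theorem \ref{prime geo}, again absorbed into $e^{\beta T}$.

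The main obstacle is establishing the analytic input about $Z_\chi$: that it continues to an entire function with its nontrivial zeros confined to $\Re(s) \leq \sigma_0(\chi) < 1$ away from the central line, together with a polynomial bound on the zero-counting function and suitable growth control of $Z_\chi'/Z_\chi$ in vertical strips (needed to justify shifting the contour and bounding the horizontal segments). This is exactly the twisted analogue of the ingredients Hejhal and Randol used in the untwisted case, and for a unitary character of a cocompact $\Gamma$ it is classical — the trace formula of Theorem \ref{trace} itself encodes the needed functional equation and spectral side — but it is the substantive point; everything after it is the routine Tauberian machine. In practice, rather than redevelop this, I would simply cite Proposition 4.2 of \cite{Naud} (or Hejhal's book) where precisely this bound, with an explicit $\beta$ depending on the spectral gap of $\Delta_\chi$, is proved, and note that the dependence of the implied constant on $X$ (and the value of $\beta$ on $X$ and $\chi$) is exactly as in the untwisted prime geodesic theorem.
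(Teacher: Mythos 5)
Your proposal is correct and, in the end, coincides with what the paper actually does: the paper offers no proof and simply quotes Proposition 4.2 of Naud, which is exactly the citation you fall back on. Your preliminary sketch via the twisted Selberg zeta function, contour shifting past $\Re(s)=1$, and the spectral gap of $\Delta_\chi$ is an accurate outline of the argument underlying Naud's proposition, but is not reproduced in the paper.
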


\section{Proof of Theorem ~\ref{main thm 1}} \label{main thm 1 proof}
\subsection{Outline of the Proof}
Our proof is fairly standard. We start by using the trace formula in Theorem \ref{trace} to express $\mathbb{V}_{n}^{(k)}(L)$ as a sum over $k$-tuples of elements in $\mathcal{P}_{0}$ and $\mathbb{N}$. We then break apart the sum into smaller sums depending on how the $k$-tuple of elements from $\mathcal{P}_{0}$ are partitioned. Theorem \ref{B bound}, which will be proven in Section \ref{B bound proof}, allows us to bound the smaller sums using information contained in the partition alone. Subsequently, we split into cases based on the parity of $k$.
We show that in the case where $k$ is odd all the smaller sums vanish in the double limit, and in the case that $k$ is even we recover our desired moments.

\begin{proof}[Proof of Theorem ~\ref{main thm 1}]
	Recall that for a given $\phi \in \text{Hom}(\Gamma,S_{n})$ we define:
	\[
		N_{\phi}(L)=\sum_{j\geq 0}h(r_{\phi,\chi,j}),
	\]
	for:
	\[
		h(r)=\psi(L(r-\alpha))+\psi(L(r+\alpha)).
	\]
	Also recall that when $\phi$ is chosen uniformly at random from $\text{Hom}(\Gamma,S_{n})$, we denote the random value of the statistic $N_{\phi}(L)$ as $N_{n}(L)$.
	Using the twisted trace formula, Theorem \ref{trace}, the random variable $N_{n}(L)$ may be decomposed as:
	\[
		N_{n}(L) =N_{n}^{\text{det}}(L) + N_{n}^{\text{osc}}(L),
	\]
	where:
	\[
		N_{n}^{\text{det}}(L) = n(g-1)\int_{\mathbb{R}}h(r)r\tanh(\pi r)dr,
	\]
	and:
	\[
		N_{n}^{\text{osc}}(L) = \sum_{\substack{\gamma\in\mathcal{P}_{0}\\a\geq1}}\frac{\mathfrak{R}(\chi(\gamma^{a}))l_{\gamma}\hat{h}(al_{\gamma})}{\sinh(al_{\gamma}/2)}F_{n}(\gamma^{a}).
	\]

	A simple calculation shows:
	\[
		\hat{h}(\zeta)=\frac{2\cos(\alpha\zeta)}{L}\hat{\psi}(\zeta/L),
	\]
	so that:
	\[
		N_{n}^{\text{osc}}(L)=\frac{2}{L}\sum_{\substack{\gamma\in\mathcal{P}_{0}\\a\geq1}}\frac{\mathfrak{R}(\chi(\gamma^{a}))l_{\gamma}\hat{\psi}(al_{\gamma}/L)\cos(\alpha al_{\gamma})}{\sinh(al_{\gamma}/2)}F_{n}(\gamma^{a}).
	\]
	Set:
	\[
		U_{n}(\gamma)=F_{n}(\gamma)-\mathbb{E}_{n}[F_{n}(\gamma)],
	\]
	and:
	\[
		\begin{split}
			T_{n}(L) & =N_{n}^{\text{osc}}(L)-\mathbb{E}_{n}[N_{n}^{\text{osc}}(L)] \\
			         & =\frac{2}{L}\sum_{\substack{\gamma\in\mathcal{P}_{0}         \\a\geq1}}\frac{\mathfrak{R}(\chi(\gamma^{a}))l_{\gamma}\hat{\psi}(al_{\gamma}/L)\cos(\alpha al_{\gamma})}{\sinh(al_{\gamma}/2)}U_{n}(\gamma^{a}),
		\end{split}
	\]
	so that:
	\[
		\mathbb{V}_{n}^{(k)}(L)=\mathbb{E}_{n}\left[T_{n}(L)^{k}\right].
	\]

	Denote:
	\begin{equation}\label{H def}
		H(\gamma_{1},...,\gamma_{k},a_{1},...,a_{k})=
		\prod_{i=1}^{k}\frac{\mathfrak{R}(\chi(\gamma_{i}^{a_{i}}))l_{\gamma_{i}}\hat{\psi}(a_{i}l_{\gamma_{i}}/L)\cos(\alpha a_{i}l_{\gamma_{i}})}{\sinh(a_{i}l_{\gamma_{i}}/2)},
	\end{equation}
	therefore:
	\begin{multline}\label{original V}
		\mathbb{V}_{n}^{(k)}(L) = \mathbb{E}_{n}\left[T_{n}(L)^{k}\right]=\\
		\frac{2^{k}}{L^{k}}\sum_{\substack{\gamma_{1},...,\gamma_{k}\in\mathcal{P}_{0}\\a_{1},...,a_{k}\geq1}}
		H(\gamma_{1},...,\gamma_{k},a_{1},...,a_{k})\mathbb{E}_{n}\left[U_{n}(\gamma_{1}^{a_{1}})...U_{n}(\gamma_{k}^{a_{k}})\right].
	\end{multline}
	Note that for $\gamma_{1},...,\gamma_{k}\in\mathcal{P}_{0}$ if there exists an
	$i$ for which $l_{\gamma_{i}} \gg L$ then $\hat{\psi}(a_{i}l_{\gamma_{i}}/L) = 0$ (as $\hat{\psi}$ is compactly supported) so that $H(\gamma_{1},...,\gamma_{k},a_{1},...,a_{k}) = 0$ and so the sum above is in fact finite

	Before continuing, let us give a few important definitions.
	\begin{defn}[Partitions]
		Let $k$ be a positive integer. A $t$-tuple with $1\leq t\leq k$ of positive integers $\underline{r} = (r_{1},...,r_{t})$ is called a partition of $k$ if the following criteria are met:
		\begin{itemize}
			\item
			      The $r_{i}$-s are descending, that is $r_{1}\geq r_{2}\geq ...\geq r_{t}$.
			\item
			      $\sum_{i=1}^{t} r_{i}=k$.
		\end{itemize}
		If $\underline{r}$ is a partition of $k$ we denote it by $\underline{r}\vdash k$.
	\end{defn}
	\begin{defn}[Symmetries of a partition]
		For a partition $\underline{r}\vdash k$ we let $\text{Sym}(\underline{r})$ denote the symmetries of $\underline{r}$ that leave it a valid partition of $k$. Formally, for $\underline{r}=(r_{1},...,r_{t})\vdash k$ the group $\text{Sym}(\underline{r})$ is the subgroup of $S_{t}$ such that for every $\tau\in\text{Sym}(\underline{r})$ the $t$-tuple $(r_{\tau(1)},...,r_{\tau(t)})$ is still a partition of $k$. In other words, $\tau \in S_{t}$ is an element of $\text{Sym}(\underline{r})$ if and only if for every $1\leq i\leq j\leq t$ we have $r_{\tau(i)}\geq r_{\tau(j)}$.
	\end{defn}
	\begin{defn}[The set $\mathcal{P}_{0}(\underline{r})$]
		Let $\underline{r}\vdash k$ be a partition. Denote by \newline $\mathcal{P}_{0}(\underline{r}) = \mathcal{P}_{0}(r_{1},...,r_{t})$ the following set:
		\[
			\mathcal{P}_{0}(r_{1},...,r_{t}) \stackrel{def}{=}\left\{\underline{\gamma} = (\overbrace{\gamma_{1},...,\gamma_{1}}^{r_{1}},...,\overbrace{\gamma_{t},...,\gamma_{t}}^{r_{t}}) :\text{all } \gamma_{i}\in\mathcal{P}_{0} \text{ distinct}\right\}.
		\]
	\end{defn}

	Returning to our analysis, a simple manipulation shows that:
	\[
		\mathbb{E}_{n}\left[T_{n}(L)^{k}\right]=\frac{2^{k}}{L^{k}}\sum_{\underline{r}\vdash k}\frac{1}{\#\text{Sym}(\underline{r})}\binom{k}{\underline{r}}B(\underline{r}),
	\]
	where for $\underline{r}=(r_{1},...,r_{t})\vdash k$:
	\[
		\binom{k}{\underline{r}} = \binom{k}{r_{1},...,r_{t}} = \frac{k!}{r_{1}!...r_{t}!},
	\]
	is the usual multinomial coefficient,
	and $B(\underline{r})$ is defined as:
	\[
		B(\underline{r}) = B(r_{1},...,r_{t}) = \sum_{\substack{
				\underline{\gamma}\in\mathcal{P}_{0}(\underline{r})\\
				\underline{a}\in \mathbb{N}^{k}
			}}H(\underline{\gamma},\underline{a})\mathbb{E}_{n}[U_{n}(\underline{\gamma}^{\underline{a}})],
	\]
	where for $\underline{\gamma}\in \mathcal{P}_{0}(\underline{r})$ and $\underline{a}=(a_{1,1},...,a_{1,r_{1}},...,a_{t,1},...,a_{t,r_{t}})\in \mathbb{N}^{k}$
	we define:
	\[
		U_{n}(\underline{\gamma}^{\underline{a}})=\prod_{i=1}^{t}\prod_{j=1}^{r_{i}}U_{n}(\gamma_{i}^{a_{i,j}}).
	\]
	All in all:
	\begin{equation}\label{V n k eq}
		\mathbb{V}_{n}^{(k)}(L)=\mathbb{E}_{n}\left[T_{n}(L)^{k}\right] = \frac{2^{k}}{L^{k}}\sum_{\underline{r}\vdash k}\frac{1}{\#\text{Sym}(\underline{r})}\binom{k}{\underline{r}}B(\underline{r}).
	\end{equation}

	The core of our proof relies on the following result which allows us to bound the value of $B(\underline{r})$ given $\underline{r}$.
	\begin{thm}\label{B bound}
		Let $k$ be a positive integer and let $\underline{r} = (r_{1},...,r_{t}) \vdash k$ be a partition of $k$.
		We have the following bound on $B(\underline{r}) = B(r_{1},...,r_{t})$:
		\begin{itemize}
			\item
			      If there exists an $i$ for which $r_{i}=1$ then:
			      \[
				      \underset{n\to\infty}{\lim}B(\underline{r}) = 0.
			      \]
			\item
			      If for all $i$ we have $r_{i}\geq 2$ then:
			      \[
				      \underset{n\to\infty}{\lim}B(\underline{r}) \ll L^{2\#\{r_{i}=2\}}.
			      \]
		\end{itemize}
	\end{thm}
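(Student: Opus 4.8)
The plan is to expand $B(\underline{r})$ using the two key inputs from Section~\ref{bkgrd}: the asymptotic factorization of moments of the $F_n(\gamma^a)$ across distinct $\gamma$'s (Corollary~\ref{main cor 2}), and the explicit covariance/moment data for a single $\gamma$ (Lemma~\ref{G lem} and Corollary~\ref{dist cor}). Fix a partition $\underline{r}=(r_1,\dots,r_t)\vdash k$. For $\underline{\gamma}\in\mathcal{P}_0(\underline{r})$ with underlying distinct classes $\gamma_1,\dots,\gamma_t$, Corollary~\ref{main cor 2} gives
\[
	\mathbb{E}_n[U_n(\underline{\gamma}^{\underline{a}})] = \prod_{i=1}^{t}\underset{n\to\infty}{\lim}\mathbb{E}_n\!\left[\prod_{j=1}^{r_i}U_n(\gamma_i^{a_{i,j}})\right] + O(1/n),
\]
where the implied constant depends on the $\gamma_i$ and the exponents; crucially, because $\hat\psi$ is supported in $[-1,1]$, only geodesics with $l_{\gamma_i}\leq L$ and exponents with $a_{i,j}l_{\gamma_i}\leq L$ contribute, so the error sums over a controlled (finite, $n$-independent once $L$ is fixed) index set and disappears as $n\to\infty$. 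Thus $\lim_{n\to\infty}B(\underline{r})$ equals a sum over $\underline{\gamma},\underline{a}$ of $H(\underline{\gamma},\underline{a})$ times a product over $i$ of the limiting joint central moment $M_i:=\lim_n \mathbb{E}_n[\prod_{j=1}^{r_i}U_n(\gamma_i^{a_{i,j}})]$.

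Next I would analyze $M_i$ using Corollary~\ref{dist cor}: the centered variables $U_n(\gamma_i^{a_{i,1}}),\dots,U_n(\gamma_i^{a_{i,r_i}})$ converge jointly to centered versions of $\sum_{d\mid a_{i,j}}dZ_{1/d}^{(i)}$, which are finite linear combinations of centered independent Poisson variables with bounded (in terms of $a_{i,j}$, hence in terms of $L$) parameters. Two facts drive the proof. First, if $r_i=1$ then $M_i=\lim_n\mathbb{E}_n[U_n(\gamma_i^{a_{i,1}})]=0$ since $U_n$ is centered; hence the entire product vanishes and $\lim_n B(\underline{r})=0$, giving the first bullet. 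Second, if every $r_i\geq 2$, I bound $|M_i|$. For $r_i=2$, Lemma~\ref{G lem} gives $M_i=G(a_{i,1},a_{i,2})\leq\min\{\sigma(a_{i,1}),\sigma(a_{i,2})\}\ll a_{i,1}\log a_{i,1}$; summing the corresponding factor of $H$ over $a_{i,1},a_{i,2}$ with the constraints $a_{i,j}l_{\gamma_i}\leq L$ and over $\gamma_i$ with $l_{\gamma_i}\leq L$ — using the prime geodesic theorem $N^0(T)=\tfrac{e^T}{T}(1+o(1))$ to count geodesics and the $1/\sinh(al_\gamma/2)$ decay to make the geometric sums converge — produces exactly one factor of $L^2$ per index $i$ with $r_i=2$ (this is the same mechanism that yields Naud's variance; the $L^2$ arises because the diagonal-pairing sum $\sum_\gamma l_\gamma^2\hat\psi(l_\gamma/L)^2/\sinh^2(l_\gamma/2)\cdot(\text{count})$ scales like $L^2$ after the change of variables $x=l_\gamma/L$, against the $1/L^k$ prefactor). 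For $r_i\geq 3$, the key point is that $|M_i|$ is bounded by a constant depending only on the $a_{i,j}$ (polynomially in $L$), but the corresponding sum over the $r_i$-fold geodesic/exponent data is absolutely convergent and $O(1)$ in $L$ — because the product $\prod_j 1/\sinh(a_{i,j}l_{\gamma_i}/2)$ together with $\hat\psi$'s support and the $\ll e^{l_\gamma}/l_\gamma$ geodesic count leaves a convergent sum with no growing powers of $L$ once $r_i\geq 3$ (each extra repetition of $\gamma_i$ contributes another rapidly-decaying factor rather than another factor of $L$). Multiplying the per-block bounds gives $\lim_n B(\underline{r})\ll L^{2\#\{r_i=2\}}$, the second bullet.

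The main obstacle I anticipate is making the $r_i\geq 3$ estimate genuinely uniform: one must show that the joint central moment $M_i$ — a linear combination of products of up to $r_i$ centered Poisson variables — grows at most polynomially in the $a_{i,j}$ (equivalently in $L$) while the accompanying multi-geodesic sum $\sum_{\gamma_i,\,a_{i,1},\dots,a_{i,r_i}} |H\text{-factor}|$ stays bounded independently of $L$. The bound on $M_i$ follows from Corollary~\ref{dist cor} together with explicit Poisson moment formulas (e.g. $\mathbb{E}[Z_\lambda^m]$ is a polynomial in $\lambda$ of degree $m$, and here $\lambda=1/d\leq 1$, so in fact $M_i$ is bounded in terms of the number of divisors of the $a_{i,j}$, which is $O_\varepsilon(L^\varepsilon)$). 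The bounded-sum claim is the heart of it: one writes, for each block, $\sum_{\gamma_i}\sum_{a_{i,1},\dots,a_{i,r_i}\geq 1}\prod_{j}\frac{l_{\gamma_i}|\hat\psi(a_{i,j}l_{\gamma_i}/L)|}{\sinh(a_{i,j}l_{\gamma_i}/2)}\cdot|M_i|$, bounds $1/\sinh(a l/2)\ll e^{-al/2}$, uses $\hat\psi$'s support to force $a_{i,j}\leq L/l_{\gamma_i}$, and applies the prime geodesic theorem; the resulting sum over $\gamma_i$ of $l_{\gamma_i}^{r_i}e^{-(a_{i,1}+\cdots)l_{\gamma_i}/2}$ against the $\ll e^{l_{\gamma_i}}/l_{\gamma_i}$ density converges absolutely as soon as $r_i\geq 2$, but produces a factor $L^2$ only in the borderline case $r_i=2$ where the $\hat\psi$-truncation and the geodesic density conspire; for $r_i\geq 3$ the extra decay kills any $L$-growth. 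Care is also needed to absorb the $O(1/n)$ error from Corollary~\ref{main cor 2} uniformly — but since $L$ is fixed before $n\to\infty$, this is routine.
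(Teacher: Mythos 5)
Your proposal follows essentially the same route as the paper: factorize the expectation across blocks via Corollary~\ref{main cor 2}, observe that any block with $r_i=1$ contributes a vanishing factor since $U_n$ is centered, bound the per-block limiting central moment, and feed that bound together with the prime geodesic theorem into the geodesic/exponent sums to extract one factor of $L^2$ per $r_i=2$ block and an $O(1)$ contribution per $r_i\geq 3$ block. The only substantive slip is the parenthetical claim that the limiting joint central moment $M_i$ is controlled by divisor counts and hence $O_\varepsilon(L^\varepsilon)$: when you expand $\prod_j\bigl(\sum_{d\mid a_{i,j}}d(Z_{1/d}-\tfrac1d)\bigr)$ you pick up the weight $d_1\cdots d_{r_i}$ from the chosen divisors, so after summing over $d_j\mid a_{i,j}$ the correct per-block bound is $|M_i|\ll\prod_j\sigma(a_{i,j})\ll\prod_j a_{i,j}^{2}$, which is the paper's Lemma~\ref{R bound} and is genuinely polynomial in the exponents, not subpolynomial. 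This does not damage your argument --- the factor $e^{-a_{i,j}l_{\gamma_i}/2}$ absorbs any polynomial in the $a_{i,j}$ --- but the intermediate claim as written is false and should be replaced by the $\prod\sigma(a_{i,j})$ estimate.
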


	Given the above theorem, let us prove Theorem \ref{main thm 1}. We split into the $k$-odd and $k$-even cases:
	\subsection{Odd $k$}
	Let $k$ and $t$ be as in Theorem \ref{B bound} with $k$ odd. As $k$ is odd, for every partition $\underline{r}=(r_{1},...,r_{t})\vdash k$ either
	one of the $r_{i}$-s is equal to $1$, or one of the $r_{i}$-s is at least $3$.
	If one of the $r_{i}$-s is equal to $1$ then by the first bulletin of Theorem \ref{B bound}:
	\[
		\underset{n\to\infty}{\lim}\frac{2^{k}}{L^{k}}\frac{1}{\#\text{Sym}(\underline{r})}\binom{k}{\underline{r}}B(\underline{r}) = 0.
	\]
	On the other hand, if one of the $r_{i}$-s is at least $3$ then:
	\[
		2\#\{r_{i}=2\} \leq k - \sum_{r_{i}\geq3}r_{i}\leq k-3,
	\]
	and the second bulletin of Theorem \ref{B bound} gives:
	\[
		\underset{n\to\infty}{\lim}\frac{2^{k}}{L^{k}}\frac{1}{\#\text{Sym}(\underline{r})}\binom{k}{\underline{r}}B(\underline{r}) \ll L^{-3},
	\]
	so that:
	\[
		\underset{L\to\infty}{\lim}\underset{n\to\infty}{\lim}\frac{2^{k}}{L^{k}}\frac{1}{\#\text{Sym}(\underline{r})}\binom{k}{\underline{r}}B(\underline{r}) = 0.
	\]

	These two observations, together with Equation \ref{V n k eq}, yield:
	\[
		\begin{split}
			\underset{L\to\infty}{\lim}\underset{n\to\infty}{\lim}\mathbb{V}_{n}^{(k)}(L) & =
			\underset{L\to\infty}{\lim}\underset{n\to\infty}{\lim}\mathbb{E}_{n}[T_{n}(L)^{k}]                                                                                                                                                                                  \\
			                                                                              & =\underset{L\to\infty}{\lim}\underset{n\to\infty}{\lim}\frac{2^{k}}{L^{k}}\sum_{\underline{r}\vdash k}\frac{1}{\#\text{Sym}(\underline{r})}\binom{k}{\underline{r}}B(\underline{r}) \\
			                                                                              & =\sum_{\underline{r}\vdash k}\underset{L\to\infty}{\lim}\underset{n\to\infty}{\lim}\frac{2^{k}}{L^{k}}\frac{1}{\#\text{Sym}(\underline{r})}\binom{k}{\underline{r}}B(\underline{r}) \\
			                                                                              & =\sum_{\underline{r}\vdash k} 0                                                                                                                                                     \\
			                                                                              & = 0.
		\end{split}
	\]
	Note that we can exchange the summation and the limit as the sum over $\underline{r} \vdash k$ is finite (there are only a finite number of partitions of $k$).

	\subsection{Even $k$}
	Reasoning similar to the case where $k$ is odd shows that for even $k$ the only partitions $\underline{r} = (r_{1},...,r_{t}) \vdash k$ for which
	\[
		\underset{L\to\infty}{\lim}\underset{n\to\infty}{\lim}\frac{2^{k}}{L^{k}}\frac{1}{\#\text{Sym}(\underline{r})}\binom{k}{\underline{r}}B(\underline{r}) \neq 0,
	\]
	are the ones where $t=\frac{k}{2}$ and $r_{1}=...=r_{\frac{k}{2}}=2$. We denote this special partition by $2^{(k)}$ so that:
	\[
		B(2^{(k)}) =
		\sum_{\substack{
		\underline{\gamma}\in\mathcal{P}_{0}(2^{(k)})\\
		\underline{a}=(a_{1},b_{1},...,a_{k/2},b_{k/2})\\a_{i},b_{i}\geq1
		}}H(\underline{\gamma},\underline{a})\mathbb{E}_{n}[U_{n}(\underline{\gamma}^{\underline{a}})].
	\]
	Note that although the summation might seem infinite, the sum above is finite. Recalling the definition of $H(\underline{\gamma},\underline{a})$ from Equation \ref{H def}:
	\[
		H(\gamma_{1},...,\gamma_{k},a_{1},...,a_{k})=
		\prod_{i=1}^{k}\frac{\mathfrak{R}(\chi(\gamma_{i}^{a_{i}}))l_{\gamma_{i}}\hat{\psi}(a_{i}l_{\gamma_{i}}/L)\cos(\alpha a_{i}l_{\gamma_{i}})}{\sinh(a_{i}l_{\gamma_{i}}/2)},
	\]
	one notices that due to the compact support of $\hat{\psi}$, the expression $H(\underline{\gamma},\underline{a})$ vanishes whenever there is an $a_{i} \gg L$ or a $\gamma_{i}$ with length $l_{\gamma_{i}} \gg L$. Thus the sum defining $B(2^{(k)})$  is in fact finite.

	Let $\gamma_{1},...,\gamma_{k/2}\in \mathcal{P}_{0}$ be distinct and let $a_{1},b_{1},...,a_{k/2},b_{k/2}$ be positive integers.
	For:
	\[
		\underline{\gamma} = (\gamma_{1},\gamma_{1},...,\gamma_{k/2},\gamma_{k/2})\in\mathcal{P}_{0}(2^{(k)}),
	\]
	and $\underline{a} = (a_{1},b_{1},...,a_{k/2},b_{k/2})$,
	notice that:
	\begin{multline*}
		H(\underline{\gamma},\underline{a})=H(\gamma_{1},\gamma_{1},\gamma_{2},\gamma_{2},...,\gamma_{k/2},a_{1},b_{1},...,a_{k/2},b_{k/2}) =\\
		\prod_{i=1}^{k/2}H(\gamma_{i},\gamma_{i},a_{i},b_{i}).
	\end{multline*}
	In addition, Corollary \ref{main cor 2} gives:
	\begin{multline*}
		\underset{n\to\infty}{\lim}\mathbb{E}_{n}\left[U_{n}(\underline{\gamma}^{\underline{a}})\right]=
		\underset{n\to\infty}{\lim}\mathbb{E}_{n}\left[\prod_{i=1}^{k/2}U_{n}(\gamma_{i}^{a_{i}})U_{n}(\gamma_{i}^{b_{i}})\right] =\\
		\prod_{i=1}^{k/2}\underset{n\to\infty}{\lim}\mathbb{E}_{n}\left[U_{n}(\gamma_{i}^{a_{i}})U_{n}(\gamma_{i}^{b_{i}})\right].
	\end{multline*}
	As a consequence:
	\begin{multline}\label{B sum}
		\underset{n\to\infty}{\lim}B(2^{(k)}) =
		\sum_{\substack{
		\underline{\gamma}\in\mathcal{P}_{0}(2^{(k)})\\
		\underline{a}=(a_{1},b_{1},...,a_{k/2},b_{k/2})\\a_{i},b_{i}\geq1
		}}\underset{n\to\infty}{\lim}H(\underline{\gamma},\underline{a})\mathbb{E}_{n}[U_{n}(\underline{\gamma}^{\underline{a}})]\\
		= \sum_{\substack{
		\underline{\gamma}\in\mathcal{P}_{0}(2^{(k)})\\
		\underline{a}=(a_{1},b_{1},...,a_{k/2},b_{k/2})\\a_{i},b_{i}\geq1
		}}\prod_{i=1}^{k/2}H(\gamma_{i},\gamma_{i},a_{i},b_{i})\underset{n\to\infty}{\lim}\mathbb{E}_{n}\left[U_{n}(\gamma_{i}^{a_{i}})U_{n}(\gamma_{i}^{b_{i}})\right].
	\end{multline}
	Note that we can exchange the sum and the limit as the sum defining $B\left(2^{(k)}\right)$ is finite.

	Next, we wish to rewrite the sum in the LHS of Equation \ref{B sum} as a product. If the sum were over all possible $\gamma_{1}, ..., \gamma_{k/2} \in \mathcal{P}_{0}$ then it would equal:
	\begin{equation}\label{H prod}
		\prod_{i=1}^{k/2}\left[\sum_{\substack{\gamma\in\mathcal{P}_{0}\\a,b\geq1}}H(\gamma,\gamma,a,b)\underset{n\to\infty}{\lim}\mathbb{E}_{n}\left[U_{n}(\gamma^{a})U_{n}(\gamma^{b})\right]\right].
	\end{equation}
	However, as the sum is over $\underline{\gamma}\in\mathcal{P}_{0}$ for:
	\[
		\underline{\gamma} =(\gamma_{1},\gamma_{1},...,\gamma_{k/2},\gamma_{k/2}),
	\]
	all $\gamma_{i}$-s are distinct. Note that when opening up the product in Equation \ref{H prod} we get a sum over all possible tuples $\underline{\gamma}$ for which all $\gamma_{i}$-s are not necessarily distinct. Thus, to make the product in Equation \ref{H prod} equal to the sum in the LHS of Equation \ref{B sum} we must subtract from it the contribution from the terms where $\#\{\gamma_{1},...,\gamma_{k/2}\} < k/2$.

	For each choice of $\gamma_{1}, ..., \gamma_{k/2}$ such that $\#\{\gamma_{1},...,\gamma_{k/2}\} < k/2$, we look at the partition of $k/2$ defined by the distinct elements of $(\gamma_{1}, .., \gamma_{k/2})$. For example, in the $k=6$ case if $\gamma, \delta, \epsilon$ are distinct elements of $\mathcal{P}_{0}$ then the tuples $(\gamma,\gamma,\delta), (\delta, \epsilon, \delta)$ define the partition $(2,1)$ of $3$ while the tuples $(\gamma, \delta, \epsilon), (\epsilon, \gamma, \delta)$  define the partition $(1,1,1)$ of $3$.

	When subtracting from the product in Equation \ref{H prod} the contribution from the terms where $\#\{\gamma_{1},...,\gamma_{k/2}\} < k/2$, we group these terms by the partition of $k/2$ they define. For each such partition $\underline{r} \vdash k/2$
	there are only $\frac{1}{\#\text{Sym}(\underline{r})}\binom{k/2}{\underline{r}}$ ways it can occur in a $k/2$-tuple, we call each such way a $k/2$-tuple template of $\underline{r}$. For example, in the $k=6$ with $\gamma, \delta, \epsilon$ as before, the tuples $(\gamma,\gamma,\delta)$ and $(\delta,\delta,\epsilon)$ define the same $3$-tuple template of the partition $(2,1)$, while $(\gamma,\delta,\gamma)$ defines a different template.

	The sum over $(\gamma_{1}, .., \gamma_{k/2})$ partitioned according to any given $k/2$-tuple template of $\underline{r}$ is $B(2\underline{r})$, where for a partition $\underline{r}=(r_{1},...,r_{s})\vdash k/2$ we denote by $2\underline{r}$ the partition $(2r_{1},...,2r_{s})$ of $k$.  As a result, we get the following equality:

	\begin{multline}\label{prod split}
		\sum_{\substack{
		\underline{\gamma}\in\mathcal{P}_{0}(2^{(k)})\\
		\underline{a}=(a_{1},b_{1},...,a_{k/2},b_{k/2})\\a_{i},b_{i}\geq1
		}}\prod_{i=1}^{k/2}H(\gamma_{i},\gamma_{i},a_{i},b_{i})\underset{n\to\infty}{\lim}\mathbb{E}_{n}\left[U_{n}(\gamma_{i}^{a_{i}}\gamma_{i}^{b_{i}})\right] = \\
		\prod_{i=1}^{k/2}\left[\sum_{\substack{\gamma\in\mathcal{P}_{0}\\a,b\geq1}}H(\gamma,\gamma,a,b)\underset{n\to\infty}{\lim}\mathbb{E}_{n}\left[U_{n}(\gamma^{a})U_{n}(\gamma^{b})\right]\right] - \\
		\sum_{\substack{\underline{r} = (r_{1},...,r_{s})\vdash k/2 \\ 1\leq s <k/2}}\frac{1}{\#\text{Sym}(\underline{r})}\binom{k/2}{\underline{r}}\underset{n\to\infty}{\lim}B(2\underline{r}).
	\end{multline}

	For $1\leq s<k/2$ every partition:
	\[
		\underline{r}=(r_{1},...,r_{s})\vdash k/2,
	\]
	has an $i$ for which $r_{i}\geq 2$. Theorem \ref{B bound} gives:
	\[
		\sum_{\substack{\underline{r} = (r_{1},...,r_{s})\vdash k/2 \\ 1\leq s <k/2}}\frac{1}{\#\text{Sym}(\underline{r})}\binom{k/2}{\underline{r}}\underset{n\to\infty}{\lim}B(2\underline{r}) \ll L^{k-4},
	\]
	with the implied constant dependent on $k$.

	Let us now evaluate the product in the RHS of Equation \ref{prod split}. Note that its terms resemble the terms of the sum in Equation \ref{original V}. Corollary \ref{main cor 2}, implies that:
	\[
		\underset{n\to\infty}{\lim} \mathbb{E}_{n}\left[U_{n}(\gamma^{a})U_{n}(\delta^{b})\right] = 0,
	\]
	whenever $\gamma,\delta \in \mathcal{P}_{0}$ are distinct. Using Equation \ref{original V} we get:
	\begin{multline*}
		\sum_{\substack{\gamma\in\mathcal{P}_{0}\\a,b\geq1}}H(\gamma,\gamma,a,b)\underset{n\to\infty}{\lim}\mathbb{E}_{n}\left[U_{n}(\gamma^{a})U_{n}(\gamma^{b})\right] =\\
		\underset{n\to\infty}{\lim}\sum_{\substack{\gamma,\delta\in\mathcal{P}_{0}\\a,b\geq1}}H(\gamma,\delta,a,b)\mathbb{E}_{n}\left[U_{n}(\gamma^{a})U_{n}(\delta^{b})\right] =
		\underset{n\to\infty}{\lim}\frac{L^{2}}{4}\mathbb{V}_{n}^{(2)}(L).
	\end{multline*}
	Note that we are allowed to exchange the sum and the limit as the sum is finite.
	All in all, combining the above with Equation \ref{prod split} we get:
	\begin{multline}\label{most B 2}
		\underset{n\to\infty}{\lim}B(2^{(k)}) =\sum_{\substack{
		\underline{\gamma}\in\mathcal{P}_{0}(2^{(k)})\\
		\underline{a}=(a_{1},b_{1},...,a_{k/2},b_{k/2})\\a_{i},b_{i}\geq1
		}}\prod_{i=1}^{k/2}H(\gamma_{i},\gamma_{i},a_{i},b_{i})\underset{n\to\infty}{\lim}\mathbb{E}_{n}\left[U_{n}(\gamma_{i}^{a_{i}}\gamma_{i}^{b_{i}})\right] = \\
		\prod_{i=1}^{k/2}\left[\sum_{\substack{\gamma\in\mathcal{P}_{0}\\a,b\geq1}}H(\gamma,\gamma,a,b)\underset{n\to\infty}{\lim}\mathbb{E}_{n}\left[U_{n}(\gamma^{a})U_{n}(\gamma^{b})\right]\right] - \\
		\sum_{\substack{\underline{r} = (r_{1},...,r_{s})\vdash k/2 \\ 1\leq s <k/2}}\frac{1}{\#\text{Sym}(\underline{r})}\binom{k/2}{\underline{r}}\underset{n\to\infty}{\lim}B(2\underline{r}) = \\
		\left[\underset{n\to\infty}{\lim}\frac{L^{2}}{4}\mathbb{V}_{n}^{(2)}(L)\right]^{k/2} - O\left(L^{k-4}\right) =
		\frac{L^{k}}{2^{k}}\left[\underset{n\to\infty}{\lim}\mathbb{V}_{n}^{(2)}(L)\right]^{k/2} - O(L^{k-4}).
	\end{multline}

	Note that:
	\begin{multline*}
		\frac{2^{k}}{L^{k}}\sum_{\underline{r}\vdash k}\frac{1}{\#\text{Sym}(\underline{r})}\binom{k}{\underline{r}}B(\underline{r})=\\
		\frac{2^{k}}{L^{k}}\frac{1}{\#\text{Sym}(2^{(k)})}\binom{k}{2^{(k)}}B(2^{(k)}) +
		\sum_{\substack{\underline{r}=(r_{1},...,r_{t})\vdash k \\ \exists r_{i}\neq 2}}\frac{2^{k}}{L^{k}}\frac{1}{\#\text{Sym}(\underline{r})}\binom{k}{\underline{r}}B(\underline{r}).
	\end{multline*}
	Theorem \ref{B bound} alongside the fact that there are only a finite number of partitions of $k$ gives:
	\begin{multline*}
		\underset{L\to\infty}{\lim}\underset{n\to\infty}{\lim}\sum_{\substack{\underline{r}=(r_{1},...,r_{t})\vdash k \\ \exists r_{i}\neq 2}}\frac{2^{k}}{L^{k}}\frac{1}{\#\text{Sym}(\underline{r})}\binom{k}{\underline{r}}B(\underline{r}) =\\
		\sum_{\substack{\underline{r}=(r_{1},...,r_{t})\vdash k \\ \exists r_{i}\neq 2}}\underset{L\to\infty}{\lim}\underset{n\to\infty}{\lim}\frac{2^{k}}{L^{k}}\frac{1}{\#\text{Sym}(\underline{r})}\binom{k}{\underline{r}}B(\underline{r})
		=\sum_{\substack{\underline{r}=(r_{1},...,r_{t})\vdash k \\ \exists r_{i}\neq 2}}0= 0.
	\end{multline*}
	Thus:
	\[
		\begin{split}
			\underset{L\to\infty}{\lim}\underset{n\to\infty}{\lim}\mathbb{E}_{n}\left[T_{n}(L)^{k}\right] & =
			\underset{L\to\infty}{\lim}\underset{n\to\infty}{\lim}\frac{2^{k}}{L^{k}}\sum_{\underline{r}\vdash k}\frac{1}{\#\text{Sym}(\underline{r})}\binom{k}{\underline{r}}B(\underline{r})                                                     \\
			                                                                                              & =\underset{L\to\infty}{\lim}\underset{n\to\infty}{\lim}\frac{2^{k}}{L^{k}}\frac{1}{\#\text{Sym}(2^{(k)})}\binom{k}{2^{(k)}}B(2^{(k)}).
		\end{split}
	\]

	Recalling from Equation \ref{most B 2} that:
	\[
		\underset{n\to\infty}{\lim}B(2^{(k)}) = \frac{L^{k}}{2^{k}}\left[\underset{n\to\infty}{\lim}\mathbb{V}_{n}^{(2)}(L)\right]^{k/2} - O(L^{k-4}),
	\]
	we get:
	\begin{multline*}
		\underset{L\to\infty}{\lim}\underset{n\to\infty}{\lim}\mathbb{E}_{n}\left[T_{n}(L)^{k}\right]  =
		\underset{L\to\infty}{\lim}\underset{n\to\infty}{\lim}\frac{2^{k}}{L^{k}}\frac{1}{\#\text{Sym}(2^{(k)})}\binom{k}{2^{(k)}}B(2^{(k)})=\\
		\underset{L\to\infty}{\lim}\frac{2^{k}}{L^{k}}\frac{1}{\#\text{Sym}(2^{(k)})}\binom{k}{2^{(k)}}\left(\frac{L^{k}}{2^{k}}\left[\underset{n\to\infty}{\lim}\mathbb{V}_{n}^{(2)}(L)\right]^{k/2} - O(L^{k-4})\right)=\\
		\frac{1}{\#\text{Sym}(2^{(k)})}\binom{k}{2^{(k)}}\left[\underset{L\to\infty}{\lim}\underset{n\to\infty}{\lim}\mathbb{V}_{n}^{(2)}(L)\right]^{k/2}.
	\end{multline*}

	The partition $2^{(k)}$ has maximal symmetry, that is $\text{Sym}(2^{(k)}) = S_{k/2}$. In particular $\#\text{Sym}(2^{(k)}) = (k/2)!$.
	As $\binom{k}{2^{(k)}} = \frac{k!}{2^{k/2}}$, the standard relation:
	\[
		(k-1)!! = \frac{k!}{2^{k/2}(k/2)!},
	\]
	yields:
	\[
		\frac{1}{\#\text{Sym}(2^{(k)})}\binom{k}{2^{(k)}} = (k-1)!!.
	\]
	Using Theorem \ref{Naud main} we have:
	\[
		\begin{split}
			\underset{L\to\infty}{\lim}\underset{n\to\infty}{\lim}\mathbb{V}_{n}^{(k)}(L) & =
			\underset{L\to\infty}{\lim}\underset{n\to\infty}{\lim}\mathbb{E}_{n}\left[T_{n}(L)^{k}\right]                                                                                                                                      \\
			                                                                              & =\frac{1}{\#\text{Sym}(2^{(k)})}\binom{k}{2^{(k)}}\left[\underset{L\to\infty}{\lim}\underset{n\to\infty}{\lim}\mathbb{V}_{n}^{(2)}(L)\right]^{k/2} \\
			                                                                              & = (k-1)!!\left[\sigma_{\chi,\psi}^{2}\right]^{k/2}                                                                                                 \\
			                                                                              & = (k-1)!!\sigma_{\chi,\psi}^{k}.
		\end{split}
	\]
\end{proof}

\section{Proof of Theorem \ref{B bound}} \label{B bound proof}
Let $k$ be a positive integer and let $\underline{r} = (r_{1},...,r_{t}) \vdash k$.
Recall that we defined:
\[
	B(\underline{r}) = \sum_{\substack{
			\underline{\gamma}\in\mathcal{P}_{0}(\underline{r})\\
			\underline{a}\in \mathbb{N}^{k}
		}}H(\underline{\gamma},\underline{a})\mathbb{E}_{n}[U_{n}(\underline{\gamma}^{\underline{a}})].
\]
For $\gamma\in\mathcal{P}_{0}$ and positive integers $a_{1},...,a_{k}$ we denote:
\[
	R(a_{1},...,a_{k})=\underset{n\to\infty}{\lim}\mathbb{E}_{n}\left[U_{n}(\gamma^{a_{1}})...U_{n}(\gamma^{a_{k}})\right].
\]
Corollary \ref{dist cor} shows that the limit above exists and that $R(a_{1},...,a_{k})$ does not depend on the choice of $\gamma\in\mathcal{P}_{0}$.

For:
\[
	\underline{\gamma} = (\overbrace{\gamma_{1},...,\gamma_{1}}^{r_{1}},...,\overbrace{\gamma_{t},...,\gamma_{t}}^{r_{t}}) \in \mathcal{P}_{0}(\underline{r}),
\]
and:
\[
	\underline{a} = (a_{1,1},...,a_{1,r_{1}},...,a_{t,1},...,a_{t,r_{t}}) \in \mathbb{N}^{k},
\]
Corollary \ref{main cor 2} gives:
\begin{multline*}
	\underset{n\to\infty}{\lim}\mathbb{E}_{n}[U_{n}(\underline{\gamma}^{\underline{a}})]=
	\underset{n\to\infty}{\lim}\mathbb{E}_{n}\left[\prod_{i=1}^{t}\prod_{j=1}^{r_{i}}U_{n}(\gamma_{i}^{a_{i,j}})\right] =\\
	\prod_{i=1}^{t}\underset{n\to\infty}{\lim}\mathbb{E}_{n}\left[\prod_{j=1}^{r_{i}}U_{n}(\gamma_{i}^{a_{i,j}})\right] =
	\prod_{i=1}^{t}R(a_{i,1},...,a_{i,r_{i}}).
\end{multline*}
Thus:
\[
	\underset{n\to\infty}{\lim}B(\underline{r}) =
	\sum_{\substack{
			\underline{\gamma}\in\mathcal{P}_{0}(\underline{r})\\
			\underline{a}\in \mathbb{N}^{k}
		}}H(\underline{\gamma},\underline{a})\prod_{i=1}^{t}R(a_{i,1},...,a_{i,r_{i}}),
\]
where the exchange of the sum and the limit is justified as the sum is finite.

For any positive integer $a$ we have:
\[
	R(a) = \underset{n\to\infty}{\lim}\mathbb{E}_{n}\left[U_{n}(\gamma^{a})\right] =
	\underset{n\to\infty}{\lim}\mathbb{E}_{n}\left[F_{n}(\gamma^{a}) - \mathbb{E}_{n}\left[F_{n}(\gamma^{a})\right]\right] = 0.
\]
In particular, if there exists an $i$ for which $r_{i}=1$ then:
\[
	\underset{n\to\infty}{\lim}B(\underline{r}) = 0,
\]
proving the first part of Theorem \ref{B bound}.

As for the case where $r_{i}\geq 2$ for all $i$, we use the following bound:
\begin{lem} \label{R bound}
	Let $a_{1},...,a_{r}$ be positive integers. We have:
	\[
		|R(a_{1},...,a_{r})|\ll a_{1}^{2}...a_{r}^{2},
	\]
	where the implied constant depends on $r$.
\end{lem}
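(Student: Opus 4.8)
The plan is to bound $R(a_1,\dots,a_r)$ using the probabilistic description from Corollary \ref{dist cor}. By that corollary, $R(a_1,\dots,a_r) = \underset{n\to\infty}{\lim}\mathbb{E}_n\left[U_n(\gamma^{a_1})\cdots U_n(\gamma^{a_r})\right]$ equals the corresponding centered moment of the Poisson-type variables $X_{a_j} = \sum_{d\mid a_j}d Z_{1/d}$, where the $Z_{1/d}$ are independent Poisson variables with parameter $1/d$ (all sharing the same family, since they come from a single $\gamma$). So first I would rewrite $R(a_1,\dots,a_r) = \mathbb{E}\left[\prod_{j=1}^r\left(X_{a_j}-\mathbb{E}[X_{a_j}]\right)\right]$ with $Y_{a_j} \stackrel{\text{def}}{=} X_{a_j}-\mathbb{E}[X_{a_j}] = \sum_{d\mid a_j}d(Z_{1/d}-1/d)$.

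Next I would expand the product: $\prod_{j=1}^r Y_{a_j} = \sum_{d_1\mid a_1}\cdots\sum_{d_r\mid a_r}\left(\prod_j d_j\right)\prod_{j=1}^r (Z_{1/d_j}-1/d_j)$. Taking expectations, I would group the indices $j$ by which value $d_j$ takes; if some index $d$ appears exactly once among $d_1,\dots,d_r$, the corresponding factor $\mathbb{E}[Z_{1/d}-1/d]=0$ kills the whole term by independence. Hence only terms where every distinct divisor value is used at least twice survive; since $\mathbb{E}\left[\prod_j(Z_{1/d_j}-1/d_j)\right]$ is bounded in absolute value by a constant depending only on $r$ (a product of centered Poisson moments of order at most $r$, each of which is $O_r(1)$ because the parameters $1/d\le 1$), each surviving term contributes at most $O_r\left(\prod_j d_j\right) \le O_r\left(\prod_j a_j\right)$. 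The number of terms in the expansion is $\prod_j d(a_j) \le \prod_j a_j$, so crudely $|R(a_1,\dots,a_r)| \ll_r \left(\prod_j a_j\right)^2 = a_1^2\cdots a_r^2$, which is exactly the claimed bound. (In fact one could do better, but this suffices.)

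The one point requiring a little care is the claim that centered Poisson moments $\mathbb{E}\left[\prod_{j\in S}(Z_{1/d}-1/d)\right]$ for a fixed $d$ and $|S|\le r$ indices are bounded by a constant depending only on $r$: this follows since the $k$-th central moment of a Poisson($\lambda$) variable is a polynomial in $\lambda$ (Touchard/Bell-type formula) with no constant term, so for $0<\lambda\le 1$ it is bounded by a constant depending only on $k\le r$. I expect this to be the only genuine verification; everything else is bookkeeping. The main obstacle, such as it is, is just organizing the multi-index expansion cleanly, but since we only need an upper bound with an $r$-dependent implied constant, no sharp combinatorial identity is needed — a term-by-term majorization is enough.
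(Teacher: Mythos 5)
Your proof is correct and takes essentially the same route as the paper: both pass through Corollary~\ref{dist cor} to convert $R(a_1,\dots,a_r)$ into a moment of the limiting Poisson-combination variables, expand the product over divisors, bound the Poisson moment factors by an $r$-dependent constant (using that moments of $Z_\lambda$ are polynomials in $\lambda\le 1$), and then sum the divisor weights. The only stylistic difference is that the paper first reduces to the \emph{uncentered} moment $\lim_n\mathbb{E}_n[\prod_i F_n(\gamma^{a_i})]$ and bounds $\mathbb{E}[Z_{1/d_1}\cdots Z_{1/d_r}]$ via an iterated Cauchy--Schwarz (since the $Z_{1/d_j}$ need not be distinct), whereas you work directly with the centered product, group repeated indices so that independence across distinct $d$ factors the expectation, and bound each central Poisson moment; your extra observation that a singleton $d$ forces the term to vanish is a nice structural remark but is not needed for the $a_1^2\cdots a_r^2$ bound.
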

We defer the proof of Lemma \ref{R bound} to the end of the current section.

Let us now resume the proof of Theorem \ref{B bound} given Lemma \ref{R bound}.
Recall that we define:
\[
	H(\gamma_{1},...,\gamma_{k},a_{1},...,a_{k})=
	\prod_{i=1}^{k}\frac{\mathfrak{R}(\chi(\gamma_{i}^{a_{i}}))l_{\gamma_{i}}\hat{\psi}(a_{i}l_{\gamma_{i}}/L)\cos(\alpha a_{i}l_{\gamma_{i}})}{\sinh(a_{i}l_{\gamma_{i}}/2)},
\]
in particular:
\[
	\left|H(\gamma_{1},...,\gamma_{k},a_{1},...,a_{k})\right| \ll l_{\gamma_{1}}...l_{\gamma_{k}}e^{-\frac{a_{1}l_{\gamma_{1}}+...+a_{k}l_{\gamma_{k}}}{2}}.
\]
The fact that $\hat{\psi}$ is supported in $[-1,1]$ implies that $H$ vanishes when there is an $i$ for which $a_{i}\geq \frac{L}{\text{Sys}(X)}$ or an $i$ for which $l_{\gamma_{i}}\geq L$, where $\text{Sys}(X) > 0$ is the systole of $X$, that is, the length of the shortest closed geodesic on $X$.
Thus, given Lemma \ref{R bound}, the computations are straightforward:
\begin{multline*}
	\underset{n\to\infty}{\lim}|B(\underline{r})| \leq
	\sum_{\substack{
			\underline{\gamma}\in\mathcal{P}_{0}(\underline{r})\\
			\underline{a}\in \mathbb{N}^{k}
		}}\left|H(\underline{\gamma},\underline{a})\right|\prod_{i=1}^{t}|R(a_{i,1},...,a_{i,r_{i}})|\\
	\ll \sum_{\substack{\forall_{i} l_{\gamma_{i}} \leq L \\ \forall_{i,j} a_{i,j}\leq \frac{L}{\text{Sys}(X)}}}l_{\gamma_{1}}^{r_{1}}...l_{\gamma_{t}}^{r_{t}}e^{-\frac{\left(\sum_{j=1}^{r_{1}}a_{1,j}\right)l_{\gamma_{1}}+...+\left(\sum_{j=1}^{r_{t}}a_{t,j}\right)l_{\gamma_{t}}}{2}}\prod_{i=1}^{t}|R(a_{i,1},...,a_{i,r_{i}})|\\
	=\sum_{\forall_{i} l_{\gamma_{i}} \leq L}l_{\gamma_{1}}^{r_{1}}...l_{\gamma_{t}}^{r_{t}}\sum_{\forall_{i,j} a_{i,j}\leq \frac{L}{\text{Sys}(X)}}e^{-\frac{\left(\sum_{j=1}^{r_{1}}a_{1,j}\right)l_{\gamma_{1}}+...+\left(\sum_{j=1}^{r_{t}}a_{t,j}\right)l{\gamma_{t}}}{2}}\prod_{i=1}^{t}|R(a_{i,1},...,a_{i,r_{i}})|\\
	\ll\sum_{\forall_{i} l_{\gamma_{i}} \leq L}l_{\gamma_{1}}^{r_{1}}...l_{\gamma_{t}}^{r_{t}}\left(\prod_{i=1}^{t}\sum_{a_{1},...,a_{r_{i}}\geq1}e^{-\frac{(a_{1}+...+a_{r_{i}})l_{\gamma_{i}}}{2}}|R(a_{1},...,a_{r_{i}})|\right).
\end{multline*}

For $l_{\gamma}\gg1$ and a positive integer $r$, Lemma \ref{R bound} gives:
\begin{multline*}
	\sum_{a_{1},...,a_{r}\geq1}e^{-\frac{(a_{1}+...+a_{r})l_{\gamma}}{2}}|R(a_{1},...,a_{r})|\ll\\
	\sum_{a_{1},...,a_{r}\geq1}a_{1}^{2}...a_{r}^{2}e^{-\frac{(a_{1}+...+a_{r})l_{\gamma}}{2}}=
	\left(\sum_{a\geq1}a^{2}e^{-\frac{a l_{\gamma}}{2}}\right)^{r}\ll e^{-\frac{r l_{\gamma}}{2}}.
\end{multline*}
Using this estimate in our bound for $\underset{n\to\infty}{\lim}|B(\underline{r})|$ we get:
\begin{multline*}
	\underset{n\to\infty}{\lim}|B(\underline{r})|
	\ll\sum_{\forall_{i}l_{\gamma_{i}}\leq L}l_{\gamma_{1}}^{r_{1}}...l_{\gamma_{t}}^{r_{t}}\left(\prod_{i=1}^{t}\sum_{a_{1},...,a_{r_{i}}\geq1}e^{-\frac{(a_{1}+...+a_{r_{i}})l_{\gamma_{i}}}{2}}|R(a_{1},...,a_{r_{i}})|\right)\\
	\ll\sum_{\forall_{i}l_{\gamma_{i}}\leq L}l_{\gamma_{1}}^{r_{1}}...l_{\gamma_{t}}^{r_{t}}e^{-\frac{r_{1}l_{\gamma_{1}}+...+r_{t}l_{\gamma_{t}}}{2}}
	=\prod_{i=1}^{t}\sum_{\substack{\gamma\in\mathcal{P}_{0}\\l_{\gamma}\leq L}}l_{\gamma}^{r_{i}}e^{-\frac{r_{i}}{2}l_{\gamma}}.
\end{multline*}

Combining Theorem \ref{prime geo} and summation by parts we get:
\begin{multline*}
	\sum_{\substack{\gamma\in\mathcal{P}_{0}\\ l_{\gamma}\leq L}}l_{\gamma}^{r}e^{-\frac{r}{2}l_{\gamma}}\asymp
	-\int_{\text{Sys}(X)}^{L}\frac{e^{x}}{2x}\left(rx^{r-1}e^{-\frac{r}{2}x}-\frac{r}{2}x^{r}e^{-\frac{r}{2}x}\right)dx=\\
	\int_{\text{Sys}(X)}^{L}\left(\frac{r}{2}x^{r-1}-rx^{r-2}\right)e^{(1-\frac{r}{2})x}dx.
\end{multline*}
Note that we use $\frac{e^{x}}{2x}$ instead of $\frac{e^{x}}{x}$ for the density as we sum over $\mathcal{P}_{0}$ which counts only half of the geodesics.
All in all:
\begin{itemize}
	\item If $r=2$ then:
	      \[
		      \sum_{\substack{\gamma\in\mathcal{P}_{0}\\ l_{\gamma}\leq L}}l_{\gamma}^{r}e^{-\frac{r}{2}l_{\gamma}}\asymp \frac{1}{4}L^{2},
	      \]
	\item while if $r>2$ then:
	      \[
		      \underset{L\to\infty}{\lim}\sum_{\substack{\gamma\in\mathcal{P}_{0}\\ l_{\gamma}\leq L}}l_{\gamma}^{r}e^{-\frac{r}{2}l_{\gamma}}<\infty.
	      \]
\end{itemize}
These observations give:
\[
	\underset{n\to\infty}{\lim}B(\underline{r}) \ll L^{2\#\{r_{i}=2\}},
\]
with the implied constant depending on $\underline{r}$, proving the second part of Theorem \ref{B bound}.

Finally, we give the proof of Lemma \ref{R bound} below.
\begin{proof} [Proof of lemma ~\ref{R bound}]
	Let $\gamma\in\mathcal{P}_{0}$ and let $a_{1},...,a_{r}$ be positive integers.
	Recall that we define:
	\[
		R(a_{1},...,a_{r})=\underset{n\to\infty}{\lim}\mathbb{E}_{n}\left[U_{n}(\gamma^{a_{1}})...U_{n}(\gamma^{a_{r}})\right],
	\]
	and note that the above definition is independent of $\gamma$ as a consequence of Corollary \ref{dist cor}.
	Recalling that:
	\[
		U_{n}(\gamma) = F_{n}(\gamma) - \mathbb{E}_{n}[F_{n}(\gamma)],
	\]
	we have:
	\begin{equation}\label{R eq}
		R(a_{1},...,a_{r})=\underset{n\to\infty}{\lim}\mathbb{E}_{n}\left[\prod_{i}\left(F_{n}(\gamma^{a_{i}})-\mathbb{E}_{n}\left[F_{n}(\gamma^{a_{i}})\right]\right)\right].
	\end{equation}
	To show that:
	\[
		|R(a_{1},...,a_{r})|\ll a_{1}^{2}...a_{r}^{2},
	\]
	it suffices to show that:
	\begin{equation}\label{R assumption}
		\underset{n\to\infty}{\lim}\mathbb{E}_{n}\left[\prod_{i}F_{n}(\gamma^{a_{i}})\right]\ll a_{1}^{2}...a_{r}^{2}.
	\end{equation}

	This follows from the observation that when we open up the RHS of Equation \ref{R eq},
	we get a sum of expectations
	of monomials in the $F_{n}(\gamma^{a_{i}})$. Equation \ref{R assumption} then gives us the bound $a_{1}^{2}...a_{r}^{2}$ on
	all such terms, while
	the number of such terms is some constant (dependent on $r$)
	which implies $R(a_{1},...,a_{r})\ll a_{1}^{2}...a_{r}^{2}$.

	Corollary \ref{dist cor} implies that as $n\to\infty$:
	\[
		\underset{n\to\infty}{\lim}\mathbb{E}_{n}\left[\prod_{i}F_{n}(\gamma^{a_{i}})\right] = \mathbb{E}\left[\prod_{i}\sum_{d|a_{i}}dZ_{1/d}\right],
	\]
	where the $\{Z_{1/d}\}_{d\geq 1}$ are independent Poisson random variables with parameters $1/d$.
	This gives:
	\[
		\underset{n\to\infty}{\lim}\mathbb{E}_{n}\left[\prod_{i}F_{n}(\gamma^{a_{i}})\right] = \mathbb{E}\left[\prod_{i}\sum_{d|a_{i}}dZ_{1/d}\right]
		= \sum_{d_{i}|a_{i}}d_{1}...d_{r}\mathbb{E}\left[Z_{1/d_{1}}...Z_{1/d_{r}}\right].
	\]
	Repeated application of the Cauchy-Schwarz inequality gives:
	\[
		\left|\mathbb{E}\left[Z_{1/d_{1}}...Z_{1/d_{r}}\right]\right|\leq\prod_{i}\mathbb{E}[Z_{1/d_{i}}^{2^{i}}]^{1/2^{i}}\ll\prod_{i}1^{1/2^{i}}=1.
	\]
	This is due to the fact that
	the $m$'th moment of $Z_{\lambda}$ is a polynomial of degree $m$
	in $\lambda$.
	Note that the Poisson variables in the expectation above are not necessarily independent, as some $d_{i}$ could show up more than once in the product.
	Finally, using the bound $\sigma(n) \ll n^{2}$ (see \cite[Theorem 323]{HW} for the bound $\sigma(n) \ll n\log\log n$) we get:
	\[
		\begin{split}
			\left|\underset{n\to\infty}{\lim}\mathbb{E}_{n}\left[\prod_{i}F_{n}(\gamma^{a_{i}})\right]\right| & \ll
			\sum_{d_{i}|a_{i}}d_{1}...d_{r}\left|\mathbb{E}\left[Z_{1/d_{1}}...Z_{1/d_{r}}\right]\right|                                           \\
			                                                                                                  & \ll\sum_{d_{i}|a_{i}}d_{1}...d_{r} \\
			                                                                                                  & =\prod_{i}\sigma(a_{i})            \\
			                                                                                                  & \ll a_{1}^{2}...a_{r}^{2}.
		\end{split}
	\]
\end{proof}

\newpage

\section {Proof of Theorem ~\ref{almost sure}}\label{almost sure proof}
\subsection{Outline of the Proof}
Using the twisted trace formula in Theorem \ref{trace} we express $\mathbb{V}_{T,L,n}$ as a sum over all $\gamma,\delta \in \mathcal{P}_{0}$ and $a,b \in\mathbb{N}$.
We then split the sum into a diagonal term $\text{Diag}$ and an off-diagonal term $\text{Off}$.
The diagonal term $\text{Diag}$ counts the contribution from pairs $(\gamma,\gamma)\in \mathcal{P}_{0}^{2}$,
while the off-diagonal term $\text{Off}$ counts the contribution from pairs $(\gamma,\delta)\in \mathcal{P}_{0}^{2}$ for which $\gamma \neq \delta$.
Subsequently, we show:
\[
	\mathbb{E}_{n}\left|\mathbb{V}_{T,L,n}-\sigma_{\chi,\psi}^{2}\right|
	\leq\frac{4\pi}{L^{2}}\mathbb{E}_{n}\left|\text{Off}\right|+\mathbb{E}_{n}\left|\frac{4\pi}{L^{2}}\text{Diag}-\sigma_{\chi,\psi}^{2}\right|,
\]
then proceed to bound each of the terms in later sections.

To bound $\mathbb{E}_{n}\left|\text{Off}\right|$ we first use the asymptotic independence of the variables $\{U_{n}(\gamma^{a})\}_{\substack{\gamma\in \mathcal{P}_{0}\\ a\geq 1}}$ to reduce the pairs $(\gamma,\delta)\in \mathcal{P}_{0}^{2}$ we sum over in the term $\text{Off}$ to those pairs for which there exist positive integers $a,b,c,d\leq L$ such that $|al_{\gamma}-bl_{\delta}|\ll 1/T$ and $|cl_{\gamma}-dl_{\delta}|\ll 1/T$. We then show that assuming $L=o(T)$ the only such pairs are entries of integer matrices with determinant 0. Finally, we split the sum over such matrices into three sums and then give an upper bound on all three. The final bound on $\mathbb{E}_{n}\left|\text{Off}\right|$ can be found in Proposition \ref{off bound}.

As for the bound on $\mathbb{E}_{n}\left|\frac{4\pi}{L^{2}}\text{Diag}-\sigma_{\chi,\psi}^{2}\right|$, we use the Cauchy-Schwarz inequality to reduce the problem to that of estimating $\mathbb{E}_{n}\left[\text{Diag}\right]$ and $\mathbb{E}_{n}\left[\text{Diag}^{2}\right]$. Using the asymptotic independence of the variables $\{U_{n}(\gamma^{a})\}_{\substack{\gamma\in \mathcal{P}_{0}\\ a\geq 1}}$ and summation by parts, we give estimates that when used produce the bound found in Proposition \ref{diag bound}.

\begin{proof}[Proof of Theorem ~\ref{almost sure}]
	Using the notation from Subsection \ref{trace sec}, we recall that the value of the statistic $N_{\phi}(L)$ for random $\phi\in \text{Hom}(\Gamma,S_{n})$ is denoted by $N_{n}(L)$. Using the trace formula in Theorem \ref{trace}, we recall that $N_{n}(L)$ has the decomposition:
	\[
		N_{n}(L) =N_{n}^{\text{det}}(L) + N_{n}^{\text{osc}}(L),
	\]
	where:
	\[
		N_{n}^{\text{det}}(L) = n(g-1)\int_{\mathbb{R}}h(r)r\tanh(\pi r)dr,
	\]
	is constant, and:
	\[
		N_{n}^{\text{osc}}(L) = \sum_{\substack{\gamma\in\mathcal{P}_{0}\\k\geq1}}\frac{\mathfrak{R}(\chi(\gamma^{k}))l_{\gamma}\hat{h}(kl_{\gamma})}{\sinh(kl_{\gamma}/2)}F_{n}(\gamma^{k}).
	\]
	In addition, recall from the proof of Theorem \ref{main thm 1} that we denote:
	\[
		T_{n}(L)=N_{n}^{\text{osc}}(L)-\mathbb{E}_{n}[N_{n}^{\text{osc}}(L)],
	\]
	and note that as $N^{\text{det}}_{n}(L)$ is independent of the choice of $\phi$ we have:
	\[
		N_{n}(L) - \mathbb{E}_{n}[N_{n}(L)] = T_{n}(L).
	\]
	After using the simple identity:
	\[
		\hat{h}(\zeta)=\frac{2\cos(\alpha\zeta)}{L}\hat{\psi}(\zeta/L),
	\]
	and recalling that we define:
	\[
		U_{n}(\gamma) = F_{n}(\gamma) - \mathbb{E}_{n}\left[F_{n}(\gamma)\right],
	\]
	we have:
	\[
		T_{n}(L)=\frac{2}{L}\sum_{\substack{\gamma\in\mathcal{P}_{0}\\k\geq1}}\frac{\mathfrak{R}(\chi(\gamma^{k}))l_{\gamma}\hat{\psi}(kl_{\gamma}/L)\cos(\alpha kl_{\gamma})}{\sinh(kl_{\gamma}/2)}U_{n}(\gamma^{k}).
	\]

	Taking the expected value with respect to $\alpha$ of $T_{n}(L)$ we see that:
	\[
		\mathbb{E}_{T}\left[T_{n}(L)\right]=\frac{2}{L}\sum_{\substack{\gamma\in\mathcal{P}_{0}\\k\geq1}}\frac{\mathfrak{R}(\chi(\gamma^{k}))l_{\gamma}\hat{\psi}(kl_{\gamma}/L)}{\sinh(kl_{\gamma}/2)}U_{n}(\gamma^{k})\mathbb{E}_{T}\left[\cos(\alpha kl_{\gamma})\right].
	\]
	One easily sees $\mathbb{E}_{T}\left[\cos(\alpha kl_{\gamma})\right]=2\pi\hat{w}\left(Tkl_{\gamma}\right)$
	so that:
	\[
		\mathbb{E}_{T}\left[T_{n}(L)\right]=\frac{4\pi}{L}\sum_{\substack{\gamma\in\mathcal{P}_{0}\\k\geq1}}\frac{\mathfrak{R}(\chi(\gamma^{k}))l_{\gamma}\hat{\psi}(kl_{\gamma}/L)}{\sinh(kl_{\gamma}/2)}U_{n}(\gamma^{k})\hat{w}\left(Tkl_{\gamma}\right).
	\]
	As $kl_{\gamma}\gg1$ we have, due to $\hat{w}$ being compactly supported,
	that:
	\[
		\mathbb{E}_{T}\left[T_{n}(L)\right]=0,
	\]
	for $T\gg1$. Thus for random $\phi$ and $T\gg1$ we have:
	\[
		\begin{split}
			\mathbb{V}_{T,L,n}=\mathbb{V}_{T}[N_{n}(L) - \mathbb{E}_{n}[N_{n}(L)]]=
			\mathbb{V}_{T}\left[T_{n}(L)\right]=\mathbb{E}_{T}\left[T_{n}(L)^{2}\right].
		\end{split}
	\]

	Set:
	\[
		s(\gamma,k)=\frac{\mathfrak{R}(\chi(\gamma^{k}))l_{\gamma}\hat{\psi}(kl_{\gamma}/L)}{\sinh(kl_{\gamma}/2)},
	\]
	and:
	\[
		f_{n}(\gamma,k)=s(\gamma,k)U_{n}(\gamma^{k}) = \frac{\mathfrak{R}(\chi(\gamma^{k}))l_{\gamma}\hat{\psi}(kl_{\gamma}/L)}{\sinh(kl_{\gamma}/2)}U_{n}(\gamma^{k}).
	\]
	Using the identity $\cos x\cos y=\frac{1}{2}\cos(x+y)+\frac{1}{2}\cos(x-y)$ gives:
	\begin{multline*}
		\mathbb{E}_{T}\left[T_{n}(L)^{2}\right]=\\
		\frac{4\pi}{L^{2}}\sum_{\substack{\gamma,\delta\in\mathcal{P}_{0}\\a,b\geq1}}f_{n}(\gamma,a)f_{n}(\delta,b)\left(\hat{w}\left(T(al_{\gamma}-bl_{\delta})\right)+\hat{w}\left(T(al_{\gamma}+bl_{\delta})\right)\right).
	\end{multline*}
	As $al_{\gamma}+bl_{\delta}\gg1$ we have for $T\gg1$ that:
	\[
		\mathbb{E}_{T}\left[T_{n}(L)^{2}\right]=\frac{4\pi}{L^{2}}\sum_{\substack{\gamma,\delta\in\mathcal{P}_{0}\\a,b\geq1}}f_{n}(\gamma,a)f_{n}(\delta,b)\hat{w}\left(T(al_{\gamma}-bl_{\delta})\right).
	\]
	Rewrite the sum above as $\text{Off}+\text{Diag}$ where:
	\[
		\text{Off}=\sum_{\substack{\gamma\neq\delta\\a,b\geq1}}f_{n}(\gamma,a)f_{n}(\delta,b)\hat{w}\left(T(al_{\gamma}-bl_{\delta})\right),
	\]
	and:
	\[
		\text{Diag}=\sum_{\substack{\gamma\in\mathcal{P}_{0}\\a,b\geq1}}f_{n}(\gamma,a)f_{n}(\gamma,b)\hat{w}\left(T(a-b)l_{\gamma}\right),
	\]
	so that:
	\[
		\mathbb{E}_{T}\left[T_{n}(L)^{2}\right]=\frac{4\pi}{L^{2}}\text{Off} + \frac{4\pi}{L^{2}}\text{Diag}.
	\]
	Note that if $a\neq b$ then $|(a-b)l_{\gamma}|\gg1$ so that for
	$T\gg1$ we have:
	\[
		\text{Diag}=\frac{1}{2\pi}\sum_{\substack{\gamma\in\mathcal{P}_{0}\\a\geq1}}f_{n}(\gamma,a)^{2}.
	\]

	Using Markov's inequality we get:
	\[
		\mathbb{P}_{n}\left(\left|\mathbb{V}_{T,L,n}-\sigma_{\chi,\psi}^{2}\right|\geq\epsilon\right)\leq\frac{\mathbb{E}_{n}\left|\mathbb{V}_{T,L,n}-\sigma_{\chi,\psi}^{2}\right|}{\epsilon},
	\]
	while:
	\[
		\begin{split}
			\mathbb{E}_{n}\left|\mathbb{V}_{T,L,n}-\sigma_{\chi,\psi}^{2}\right| & =\mathbb{E}_{n}\left|\mathbb{E}_{T}\left[T_{n}(L)^{2}\right]-\sigma_{\chi,\psi}^{2}\right|                                                   \\
			                                                                     & \leq\frac{4\pi}{L^{2}}\mathbb{E}_{n}\left|\text{Off}\right|+\mathbb{E}_{n}\left|\frac{4\pi}{L^{2}}\text{Diag}-\sigma_{\chi,\psi}^{2}\right|.
		\end{split}
	\]

	Consider the following two propositions:
	\begin{prop}\label{off bound}
		If $L = o(T)$ then for $T\gg 1$:
		\[
			\mathbb{E}_{n}\left|\textnormal{Off}\right| \ll \sqrt{\frac{L^{3}}{T}} + L + \sqrt{O_{L}(1/n)}.
		\]
	\end{prop}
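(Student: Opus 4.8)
The plan is to estimate the second moment $\mathbb{E}_n\!\left[\text{Off}^2\right]$ and then invoke Cauchy--Schwarz in the form $\mathbb{E}_n\left|\text{Off}\right|\leq\sqrt{\mathbb{E}_n\!\left[\text{Off}^2\right]}$, together with the subadditivity $\sqrt{x+y+z}\leq\sqrt{x}+\sqrt{y}+\sqrt{z}$ for $x,y,z\geq0$. Thus it suffices to prove
\[
\mathbb{E}_n\!\left[\text{Off}^2\right]\ll\frac{L^3}{T}+L^2+O_L(1/n).
\]
Expanding the square, $\text{Off}^2$ is a sum over $(\gamma,\delta),(\gamma',\delta')\in\mathcal{P}_0^2$ with $\gamma\neq\delta$, $\gamma'\neq\delta'$ and over $a,b,c,d\geq1$, with deterministic weight $s(\gamma,a)s(\delta,b)s(\gamma',c)s(\delta',d)\,\hat w(T(al_\gamma-bl_\delta))\,\hat w(T(cl_{\gamma'}-dl_{\delta'}))$ and random factor $\mathbb{E}_n[U_n(\gamma^a)U_n(\delta^b)U_n(\gamma'^c)U_n(\delta'^d)]$. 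Since $\hat\psi$ is supported in $[-1,1]$, the weight vanishes unless all four lengths are $\leq L$ and $a,b,c,d\leq L/\text{Sys}(X)$, so the sum is finite. By Theorem \ref{main thm 2} and Corollary \ref{main cor 2}, and because $\lim_n\mathbb{E}_n[U_n(\gamma^a)]=0$, every configuration in which one of the four base geodesics occurs with multiplicity one contributes, in total, only $O_L(1/n)$; and since $\gamma\neq\delta$, $\gamma'\neq\delta'$, the surviving configurations are exactly $(\gamma',\delta')=(\gamma,\delta)$ and $(\gamma',\delta')=(\delta,\gamma)$, which contribute equally. For these, Corollary \ref{main cor 2} factorises the random part over the two distinct geodesics and Lemma \ref{G lem} evaluates it, so up to $O_L(1/n)$ one gets
\[
2\sum_{\substack{\gamma\neq\delta\\a,b,c,d\geq1}}s(\gamma,a)s(\delta,b)s(\gamma,c)s(\delta,d)\,\hat w\big(T(al_\gamma-bl_\delta)\big)\,\hat w\big(T(cl_\gamma-dl_\delta)\big)\,G(a,c)\,G(b,d).
\]

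Next I would pass to absolute values, using $|s(\gamma,a)|\ll l_\gamma e^{-al_\gamma/2}$, $G\geq0$, and the fact that $\hat w$ is supported in some $[-C,C]$, so that the sum is dominated by
\[
\sum_{\substack{\gamma\neq\delta,\ l_\gamma,l_\delta\leq L\\ 1\leq a,b,c,d\leq L/\text{Sys}(X)\\ |al_\gamma-bl_\delta|\leq C/T,\ |cl_\gamma-dl_\delta|\leq C/T}}l_\gamma^2 l_\delta^2\,e^{-(a+c)l_\gamma/2}\,e^{-(b+d)l_\delta/2}\,G(a,c)\,G(b,d).
\]
The decisive step is a determinant argument: writing $M=\left(\begin{smallmatrix}a&b\\c&d\end{smallmatrix}\right)$, one has the identity $(\det M)\,l_\gamma=d(al_\gamma-bl_\delta)-b(cl_\gamma-dl_\delta)$, hence $|\det M|\,l_\gamma\ll L/T$; since $\det M\in\mathbb{Z}$ and $l_\gamma\geq\text{Sys}(X)$, the hypothesis $L=o(T)$ forces $\det M=0$ once $T$ is large, i.e. $ad=bc$. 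Parametrising the rank-one solutions as $(a,b)=u_1(p,q)$, $(c,d)=u_2(p,q)$ with $\gcd(p,q)=1$ and $u_1,u_2\geq1$, we have $G(a,c)=\sigma(p\gcd(u_1,u_2))$, $G(b,d)=\sigma(q\gcd(u_1,u_2))$, and the first constraint becomes $u_1|pl_\gamma-ql_\delta|\leq C/T$; I would simply retain this constraint and drop the second.

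I would then split the remaining sum into three pieces: (i) $p=q=1$ with $l_\gamma=l_\delta$; (ii) $p=q=1$ with $l_\gamma\neq l_\delta$; (iii) $p\neq q$. In (i), the prime geodesic theorem (Theorem \ref{prime geo}) yields the multiplicity bound $\#\{\gamma\in\mathcal{P}_0:l_\gamma=\ell\}\ll_X e^{\nu\ell}/\ell$ with $\nu<1$ (let $\varepsilon\to0$ in $N^0(\ell+\varepsilon)-N^0(\ell-\varepsilon)$), whence $\sum_{l_\gamma=l_\delta\leq L}l_\gamma^2 l_\delta^2 e^{-l_\gamma-l_\delta}\ll\sum_\ell (e^{\nu\ell}/\ell)^2\ell^4 e^{-2\ell}=O(1)$, the $u_1=u_2=1$ term dominating and larger $(u_1,u_2)$ carrying even more decay. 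In (ii), the dominant term is again $u_1=u_2=1$, for which one estimates $\sum_{\gamma\neq\delta,\ 0<|l_\gamma-l_\delta|\leq C/T,\ l_\gamma,l_\delta\leq L}l_\gamma^2 l_\delta^2 e^{-l_\gamma-l_\delta}$ by bounding the number of $\delta$ with $l_\delta$ in a window of width $2C/T$ about $l_\gamma$ by $\ll e^{l_\gamma}/(l_\gamma T)+e^{\nu l_\gamma}/l_\gamma$ via Theorem \ref{prime geo}, then summing over $\gamma$ by a second application of the prime geodesic theorem and summation by parts; this produces the main term $\asymp L^3/T$ plus an $O(1)$ remainder. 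In (iii), the constraint forces $l_\delta\approx(p/q)l_\gamma$ while the exponential weight is $\asymp e^{-p(u_1+u_2)l_\gamma}$, which beats the length count and gives $O(1)$. Collecting the three pieces yields $\mathbb{E}_n[\text{Off}^2]\ll L^3/T+L^2+O_L(1/n)$, and the proposition follows.

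I expect the main obstacle to be the counting in piece (ii): extracting the $L^3/T$ main term from $\sum l_\gamma^2 l_\delta^2 e^{-l_\gamma-l_\delta}$ over pairs of nearly equal length, and checking with care that the sub-main term $e^{\nu\ell}/\ell$ of the prime geodesic theorem — which is also precisely what controls the length multiplicities in piece (i) — contributes only to lower order. The rank-one bookkeeping and the estimates in pieces (i) and (iii) should be routine by comparison.
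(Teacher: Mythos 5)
Your overall strategy coincides with the paper's: Cauchy--Schwarz to reduce to $\mathbb{E}_n[\text{Off}^2]$, expansion of the square, using Theorem~\ref{main thm 2}/Corollary~\ref{main cor 2} to kill all configurations where the geodesic pairs differ (cost $O_L(1/n)$), Lemma~\ref{G lem} to produce the $G(a,c)G(b,d)$ weights, and the determinant/support-of-$\hat w$ argument to force $ad=bc$ for $L=o(T)$ and $T\gg1$. The one point where you genuinely diverge is in how the resulting sum over singular matrices is organized. The paper defines the set $\text{Sin}_L$ and splits into $\Sigma_1$ (the matrix $\left(\begin{smallmatrix}1&1\\1&1\end{smallmatrix}\right)$), $\Sigma_2$ (first row $(1,1)$, $c+d\geq3$), and $\Sigma_3$ (both $a+b,c+d\geq3$); it then bounds $\Sigma_3$ by parametrizing the column ratio $\lambda\in\mathbb{Q}_L$ and paying a crude factor $\#\mathbb{Q}_L\ll L^2$, arriving at $\Sigma_3\ll L^2$ and hence $\mathbb{E}_n[\text{Off}^2]\ll L^3/T+L^2$. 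You instead write $(a,b)=u_1(p,q)$, $(c,d)=u_2(p,q)$ with $\gcd(p,q)=1$ and split by the direction $(p,q)$ and by whether $l_\gamma=l_\delta$. This is a perfectly valid alternative decomposition, and in fact your estimates for the three pieces sum to $L^3/T+O(1)$, which is \emph{sharper} than the $L^3/T+L^2$ you declare at the end (the paper's $L^2$ is an artifact of the lossy $\#\mathbb{Q}_L$ count, not a real obstruction). This discrepancy in your own bookkeeping is harmless for the proposition, but it is an inconsistency worth flagging. Be aware that what you call ``routine'' in pieces (i) and (iii) does require carrying out the full sums over $(p,q)$ and $(u_1,u_2)$ with the $\sigma(p\gcd(u_1,u_2))\sigma(q\gcd(u_1,u_2))$ weights, and verifying the sub-main multiplicity bound $\#\{\gamma:l_\gamma=\ell\}\ll e^{\nu\ell}/\ell$ from Theorem~\ref{prime geo} really does leave only an $O(1)$ tail; these are the same summation-by-parts and geometric-series computations the paper carries out for $\Sigma_1,\Sigma_2,\Sigma_3$, so the plan is sound, but the work is not trivially ``routine.''
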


	\begin{prop}\label{diag bound}
		For $T\gg 1$ we have:
		\[
			\mathbb{E}_{n}\left|\frac{4\pi}{L^{2}}\textnormal{Diag}-\sigma_{\chi,\psi}^{2}\right| \ll \frac{1}{\sqrt{L}} + \sqrt{O_{L}(1/n)}.
		\]
	\end{prop}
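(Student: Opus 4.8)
The plan is to bound $\mathbb{E}_n\big|\tfrac{4\pi}{L^2}\mathrm{Diag}-\sigma_{\chi,\psi}^2\big|$ via Cauchy--Schwarz. Writing $Y=\tfrac{4\pi}{L^2}\mathrm{Diag}$, one has $\mathbb{E}_n|Y-\sigma_{\chi,\psi}^2|\le\sqrt{\mathbb{E}_n[(Y-\sigma_{\chi,\psi}^2)^2]}=\sqrt{(\mathbb{E}_n[Y]-\sigma_{\chi,\psi}^2)^2+\mathrm{Var}_n[Y]}$, so it suffices to show $|\mathbb{E}_n[Y]-\sigma_{\chi,\psi}^2|\ll L^{-1/2}+O_L(1/n)$ and $\mathrm{Var}_n[Y]\ll L^{-1}+O_L(1/n)$ (in fact we will obtain the stronger rates $L^{-2}$ and $L^{-4}$), which reduces matters to estimating $\mathbb{E}_n[Y]$ and $\mathbb{E}_n[Y^2]$. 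Recall from the proof that $\mathrm{Diag}=\tfrac{1}{2\pi}\sum_{\gamma\in\mathcal{P}_0,\,a\ge1}s(\gamma,a)^2U_n(\gamma^a)^2$, so $Y=\tfrac{2}{L^2}\sum_{\gamma,a}s(\gamma,a)^2U_n(\gamma^a)^2$ with $s(\gamma,a)^2=\tfrac{\mathfrak{R}(\chi(\gamma^a))^2 l_\gamma^2\hat{\psi}(al_\gamma/L)^2}{\sinh^2(al_\gamma/2)}$; the compact support of $\hat{\psi}$ makes this a \emph{finite} sum ($l_\gamma\le L$, $a\le L/\mathrm{Sys}(X)$), so all exchanges of $\lim_n$ with the sum are legitimate.

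For the mean: by Lemma~\ref{G lem} (in the quantitative form available from \cite{PZ, surfaces}), $\mathbb{E}_n[U_n(\gamma^a)^2]=G(a,a)+O_{\gamma,a}(1/n)=\sigma(a)+O_{\gamma,a}(1/n)$, hence $\mathbb{E}_n[Y]=\tfrac{2}{L^2}\sum_{\gamma,a}s(\gamma,a)^2\sigma(a)+O_L(1/n)$. I would then split $\mathfrak{R}(\chi(\gamma^a))^2=\tfrac12+\tfrac12\mathfrak{R}(\chi^2(\gamma^a))$. In the $\tfrac12$-part only $a=1$ matters ($a\ge2$ contributes $\ll L^{-2}\sum_{l_\gamma\le L}l_\gamma^2e^{-2l_\gamma}\ll L^{-2}$); using $1/\sinh^2(x/2)=4e^{-x}+O(e^{-2x})$, the prime geodesic theorem (Theorem~\ref{prime geo}), summation by parts, and the density $e^x/2x$ for $\mathcal{P}_0$, this part evaluates to $\tfrac{1}{L^2}\cdot 2\!\int_0^\infty x\,\hat{\psi}(x/L)^2dx=\int_{\mathbb{R}}|u|\hat{\psi}(u)^2du=\Sigma_{\mathrm{GUE}}^2(\psi)$, with error $O(L^{-2})$. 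For the $\tfrac12\mathfrak{R}(\chi^2(\gamma^a))$-part: if $\chi^2=1$ it reproduces the $\tfrac12$-part, upgrading $\Sigma_{\mathrm{GUE}}^2$ to $2\Sigma_{\mathrm{GUE}}^2=\Sigma_{\mathrm{GOE}}^2$; if $\chi^2\ne1$, expanding $\sigma(a)=\sum_{d\mid a}d$ and $1/\sinh^2(al_\gamma/2)=4\sum_{j\ge1}je^{-jal_\gamma}$ exhibits this part as a sum of characters $\chi^2(\gamma^k)$ against geometric weights $\ll l_\gamma^2e^{-kl_\gamma}$, and since the partial character sums are $\ll e^{\beta T}$ with $\beta<1$ by Proposition~\ref{prime chi geo}, summation by parts bounds it by $O(L^{-2})$. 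In either case $|\mathbb{E}_n[Y]-\sigma_{\chi,\psi}^2|\ll L^{-2}+O_L(1/n)$.

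For the variance: $\mathrm{Var}_n[Y]=\tfrac{4}{L^4}\sum_{(\gamma,a),(\delta,b)}s(\gamma,a)^2s(\delta,b)^2\,\mathrm{Cov}_n\!\big(U_n(\gamma^a)^2,U_n(\delta^b)^2\big)$. When $\gamma\ne\delta$, the asymptotic independence of Corollary~\ref{main cor 2} makes each covariance $O_{\gamma,\delta,a,b}(1/n)$, so this part is $O_L(1/n)$. When $\gamma=\delta$, Corollary~\ref{dist cor} shows $\mathrm{Cov}_n(U_n(\gamma^a)^2,U_n(\gamma^b)^2)$ converges to a limit $C(a,b)$ independent of $\gamma$, and a Cauchy--Schwarz argument in the spirit of Lemma~\ref{R bound} --- using that the moments of the limiting Poisson combinations $\sum_{d\mid a}dZ_{1/d}$ are polynomial in the parameters $1/d\le1$, together with $\sigma(a)\ll a^2$ --- gives $|C(a,b)|\ll a^2b^2$. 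Since $|s(\gamma,a)^2|\ll l_\gamma^2e^{-al_\gamma}$ and $\sum_{a\ge1}a^2e^{-al_\gamma}\ll_X e^{-l_\gamma}$, the $\gamma=\delta$ part is $\ll L^{-4}\sum_{l_\gamma\le L}l_\gamma^4e^{-2l_\gamma}\ll L^{-4}$ by summation by parts. Hence $\mathrm{Var}_n[Y]\ll L^{-4}+O_L(1/n)$, and combining with the mean estimate, $\mathbb{E}_n|Y-\sigma_{\chi,\psi}^2|\ll L^{-2}+\sqrt{O_L(1/n)}\ll L^{-1/2}+\sqrt{O_L(1/n)}$, which is the assertion.

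The main obstacle is the main-term evaluation: showing that the $a=1$ geodesic sum produces exactly $2\int_{\mathbb{R}}|x|\hat{\psi}(x)^2dx$ --- which requires pairing the prime geodesic theorem with the correct $\mathcal{P}_0$-density and tracking the $\Sigma_{\mathrm{GOE}}$ versus $\Sigma_{\mathrm{GUE}}$ dichotomy through $\mathfrak{R}(\chi(\gamma^a))^2$ --- and, for $\chi^2\ne1$, carrying the divisor weight $\sigma(a)$ while extracting the character cancellation of Proposition~\ref{prime chi geo}. A secondary technical point is the polynomial bound $|C(a,b)|\ll a^2b^2$ on the limiting covariance of $U_n(\gamma^a)^2$ and $U_n(\gamma^b)^2$, a one-level-up analogue of Lemma~\ref{R bound} needing the fourth central moments of the limiting Poisson combinations; once these are secured the rest is routine summation-by-parts bookkeeping.
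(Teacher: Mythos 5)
Your proof follows the same overall strategy as the paper: bound $\mathbb{E}_n\bigl|\tfrac{4\pi}{L^2}\mathrm{Diag}-\sigma_{\chi,\psi}^2\bigr|$ by Cauchy--Schwarz and then estimate the mean and the spread of $Y=\tfrac{4\pi}{L^2}\mathrm{Diag}$ separately, using Lemma~\ref{G lem}, Corollaries~\ref{main cor 2}/\ref{dist cor}, the prime geodesic theorem, Proposition~\ref{prime chi geo}, and summation by parts. The one genuine organizational difference is that you decompose $\mathbb{E}_n[(Y-\sigma^2)^2]=(\mathbb{E}_n[Y]-\sigma^2)^2+\mathrm{Var}_n[Y]$ and estimate the variance directly through the covariances $\mathrm{Cov}_n(U_n(\gamma^a)^2,U_n(\delta^b)^2)$, rather than computing $\mathbb{E}_n[\mathrm{Diag}]$ and $\mathbb{E}_n[\mathrm{Diag}^2]$ individually as the paper does (Lemmas~\ref{exp diag}, \ref{exp diag2}). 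That reorganization is cleaner and, because you exploit that the off-diagonal covariances are $O_L(1/n)$ and the diagonal limits $C(a,b)\ll a^2b^2$ after the exponential damping, actually yields the sharper bound $\mathrm{Var}_n[Y]\ll L^{-4}+O_L(1/n)$, whereas the paper's route only recovers $O(L^{-1})$ for $\mathrm{Var}_n[Y]$ after a lossy cancellation between the $O(L^3)$ errors of $\mathbb{E}_n[\mathrm{Diag}^2]$ and $\mathbb{E}_n[\mathrm{Diag}]^2$. Your variance bookkeeping is correct, including the $|C(a,b)|\ll a^2b^2$ estimate (matching the footnote in the paper's proof of Lemma~\ref{exp diag2}).

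The one place you overclaim is the mean estimate: you assert $|\mathbb{E}_n[Y]-\sigma_{\chi,\psi}^2|\ll L^{-2}+O_L(1/n)$, but the actual error in the $a=1$ geodesic sum is $O(L^{-1})$, not $O(L^{-2})$. The obstruction is the summation-by-parts remainder: after Abel summation with density $\tfrac{e^x}{2x}(1+O(1/x))$, the derivative piece contributes $\int_{\mathrm{Sys}(X)}^L\hat\psi^2(x/L)\,dx\asymp L$ and the PGT error $O(1/x)$ contributes another $\int x\hat\psi^2(x/L)\cdot O(1/x)\,dx\asymp L$ (these are the $O(L)$ terms in the paper's $\Theta_1=L^2 r_\chi\Sigma_{\mathrm{GOE}}^2(\psi)+O(L)+O_L(1/n)$), so after dividing by $L^2/2$ one gets $O(L^{-1})$. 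This does not affect your conclusion, since $O(L^{-1})$ still dominates $\sqrt{\mathrm{Var}_n[Y]}$ and sits comfortably below the target $L^{-1/2}$; but the claimed rate should be corrected.
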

	We claim that Theorem \ref{almost sure} follows from the above propositions. To see this, recall that:
	\[
		\mathbb{E}_{n}\left|\mathbb{V}_{T,L,n}-\sigma_{\chi,\psi}^{2}\right|\leq\frac{4\pi}{L^{2}}\mathbb{E}_{n}\left|\text{Off}\right|+\mathbb{E}_{n}\left|\frac{4\pi}{L^{2}}\text{Diag}-\sigma_{\chi,\psi}^{2}\right|.
	\]
	Plugging in the bounds from Proposition \ref{off bound} and Proposition \ref{diag bound} we get:
	\begin{multline*}
		\mathbb{E}_{n}\left|\mathbb{V}_{T,L,n}-\sigma_{\chi,\psi}^{2}\right|\ll \\
		\sqrt{\frac{1}{LT}}+\frac{1}{L}+\frac{1}{L^{2}}\sqrt{O_{L}(1/n)}+ \frac{1}{\sqrt{L}} + \sqrt{O_{L}(1/n)},
	\end{multline*}
	as long as $L=o(T)$. Therefore:
	\[
		\underset{n\to\infty}{\limsup}\,\mathbb{E}_{n}\left|\mathbb{V}_{T,L,n}-\sigma_{\chi,\psi}^{2}\right|\ll \frac{1}{\sqrt{L}},
	\]
	so that:
	\[
		\underset{\begin{smallmatrix}L,T\to\infty\\L=o(T)\end{smallmatrix}}{\lim}\underset{n\to\infty}{\limsup}\,\mathbb{P}_{n}\left(\left|\mathbb{V}_{T,L,n}-\sigma_{\chi,\psi}^{2}\right|\geq\epsilon\right)=0,
	\]
	for any $\frac{1}{\epsilon} = o\left(\sqrt{L}\right)$.
\end{proof}

For the rest of the proof, let $\mathcal{P}_{0}^{\leq L}$ denote the set of elements $\gamma$ of $\mathcal{P}_{0}$ for which $l_{\gamma}\leq L$.

\section{Bounding $\mathbb{E}_{n}|\text{Off}|$}
Using the Cauchy-Schwartz inequality we get:
\[
	\mathbb{E}_{n}\left|\text{Off}\right|\leq \sqrt{\mathbb{E}_{n}\left[\text{Off}^{2}\right]}.
\]
Expanding $\text{Off}^{2}$ we see that:
{\small{\[
	\text{Off}^{2} =\\
	\sum_{\substack{\gamma\neq\delta,\alpha\neq\beta\\ a,b,c,d\geq1}}f_{n}(\gamma,a)f_{n}(\delta,b)f_{n}(\alpha,c)f_{n}(\beta,d)\hat{w}\left(T(al_{\gamma}-bl_{\delta})\right)\hat{w}\left(T(cl_{\alpha}-dl_{\beta})\right).
\]}}

Corollary \ref{main cor 2} implies that if $\{\gamma,\delta\}\neq\{\alpha,\beta\}$ then:
\[
	\mathbb{E}_{n}\left[U_{n}(\gamma^{a})U_{n}(\delta^{b})U_{n}(\alpha^{c})U_{n}(\beta^{d})\right] = O_{\gamma,\delta,\alpha,\beta,a,b,c,d}(1/n).
\]
On the flipside, if $\{\gamma,\delta\}=\{\alpha,\beta\}$, say w.l.o.g. that $\gamma = \alpha$ and $\delta = \beta$, then we have:
\begin{multline*}
	\mathbb{E}_{n}\left[U_{n}(\gamma^{a})U_{n}(\delta^{b})U_{n}(\gamma^{c})U_{n}(\delta^{d})\right]=\\
	\underset{n\to\infty}{\lim}\mathbb{E}_{n}\left[U_{n}(\gamma^{a})U_{n}(\gamma^{c})\right]\underset{n\to\infty}{\lim}\mathbb{E}_{n}\left[U_{n}(\delta^{b})U_{n}(\delta^{d})\right] + O_{\gamma,\delta,a,b,c,d}(1/n).
\end{multline*}
Using Lemma \ref{G lem} yields:
\[
	\mathbb{E}_{n}\left[U_{n}(\gamma^{a})U_{n}(\delta^{b})U_{n}(\gamma^{c})U_{n}(\delta^{d})\right] =
	G(a,c)G(b,d) + O_{\gamma,\delta,a,b,c,d}(1/n).
\]

Due to the compact support of $\hat{\psi}$, in $\text{Off}^{2}$ we are summing over all \newline $\gamma,\delta,\alpha,\beta\in \mathcal{P}_{0}^{\leq L}$ and $1\leq a,b,c,d\leq L$.
Thus, if $\{\gamma,\delta\}\neq\{\alpha,\beta\}$ then the implied constants in:
\[
	\mathbb{E}_{n}\left[U_{n}(\gamma^{a})U_{n}(\delta^{b})U_{n}(\alpha^{c})U_{n}(\beta^{d})\right] = O_{\gamma,\delta,\alpha,\beta,a,b,c,d}(1/n),
\]
are uniformly bounded as a function of $L$.
As such:
\[
	\begin{split}
		\mathbb{E}_{n}\left[\text{Off}^{2}\right] & = \sum_{\substack{\gamma\neq\delta,\alpha\neq\beta : \{\gamma,\delta\}=\{\alpha,\beta\} \\a,b,c,d\geq 1}} + \sum_{\substack{\gamma\neq\delta,\alpha\neq\beta : \{\gamma,\delta\}\neq\{\alpha,\beta\}\\a,b,c,d\geq 1}}\\
		                                          & = \sum_{\substack{\gamma\neq\delta,\alpha\neq\beta : \{\gamma,\delta\}=\{\alpha,\beta\} \\a,b,c,d\geq 1}} + O_{L}(1/n)\\
		                                          & \ll \sum_{\substack{\gamma =\alpha\neq\delta =\beta                                     \\ a,b,c,d\geq 1}} + O_{L}(1/n).
	\end{split}
\]

Similar reasoning shows that the implied constants in:
\[
	\mathbb{E}_{n}\left[U_{n}(\gamma^{a})U_{n}(\delta^{b})U_{n}(\gamma^{c})U_{n}(\delta^{d})\right] =
	G(a,c)G(b,d) + O_{\gamma,\delta,a,b,c,d}(1/n),
\]
are uniformly bounded by a function of $L$.
Letting:
{\footnotesize{\[
	M =
	\sum_{\substack{\gamma\neq\delta\\a,b,c,d\geq 1}}G(a,c)G(b,d)s(\gamma,a)s(\gamma,c)s(\delta,b)s(\delta,d)\hat{w}\left(T(al_{\gamma}-bl_{\delta})\right)\hat{w}\left(T(cl_{\gamma}-dl_{\delta})\right),
\]}}
we see that:
\[
	\sum_{\substack{\gamma =\alpha\neq\delta =\beta \\ a,b,c,d\geq 1}}  \ll M + O_{L}(1/n).
\]
All in all:
\[
	\mathbb{E}_{n}\left[\text{Off}^{2}\right] \ll
	M + O_{L}(1/n).
\]

Let us now analyze the sum $M$. Firstly, notice that if:
\[
	\hat{w}\left(T(al_{\gamma}-bl_{\delta})\right)\hat{w}\left(T(cl_{\gamma}-dl_{\delta})\right) \neq 0,
\]
then, due to the compact support of $\hat{w}$, we must have:
\[
	\frac{l_\gamma}{l_\delta} = \frac{b}{a} + \frac{1}{al_{\delta}}O\left(\frac{1}{T}\right) = \frac{d}{c}+ \frac{1}{cl_{\delta}}O\left(\frac{1}{T}\right).
\]
This implies:
\[
	\frac{ad-bc}{ac}=\frac{1}{al_{\delta}}O\left(\frac{1}{T}\right)-\frac{1}{cl_{\delta}}O\left(\frac{1}{T}\right),
\]
or:
\[
	|ad-bc|=\left|\frac{c}{l_{\delta}}O\left(\frac{1}{T}\right)-\frac{a}{l_{\delta}}O\left(\frac{1}{T}\right)\right|.
\]
Recall that we are summing over $1\leq a,b,c,d\leq L$ so that:
\[
	|ad-bc|=\left|\frac{c}{l_{\delta}}O\left(\frac{1}{T}\right)-\frac{a}{l_{\delta}}O\left(\frac{1}{T}\right)\right|\ll \frac{L}{T}.
\]
Choosing $L = o(T)$ implies that for $T\gg 1$, the only options for $a,b,c,d$ are the ones for which $ad-bc = 0$.
Set:
\[
	\text{Sin}_{L} = \left\{\left(\begin{matrix} a&b\\ c&d \end{matrix}\right) \in M_{2}(\mathbb{Z}): 1\leq a,b,c,d\leq L\ ,ad-bc = 0 \right\}.
\]
We have shown that for $T\gg 1$ and $L=o(T)$:
{\footnotesize{
\[
	M =
	\sum_{\substack{\gamma\neq\delta \\ \left(\begin{smallmatrix} a&b\\c&d \end{smallmatrix}\right)\in \text{Sin}_{L}}} G(a,c)G(b,d)s(\gamma,a)s(\gamma,c)s(\delta,b)s(\delta,d)\hat{w}\left(T(al_{\gamma}-bl_{\delta})\right)\hat{w}\left(T(cl_{\gamma}-dl_{\delta})\right).
\]}}

Using the bound:
\[
	|s(\gamma,k)| = \left|\frac{\mathfrak{R}(\chi(\gamma^{k}))l_{\gamma}\hat{\psi}(kl_{\gamma}/L)}{\sinh(kl_{\gamma}/2)}\right| \ll l_{\gamma}e^{-\frac{kl_{\gamma}}{2}},
\]
yields:
{\footnotesize{
	\[
		\begin{split}
			|M| & \ll \sum_{\substack{\gamma\neq\delta \\ \left(\begin{smallmatrix} a&b\\c&d \end{smallmatrix}\right)\in \text{Sin}_{L}}} G(a,c)G(b,d) l_{\gamma}^{2} l_{\delta}^{2}e^{-\frac{(a+c)l_{\gamma}}{2}-\frac{(b+d)l_{\delta}}{2}}\hat{w}\left(T(al_{\gamma}-bl_{\delta})\right)\hat{w}\left(T(cl_{\gamma}-dl_{\delta})\right)\\
			    & =\sum_{\substack{\gamma\neq\delta    \\ \left(\begin{smallmatrix} a&b\\c&d \end{smallmatrix}\right) = \left(\begin{smallmatrix} 1&1\\1&1 \end{smallmatrix}\right)}} +
			\sum_{\substack{\gamma\neq\delta           \\ \left(\begin{smallmatrix} 1&1\\c&d \end{smallmatrix}\right) \in \text{Sin}_{L} \\ c+d\geq 3}} +
			\sum_{\substack{\gamma\neq\delta           \\ \left(\begin{smallmatrix} a&b\\1&1 \end{smallmatrix}\right) \in \text{Sin}_{L} \\ a+b\geq 3}}+
			\sum_{\substack{\gamma\neq\delta           \\ \left(\begin{smallmatrix} a&b\\c&d \end{smallmatrix}\right) \in \text{Sin}_{L} \\ a+b,c+d\geq 3}}\\
			    & \ll \sum_{\substack{\gamma\neq\delta \\ \left(\begin{smallmatrix} a&b\\c&d \end{smallmatrix}\right) = \left(\begin{smallmatrix} 1&1\\1&1 \end{smallmatrix}\right)}} +
			\sum_{\substack{\gamma\neq\delta           \\ \left(\begin{smallmatrix} 1&1\\c&d \end{smallmatrix}\right) \in \text{Sin}_{L} \\ c+d\geq 3}} +
			\sum_{\substack{\gamma\neq\delta           \\ \left(\begin{smallmatrix} a&b\\c&d \end{smallmatrix}\right) \in \text{Sin}_{L} \\ a+b,c+d\geq 3}}.
		\end{split}
	\]}}
Denote the last three sums as $\Sigma_{1}, \Sigma_{2}$, and $\Sigma_{3}$ respectively.
That is:
\[
	\Sigma_{1}\stackrel{def}{=}\sum_{\substack{\gamma\neq\delta \\ \left(\begin{smallmatrix} a&b\\c&d \end{smallmatrix}\right) = \left(\begin{smallmatrix} 1&1\\1&1 \end{smallmatrix}\right)}},
	\Sigma_{2}\stackrel{def}{=}\sum_{\substack{\gamma\neq\delta  \\ \left(\begin{smallmatrix} 1&1\\c&d \end{smallmatrix}\right) \in \text{Sin}_{L} \\ c+d\geq 3}},
	\Sigma_{3}\stackrel{def}{=}\sum_{\substack{\gamma\neq\delta \\ \left(\begin{smallmatrix} a&b\\c&d \end{smallmatrix}\right) \in \text{Sin}_{L} \\ a+b,c+d\geq 3}},
\]
where for all three sums $\gamma$ and $\delta$ are elements of $\mathcal{P}_{0}^{\leq L}$.
Thus for $T\gg 1$ and $L=o(T)$ we have:
\[
	\mathbb{E}_{n}\left[\text{Off}^{2}\right] \ll \Sigma_{1} + \Sigma_{2} + \Sigma_{3} + O_{L}(1/n).
\]

To finish bounding $\mathbb{E}_{n}\left[\text{Off}^{2}\right]$ we bound each $\Sigma_{i}$ individually.
\begin{lem}\label{sigma1 bound}
	If $L=o(T)$ then for $T\gg 1$ we have:
	\[
		\Sigma_{1} \ll \frac{L^{3}}{T}+1.
	\]
\end{lem}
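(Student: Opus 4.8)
The plan is to bound $\Sigma_1$, which is the sum over distinct $\gamma,\delta \in \mathcal{P}_0^{\leq L}$ with the matrix $\left(\begin{smallmatrix} a&b\\c&d\end{smallmatrix}\right)$ equal to the all-ones matrix, i.e.\ $a=b=c=d=1$. In this case $G(a,c) = G(1,1) = \sigma(\gcd(1,1)) = 1$ and similarly $G(b,d)=1$, so the summand reduces to
\[
  l_\gamma^2 l_\delta^2 e^{-l_\gamma - l_\delta}\,\hat{w}\!\left(T(l_\gamma - l_\delta)\right)^2,
\]
and we must estimate
\[
  \Sigma_1 \ll \sum_{\substack{\gamma \neq \delta \\ \gamma,\delta \in \mathcal{P}_0^{\leq L}}} l_\gamma^2 l_\delta^2 e^{-l_\gamma - l_\delta}\,\hat{w}\!\left(T(l_\gamma - l_\delta)\right)^2.
\]

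\textbf{Key steps.} First I would exploit the compact support of $\hat{w}$: there is a constant $C$ with $\hat{w}$ supported in $[-C,C]$, so the only pairs contributing satisfy $|l_\gamma - l_\delta| \leq C/T$. Since $\hat{w}$ is bounded, $\hat{w}(T(l_\gamma-l_\delta))^2 \ll 1$, and on the support $l_\delta = l_\gamma + O(1/T)$, so $l_\delta^2 e^{-l_\delta} \ll l_\gamma^2 e^{-l_\gamma}$ uniformly (for $T\gg 1$). Hence
\[
  \Sigma_1 \ll \sum_{\gamma \in \mathcal{P}_0^{\leq L}} l_\gamma^4 e^{-2 l_\gamma}\, \#\{\delta \in \mathcal{P}_0 : |l_\delta - l_\gamma| \leq C/T\}.
\]
The inner count is controlled by the prime geodesic theorem (Theorem \ref{prime geo}): the number of primitive geodesics with length in a window $[x - C/T, x + C/T]$ is, by summation by parts from $N^0(T) = \mathrm{Li}(e^T) + O(e^{\nu T}/T)$, of order $\frac{e^x}{x}\cdot\frac{1}{T} + O(e^{\nu x})$ for $x \leq L$ (the error term being dominated once $x$ is large, and crudely bounded by $e^x$ for small $x$). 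Plugging this in, the main contribution is
\[
  \sum_{\gamma \in \mathcal{P}_0^{\leq L}} l_\gamma^4 e^{-2 l_\gamma}\left(\frac{e^{l_\gamma}}{l_\gamma T} + O(e^{\nu l_\gamma})\right) \ll \frac{1}{T}\sum_{\gamma \in \mathcal{P}_0^{\leq L}} l_\gamma^3 e^{-l_\gamma} + \sum_{\gamma \in \mathcal{P}_0^{\leq L}} l_\gamma^4 e^{-(2-\nu) l_\gamma}.
\]
A second application of the prime geodesic theorem plus summation by parts (exactly as in the proof of Theorem \ref{B bound}, where $\sum_{l_\gamma \leq L} l_\gamma^r e^{-\frac{r}{2}l_\gamma}$ was evaluated) shows $\sum_{\gamma \in \mathcal{P}_0^{\leq L}} l_\gamma^3 e^{-l_\gamma} \asymp \sum_{l_\gamma \leq L} l_\gamma^3 e^{-l_\gamma} \ll L^3$ (the integrand $\asymp x^2$, integrated to $L$), while the second sum converges as $L \to \infty$ since $2 - \nu > 1$. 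This yields $\Sigma_1 \ll \frac{L^3}{T} + 1$, as claimed.

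\textbf{Main obstacle.} The delicate point is making the window-count rigorous and uniform in $\gamma$: Theorem \ref{prime geo} is an asymptotic statement as $T \to \infty$, so one must carefully pass from $N^0(x+C/T) - N^0(x-C/T)$ to $\frac{2C}{T}\cdot\frac{e^x}{x}(1 + o(1))$ while tracking the error term $O(e^{\nu x}/x)$, and handle small $x$ (where the error term is not small relative to the main term) by a crude bound such as $\#\{\delta : l_\delta \leq x_0\} = O_{x_0}(1)$. Once that counting estimate is in hand with the right uniformity, the remaining summation-by-parts manipulations are routine and parallel those already carried out in Section \ref{B bound proof}.
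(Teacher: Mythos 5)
Your proposal is correct and follows essentially the same path as the paper: reduce via $G(1,1)=1$, restrict to $|l_\gamma-l_\delta|\ll 1/T$ using the compact support of $\hat{w}$, invoke Theorem \ref{prime geo} to bound the short-window geodesic count, and finish by summation by parts. The only cosmetic difference is that you write the error term in the window count as $O(e^{\nu x})$ rather than $O(e^{\nu x}/x)$, which gives $\sum l_\gamma^4 e^{-(2-\nu)l_\gamma}$ in place of $\sum l_\gamma^3 e^{-(2-\nu)l_\gamma}$; since $2-\nu>1$ this still converges, so the conclusion is unaffected.
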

\begin{lem}\label{sigma2 bound}
	If $L=o(T)$ then for $T\gg 1$ we have:
	\[
		\Sigma_{2} \ll 1.
	\]
\end{lem}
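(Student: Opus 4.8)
The plan is to exploit that, unlike for $\Sigma_{1}$, the constraints defining $\Sigma_{2}$ force so much exponential decay in the two geodesic lengths that the bound $\Sigma_{2}\ll 1$ falls out of the prime geodesic theorem alone, with no recourse to the localization provided by $\hat{w}$.

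First I would unpack the summation. In $\Sigma_{2}$ the top row of the matrix equals $(a,b)=(1,1)$, so the condition $\left(\begin{smallmatrix}1&1\\c&d\end{smallmatrix}\right)\in\text{Sin}_{L}$ is exactly $d-c=0$, and the extra restriction $c+d\geq 3$ then forces $c=d\geq 2$. Since $G(1,c)=\sigma(\gcd(1,c))=1$, substituting $a=b=1$, $d=c$ into the bound on $|M|$ that produced the three sums $\Sigma_{1},\Sigma_{2},\Sigma_{3}$ gives
\[
\Sigma_{2}\ll\sum_{\substack{\gamma\neq\delta\\ \gamma,\delta\in\mathcal{P}_{0}^{\leq L}\\ 2\leq c\leq L}}l_{\gamma}^{2}l_{\delta}^{2}\,e^{-\frac{(1+c)(l_{\gamma}+l_{\delta})}{2}}\left|\hat{w}\left(T(l_{\gamma}-l_{\delta})\right)\right|\left|\hat{w}\left(Tc(l_{\gamma}-l_{\delta})\right)\right|.
\]
Next I would bound the two $\hat{w}$-factors crudely by $\|\hat{w}\|_{\infty}\ll 1$, sum the geometric series in $c$ — using $l_{\gamma}+l_{\delta}\geq 2\,\text{Sys}(X)>0$, so that $\sum_{c\geq 2}e^{-(1+c)(l_{\gamma}+l_{\delta})/2}\ll e^{-\frac{3}{2}(l_{\gamma}+l_{\delta})}$ uniformly — and finally drop the constraints $\gamma\neq\delta$ and $l_{\gamma},l_{\delta}\leq L$, arriving at
\[
\Sigma_{2}\ll\sum_{\gamma,\delta\in\mathcal{P}_{0}}l_{\gamma}^{2}l_{\delta}^{2}\,e^{-\frac{3}{2}(l_{\gamma}+l_{\delta})}=\left(\sum_{\gamma\in\mathcal{P}_{0}}l_{\gamma}^{2}\,e^{-\frac{3}{2}l_{\gamma}}\right)^{2}.
\]
It then remains to note that $\sum_{\gamma\in\mathcal{P}_{0}}l_{\gamma}^{2}e^{-\frac{3}{2}l_{\gamma}}<\infty$, which follows from Theorem \ref{prime geo} and summation by parts exactly as in Section \ref{B bound proof}: the counting function of primitive non-oriented lengths has density $\asymp e^{x}/(2x)$, so
\[
\sum_{\substack{\gamma\in\mathcal{P}_{0}\\ l_{\gamma}\leq L}}l_{\gamma}^{2}e^{-\frac{3}{2}l_{\gamma}}\asymp\int_{\text{Sys}(X)}^{L}x^{2}e^{-\frac{3}{2}x}\,\frac{e^{x}}{2x}\,dx=\frac{1}{2}\int_{\text{Sys}(X)}^{L}x\,e^{-\frac{x}{2}}\,dx,
\]
which stays bounded as $L\to\infty$. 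Hence $\Sigma_{2}\ll 1$.

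I do not expect a genuine obstacle here: the crux is that the constraint $c=d\geq 2$ contributes a factor $e^{-\frac{3}{2}(l_{\gamma}+l_{\delta})}$, and the decay rate $3/2$ comfortably beats the exponential growth rate $1$ of the number of closed geodesics, so the resulting double sum converges absolutely with room to spare; the hypothesis $L=o(T)$ is only used upstream, to reduce $M$ to a sum over $\text{Sin}_{L}$. The only mild care required is the uniformity of the geometric series in $c$ (supplied by the systole lower bound) and the summation by parts against the prime geodesic theorem, both of which are routine and of the same flavor as the estimates already carried out for $B(\underline{r})$ in Section \ref{B bound proof}.
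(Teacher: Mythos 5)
Your proof is correct, and it takes a genuinely different (and simpler) route from the paper's. The paper's proof of Lemma \ref{sigma2 bound} reuses the machinery it needs for $\Sigma_{1}$: it exploits the compact support of $\hat{w}$ to localize $l_{\delta}$ in an interval of size $O(1/(cT))$ around $l_{\gamma}$, then uses Theorem \ref{prime geo} to count the number of such $\delta$ (getting a main term $\ll \frac{1}{T}\frac{e^{l_{\gamma}}}{l_{\gamma}}$ plus an error term $\ll \frac{e^{\nu l_{\gamma}}}{l_{\gamma}}$), and finally sums the geometric series in $c \geq 2$ before integrating against the geodesic density. You instead discard the $\hat{w}$-localization entirely, bound $\hat{w}$ by its sup norm, sum the geometric series in $c$ first to extract the factor $e^{-\frac{3}{2}(l_{\gamma}+l_{\delta})}$, and then observe that the resulting double sum factors and is absolutely convergent by the prime geodesic theorem since the decay rate $3/2$ beats the growth rate $1$. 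Your argument is shorter, avoids the short-interval counting, and correctly isolates the real reason $\Sigma_{2}\ll 1$: the constraint $c=d\geq 2$ alone provides enough exponential decay. As you note, the hypothesis $L=o(T)$ is only used upstream to reduce $M$ to a sum over $\text{Sin}_{L}$, and plays no role in bounding $\Sigma_{2}$; the paper's proof obscures this slightly by retaining the $\hat{w}$-localization. The one point to keep an eye on — which you handle correctly — is the uniformity of $\sum_{c\geq 2}e^{-c(l_{\gamma}+l_{\delta})/2}\ll e^{-(l_{\gamma}+l_{\delta})}$, supplied by the systole lower bound.
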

\begin{lem}\label{sigma3 bound}
	If $L=o(T)$ then for $T\gg 1$ we have:
	\[
		\Sigma_{3} \ll L^{2}.
	\]
\end{lem}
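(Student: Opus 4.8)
The plan is to estimate
\[
\Sigma_{3}=\sum_{\substack{\gamma\neq\delta\\ \left(\begin{smallmatrix} a&b\\c&d \end{smallmatrix}\right)\in \text{Sin}_{L}\\ a+b,c+d\geq 3}} G(a,c)G(b,d)\,l_{\gamma}^{2}l_{\delta}^{2}e^{-\frac{(a+c)l_{\gamma}}{2}-\frac{(b+d)l_{\delta}}{2}}\hat{w}\!\left(T(al_{\gamma}-bl_{\delta})\right)\hat{w}\!\left(T(cl_{\gamma}-dl_{\delta})\right)
\]
by first bounding the $\hat w$ factors trivially by $\|\hat w\|_\infty^2 \ll 1$ and discarding the constraint $\gamma\neq\delta$, so that the two geodesic sums decouple:
\[
\Sigma_{3}\ll \sum_{\substack{\left(\begin{smallmatrix} a&b\\c&d \end{smallmatrix}\right)\in \text{Sin}_{L}\\ a+b,\,c+d\geq 3}} G(a,c)G(b,d)\left(\sum_{\gamma\in\mathcal{P}_{0}^{\leq L}} l_{\gamma}^{2}e^{-\frac{(a+c)l_{\gamma}}{2}}\right)\left(\sum_{\delta\in\mathcal{P}_{0}^{\leq L}} l_{\delta}^{2}e^{-\frac{(b+d)l_{\delta}}{2}}\right).
\]
By the prime geodesic theorem (Theorem \ref{prime geo}) and summation by parts, exactly as in the proof of Theorem \ref{B bound}, the inner geodesic sum $\sum_{\gamma\in\mathcal{P}_{0}^{\leq L}}l_{\gamma}^{2}e^{-ml_{\gamma}/2}$ is $\asymp L^{2}$ when $m=2$ and is $O_{m}(1)$ — in fact bounded by a constant times $e^{-(m/2-1)\,\mathrm{Sys}(X)}$, summably so — when $m\geq 3$. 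Since in $\Sigma_{3}$ we have imposed $a+c\geq 3$ or $b+d\geq 3$ (the condition $a+b,c+d\geq 3$ forces at least one of the two exponents $a+c$, $b+d$ to exceed $2$, because all entries are $\geq 1$ and $ad=bc$ rules out the configuration with $a+c=b+d=2$, i.e. all entries equal to $1$), each term of the $a,b,c,d$ sum contributes at most $O(L^{2})$ from one geodesic sum times $O(1)$ from the other.

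Next I would control the remaining finite sum over $\text{Sin}_{L}$. Writing $m_{1}=a+c$ and $m_{2}=b+d$, the key point is that for fixed $m_{1},m_{2}$ the number of quadruples $(a,b,c,d)\in\text{Sin}_{L}$ with those column sums is bounded (a singular $2\times 2$ integer matrix with prescribed row/column data has few realizations), and $G(a,c)\leq\sigma(\gcd(a,c))\leq\min(\sigma(a),\sigma(c))\ll a^{2}$, similarly $G(b,d)\ll b^{2}$, while the decaying exponentials $e^{-(m_{1}/2-1)l_{\gamma}}$, $e^{-(m_{2}/2-1)l_{\delta}}$ summed against the $l^{2}$ weight beat the polynomial growth in the entries once one of $m_{1},m_{2}$ is $\geq 3$. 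Concretely I would split according to which of the two exponents equals $2$: if $b+d=2$ (so $b=d=1$) then necessarily $a=c$ by singularity and $a\geq 2$, giving a single geodesic sum $\asymp L^{2}$ in $\delta$ times $\sum_{a\geq 2}\sigma(a)\,O(1)\cdot(\text{geodesic sum with exponent }2a)$ which is $O(1)$; the symmetric case $a+c=2$ is identical; and if both $a+c\geq 3$ and $b+d\geq 3$ both geodesic sums are $O(1)$ and the whole thing is $O(1)$. Summing the three cases gives $\Sigma_{3}\ll L^{2}$.

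The main obstacle I anticipate is the bookkeeping over $\text{Sin}_{L}$: one must verify that the singularity condition $ad=bc$ together with $a+b,c+d\geq 3$ genuinely excludes the all-ones matrix (the only matrix giving both exponents equal to $2$), and then show that the multiplicity of $\text{Sin}_{L}$-matrices with given $(a+c,b+d)$, weighted by $G(a,c)G(b,d)\ll a^{2}b^{2}$, is tamed by the exponential decay — i.e. that $\sum_{a\geq 2}a^{2}\sum_{\delta}l_{\delta}^{2}e^{-a l_{\delta}}$ and its analogues converge. This is exactly the type of computation carried out in Section \ref{B bound proof}, so the estimates $\sum_{\gamma}l_{\gamma}^{r}e^{-rl_{\gamma}/2}\asymp \tfrac14 L^{2}\,[r=2]+O_{r}(1)$ can be quoted, and the convergence of the entry-sums reduces to the elementary bound $\sum_{a\geq 1}a^{2}e^{-a t}\ll t^{-3}$ for small $t$ together with $\mathrm{Sys}(X)>0$ bounding $l_{\delta}$ away from $0$, which makes the relevant series geometric.
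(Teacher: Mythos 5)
Your proposal is correct, and it takes a genuinely different route from the paper. The paper keeps the $\hat{w}$-weights in play: it parametrizes the singular matrix by the column ratio $\lambda$ (so $\left(\begin{smallmatrix}b\\d\end{smallmatrix}\right)=\lambda\left(\begin{smallmatrix}a\\c\end{smallmatrix}\right)$, WLOG $\lambda\geq 1$), uses the compact support of $\hat{w}$ to force $l_\gamma=\lambda l_\delta+\tfrac{1}{a}O(1/T)$, counts the admissible $\gamma$ via Theorem \ref{prime geo}, and then sums over $\delta$, $\lambda$, $a$, $c$ by splitting into $\lambda<2$ and $\lambda\geq 2$. You instead bound $\hat{w}$ trivially, drop the $\gamma\neq\delta$ coupling, and let the two geodesic sums decouple; the observation that the constraints $a+b,c+d\geq 3$ and $ad=bc$ force one of the column sums $a+c$, $b+d$ to be $\geq 3$ then gives at least one factor of exponential decay in the geodesic length, so one factor contributes $\asymp L^2$ and the other is $O(1)$. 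Both arrive at $\Sigma_3\ll L^2$. Your method is more elementary for this particular piece (no need to count close pairs of geodesic lengths), but it would fail for $\Sigma_1$ — where both column sums equal $2$ — which is why the paper's $\hat{w}$-retaining machinery is set up uniformly for all three $\Sigma_i$. Two small nits: (i) your parenthetical attributes the exclusion of the all-ones matrix to $ad=bc$, but it is the row condition $a+b\geq 3$ that excludes it (singularity is actually satisfied by all ones); (ii) in the case $b=d=1$, $a=c\geq 2$, the exponent in the $\gamma$-sum is $(a+c)/2=a$, not $2a$, though this has no effect on convergence. You also correctly flag the one place that needs a real argument — the multiplicity of matrices in $\text{Sin}_L$ with prescribed column sums $(m_1,m_2)$ — and this is indeed controllable: singularity $am_2=bm_1$ forces $b$ to be determined by $a$ up to a $\gcd(m_1,m_2)$-fold ambiguity, which is polynomial and swamped by the exponential decay in $m_1,m_2$, so the sketch closes.
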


The above lemmas give:
\[
	\mathbb{E}_{n}\left[\text{Off}^{2}\right] \ll \Sigma_{1} + \Sigma_{2} + \Sigma_{3}+ O_{L}(1/n) \ll
	\frac{L^{3}}{T} + L^{2} + O_{L}(1/n),
\]
so that:
\[
	\mathbb{E}_{n}\left|\text{Off}\right| \ll \sqrt{\frac{L^{3}}{T}} + L + \sqrt{O_{L}(1/n)},
\]
which is exactly Proposition \ref{off bound}.

\subsection{Bounding $\Sigma_{1}$}
Let us tackle $\Sigma_{1}$, using $G(1,1) = 1$ we have:
\[
	\Sigma_{1}=\sum_{\substack{\gamma\neq\delta \\ \left(\begin{smallmatrix} a&b\\c&d \end{smallmatrix}\right) = \left(\begin{smallmatrix} 1&1\\1&1 \end{smallmatrix}\right)}}
	=\sum_{\gamma\neq\delta} l_{\gamma}^{2} l_{\delta}^{2}e^{-l_{\gamma}-l_{\delta}}\hat{w}\left(T(l_{\gamma}-l_{\delta})\right)^{2},
\]
where we sum over $\gamma$ and $\delta$ that are elements of $\mathcal{P}_{0}^{\leq L}$.

A simple bound is:
\[
	\Sigma_{1}
	\ll\sum_{\substack{\gamma\neq\delta \\ \left|l_{\gamma}-l_{\delta}\right|\ll 1/T}} l_{\gamma}^{2} l_{\delta}^{2}e^{-l_{\gamma}-l_{\delta}}.
\]
If $\left|l_{\gamma}-l_{\delta}\right|\ll 1/T$ then $l_{\delta} = l_{\gamma} + O\left(\frac{1}{T}\right)$, thus:
\[
	\sum_{\substack{\gamma\neq\delta \\ \left|l_{\gamma}-l_{\delta}\right|\ll 1/T}} l_{\gamma}^{2} l_{\delta}^{2}e^{-l_{\gamma}-l_{\delta}}\ll
	\sum_{\gamma\in\mathcal{P}_{0}^{\leq L}} l_{\gamma}^{4}e^{-2l_{\gamma}}\#\left\{\delta :l_{\delta} = l_{\gamma} + O\left(\frac{1}{T}\right)\right\}.
\]
Using Theorem \ref{prime geo} we have the bound:
\[
	\#\left\{\delta :l_{\delta} = l_{\gamma} + O\left(\frac{1}{T}\right)\right\} \ll \frac{1}{T}\frac{e^{l_{\gamma}}}{l_{\gamma}} +\frac{e^{\nu l_{\gamma}}}{l_{\gamma}},
\]
for $3/4\leq \nu < 1$ dependent on $X$.  Plugging this back in we get:
\begin{multline*}
	\sum_{\gamma\in\mathcal{P}_{0}^{\leq L}} l_{\gamma}^{4}e^{-2l_{\gamma}}\#\left\{\delta :l_{\delta} = l_{\gamma} + O\left(\frac{1}{T}\right)\right\} \ll\\
	\sum_{\gamma\in\mathcal{P}_{0}^{\leq L}} l_{\gamma}^{4}e^{-2l_{\gamma}}\left(\frac{1}{T}\frac{e^{l_{\gamma}}}{l_{\gamma}} + \frac{e^{\nu l_{\gamma}}}{l_{\gamma}}\right)=
	\frac{1}{T}\sum_{\gamma\in\mathcal{P}_{0}^{\leq L}}  l_{\gamma}^{3}e^{-l_{\gamma}} + \sum_{\gamma\in\mathcal{P}_{0}^{\leq L}}  l_{\gamma}^{3}e^{-(2-\nu)l_{\gamma}}.
\end{multline*}

Summation by parts gives:
\[
	\sum_{\gamma\in\mathcal{P}_{0}^{\leq L}}  l_{\gamma}^{3}e^{-l_{\gamma}} \ll \int_{0}^{L}x^{2} \ll L^{3},
\]
while $\nu<1$ implies:
\[
	\sum_{\gamma\in\mathcal{P}_{0}^{\leq L}}  l_{\gamma}^{3}e^{-(2-\nu)l_{\gamma}} \ll \int_{0}^{L}x^{2}e^{-(1-\nu)x} \ll 1.
\]

All in all:
\[
	\Sigma_{1} \ll \frac{L^{3}}{T} + 1,
\]
which is Lemma \ref{sigma1 bound}.

\subsection{Bounding $\Sigma_{2}$}
Recall that:
\begin{multline*}
	\Sigma_{2} =\\
	\sum_{\gamma\neq\delta}\sum_{\substack{\left(\begin{smallmatrix} 1&1\\c&d \end{smallmatrix}\right) \in \text{Sin}_{L} \\ c+d\geq 3}} G(1,c)G(1,d) l_{\gamma}^{2} l_{\delta}^{2}e^{-\frac{(1+c)l_{\gamma}}{2}-\frac{(1+d)l_{\delta}}{2}}\hat{w}\left(T(l_{\gamma}-l_{\delta})\right)\hat{w}\left(T(cl_{\gamma}-dl_{\delta})\right),
\end{multline*}
where we sum over $\gamma$ and $\delta$ that are elements of $\mathcal{P}_{0}^{\leq L}$.

If $\left(\begin{smallmatrix} 1&1\\c&d \end{smallmatrix}\right) \in \text{Sin}_{L}$ then $c=d$, while $c+d\geq 3$ implies $c=d\geq 2$. Also notice that $G(1,t)=1$ for all positive integers $t$, therefore:
\begin{multline*}
	\Sigma_{2}=\sum_{\gamma\neq\delta}\sum_{2\leq c\leq L} l_{\gamma}^{2} l_{\delta}^{2}e^{-\frac{(1+c)l_{\gamma}}{2}-\frac{(1+c)l_{\delta}}{2}}\hat{w}\left(T(l_{\gamma}-l_{\delta})\right)\hat{w}\left(cT(l_{\gamma}-l_{\delta})\right) \ll\\
	\sum_{\gamma\neq\delta}\sum_{c\geq 2} l_{\gamma}^{2} l_{\delta}^{2}e^{-\frac{(1+c)l_{\gamma}}{2}-\frac{(1+c)l_{\delta}}{2}}\hat{w}\left(T(l_{\gamma}-l_{\delta})\right)\hat{w}\left(cT(l_{\gamma}-l_{\delta})\right).
\end{multline*}
In order for $\gamma\neq\delta$ to give a non-zero contribution we need $\left|l_{\gamma}-l_{\delta}\right|\ll \frac{1}{cT}$ (because $c\geq 2$ this  automatically implies $\left|l_{\gamma}-l_{\delta}\right|\ll \frac{1}{T}$). Thus, $\gamma\neq\delta$ give a non-zero contribution if and only if $l_{\delta} = l_{\gamma} + \frac{1}{c}O\left(\frac{1}{T}\right)$. This implies:
\[
	\Sigma_{2} \ll \sum_{\gamma\in\mathcal{P}_{0}^{\leq L}}\sum_{c\geq 2} l_{\gamma}^{4}e^{-(1+c)l_{\gamma}}\#\left\{\delta : l_{\delta} = l_{\gamma} +  \frac{1}{c}O\left(\frac{1}{T}\right)\right\}.
\]

As before, using Theorem \ref{prime geo} gives:
\[
	\#\left\{\delta : l_{\delta} = l_{\gamma} +  \frac{1}{c}O\left(\frac{1}{T}\right)\right\} \ll \frac{1}{T}\frac{e^{l_{\gamma}}}{l_{\gamma}} +\frac{e^{\nu l_{\gamma}}}{l_{\gamma}},
\]
so that:
\begin{multline*}
	\sum_{\gamma\in\mathcal{P}_{0}^{\leq L}}\sum_{c\geq 2} l_{\gamma}^{4}e^{-(1+c)l_{\gamma}}\#\left\{\delta : l_{\delta} = l_{\gamma} +  \frac{1}{c}O\left(\frac{1}{T}\right)\right\} \ll\\
	\frac{1}{T}\sum_{\gamma\in\mathcal{P}_{0}^{\leq L}}\sum_{c\geq 2} l_{\gamma}^{3}e^{-cl_{\gamma}} + \sum_{\gamma\in\mathcal{P}_{0}^{\leq L}}\sum_{c\geq 2} l_{\gamma}^{3}e^{-((1-\nu)+c)l_{\gamma}}.
\end{multline*}

For $l_{\gamma}\gg 1$ we have:
\[
	\sum_{c\geq2} e^{-cl_{\gamma}}\ll e^{-2l_{\gamma}},
\]
and:
\[
	\sum_{c\geq2}e^{-((1-\nu)+c)l_{\gamma}}\ll e^{-(3-\nu)l_{\gamma}}.
\]
Using summation by parts we get:
\[
	\sum_{\gamma\in\mathcal{P}_{0}^{\leq L}}\sum_{c\geq 2} l_{\gamma}^{3}e^{-cl_{\gamma}}\ll
	\sum_{\gamma\in\mathcal{P}_{0}^{\leq L}}l_{\gamma}^{3}e^{-2l_{\gamma}}\ll 1,
\]
while $\nu<1$ implies:
\[
	\sum_{\gamma\in\mathcal{P}_{0}^{\leq L}}\sum_{c\geq 2} l_{\gamma}^{3}e^{-((1-\nu)+c)l_{\gamma}} \ll
	\sum_{\gamma\in\mathcal{P}_{0}^{\leq L}} l_{\gamma}^{3}e^{-(3-\nu)l_{\gamma}}  \ll 1.
\]

Overall:
\[
	\Sigma_{2} \ll 1,
\]
and thus we have Lemma \ref{sigma2 bound}.

\section{Bounding $\Sigma_{3}$}
Recall that $\Sigma_{3}$ is the following sum:
\[\scalebox{.85}{$\displaystyle
		\sum_{\gamma\neq\delta}\sum_{\substack{\left(\begin{smallmatrix} a&b\\c&d \end{smallmatrix}\right) \in \text{Sin}_{L} \\ a+b,c+d\geq 3}} G(a,c)G(b,d)s(\gamma,a)s(\gamma,c)s(\delta,b)s(\delta,d)\hat{w}\left(T(al_{\gamma}-bl_{\delta})\right)\hat{w}\left(T(cl_{\gamma}-dl_{\delta})\right),$}
\]
where $\gamma$ and $\delta$ range over $\mathcal{P}_{0}^{\leq L}$.

If $A=\left(\begin{smallmatrix} a&b\\c&d \end{smallmatrix}\right) \in \text{Sin}_{L}$ then one of $A$'s columns is a multiple of the other, thus:
\[
	\Sigma_{3} = \sum_{\gamma\neq\delta}\sum_{\substack{\left(\begin{smallmatrix} a&b\\c&d \end{smallmatrix}\right) \in \text{Sin}_{L} \\ a+b,c+d\geq 3}} =
	\sum_{\gamma\neq\delta}\sum_{\substack{\exists \lambda\neq 0: \left(\begin{smallmatrix} b\\d\end{smallmatrix}\right)=\lambda \left(\begin{smallmatrix} a\\c\end{smallmatrix}\right) \\ 1\leq a,b,c,d \leq L \\ a+b,c+d\geq 3}}.
\]
We denote the last double sum by $\sum_{\text{Columns}}$ so that:
\[
	\Sigma_{3} \ll \sum_{\text{Columns}}.
\]

Before delving into the analysis, note that the scalar $\lambda$ found in the definition of $\sum_{\text{Columns}}$
is a non-negative rational number of the form $\frac{u}{v}$ where $1\leq u,v\leq L$.
We denote the set of such rational numbers by $\mathbb{Q}_{L}$ and set:
\[
	\mathbb{Q}_{\geq x,L} = \mathbb{Q}_{L}\cap [x,\infty), \;\;\;\;\; \mathbb{Q}_{> x,L} = \mathbb{Q}_{L}\cap (x,\infty).
\]
Also, note that $\#\mathbb{Q}_{L}\ll L^{2}$ as $\#\mathbb{Q}_{L}$ is at most the number of pairs of positive integers not exceeding $L$.
In addition, let us extend the definition of \newline $G(a,b) = \sigma(\gcd(a,b))$ so that $G(x,y) = 0$ if $x$ or $y$ are not integers.

Let us bound $\sum_{\text{Columns}}$. As one column is a multiple of the other we can assume w.l.g. that $ \left(\begin{smallmatrix} b\\d\end{smallmatrix}\right)=\lambda \left(\begin{smallmatrix} a\\c\end{smallmatrix}\right) $ for some $ \lambda \in \mathbb{Q}_{\geq 1,L}$ so that:
\begin{multline*}
	\sum_{\text{Columns}} \ll\\
	\scalebox{.85}{$\displaystyle
		\sum_{\gamma\neq\delta}\sum_{\substack{\lambda \in \mathbb{Q}_{\geq 1,L}\\ 1\leq a,c \leq L \\ (1+\lambda)a, (1+\lambda)c\geq 3}} G(a,c)G(\lambda a,\lambda c)l_{\gamma}^{2}l_{\delta}^{2}e^{-\frac{a+c}{2}l_{\gamma} - \lambda\frac{a+c}{2}l_{\delta}}\hat{w}\left(aT(l_{\gamma}-\lambda l_{\delta})\right)\hat{w}\left(cT(l_{\gamma}-\lambda l_{\delta})\right).$}
\end{multline*}
We can assume that $c\leq a$ in the above sum and so:
\[
	\sum_{\text{Columns}} \ll
	\sum_{\gamma\neq\delta}\sum_{\substack{\lambda \in \mathbb{Q}_{\geq 1,L}\\ 1\leq c\leq a \leq L\\ (1+\lambda)a, (1+\lambda)c\geq 3}}.
\]

If $\gamma,\delta$ are to contribute to the sum we need $l_{\gamma}=\lambda l_{\delta} + \frac{1}{a}O\left(\frac{1}{T}\right)$ so that:
{\footnotesize{\begin{multline*}
		\sum_{\text{Columns}}\ll\\
		\sum_{\delta\in\mathcal{P}_{0}^{\leq L}}\sum_{\substack{\lambda \in \mathbb{Q}_{\geq 1,L}\\ 1\leq c\leq a\leq L \\ (1+\lambda)a, (1+\lambda)c\geq 3}} \lambda^{2}G(a,c)G(\lambda a,\lambda c)l_{\delta}^{4}e^{- \lambda(a+c)l_{\delta}}\#\left\{\gamma: l_{\gamma}=\lambda l_{\delta} + \frac{1}{a}O\left(\frac{1}{T}\right)\right\}.
	\end{multline*}}}Using Theorem \ref{prime geo} we have:
\[
	\#\left\{\gamma: l_{\gamma}=\lambda l_{\delta} + \frac{1}{a}O\left(\frac{1}{T}\right)\right\} \ll \frac{1}{aT}\frac{e^{\lambda l_{\delta}}}{\lambda l_{\delta}} + \frac{e^{\nu \lambda l_{\delta}}}{\lambda l_{\delta}},
\]
thus:
\begin{multline*}
	\sum_{\text{Columns}} \ll \frac{1}{T} \sum\frac{\lambda}{a}G(a,c)G(\lambda a,\lambda c)l_{\delta}^{3}e^{- \lambda(a+c-1)l_{\delta}} +\\
	\sum\lambda G(a,c)G(\lambda a,\lambda c)l_{\delta}^{3}e^{- \lambda(a+c-\nu)l_{\delta}},
\end{multline*}
where the sums in the RHS are over all $\delta \in \mathcal{P}_{0}^{\leq L}$, $\lambda \in \mathbb{Q}_{\geq 1,L}$ and $1\leq c\leq a\leq L$ such that:
\[
	(1+\lambda)a, (1+\lambda)c\geq 3.
\]

We denote:
\[
	J_{1} = \sum_{\delta\in\mathcal{P}_{0}^{\leq L}}\sum_{\substack{\lambda \in \mathbb{Q}_{\geq 1,L}\\ 1\leq c\leq a\leq L\\ (1+\lambda)a\geq 3, (1+\lambda)c\geq 3}}\frac{\lambda}{a}G(a,c)G(\lambda a,\lambda c)l_{\delta}^{3}e^{- \lambda(a+c-1)l_{\delta}},
\]
and:
\[
	J_{2} = \sum_{\delta\in\mathcal{P}_{0}^{\leq L}}\sum_{\substack{\lambda \in \mathbb{Q}_{\geq 1,L}\\ 1\leq c\leq a\leq L\\ (1+\lambda)a\geq 3, (1+\lambda)c\geq 3}}\lambda G(a,c)G(\lambda a,\lambda c)l_{\delta}^{3}e^{- \lambda(a+c-\nu)l_{\delta}}.
\]
Overall we are left with:
\[
	\sum_{\text{Columns}} \ll  \frac{1}{T}J_{1} + J_{2}.
\]
\subsection{Bounding $J_{1}$}
Recall that the inner sum in $J_{1}$ is over $\lambda \in \mathbb{Q}_{\geq 1,L}$ and $1\leq c\leq a\leq L$ such that:
\[
	(1+\lambda)a, (1+\lambda)c\geq 3.
\]
If $\lambda < 2$ then for the conditions in the sum to be satisfied we need $2\leq c\leq a\leq L$. Write $J_{1} = J_{1,\lambda < 2} + J_{1,\lambda \geq 2}$, where $J_{1,\lambda < 2}, J_{1,\lambda \geq 2}$ are the sum $J_{1}$ with the extra condition on $\lambda$ respectively.

First we deal with $J_{1,\lambda < 2}$, writing it explicitly:
\[
	J_{1,\lambda < 2} = \sum_{\delta\in\mathcal{P}_{0}^{\leq L}}\sum_{\substack{\lambda \in \mathbb{Q}_{L}\cap[1,2) \\ 2\leq c\leq a\leq L}}\frac{\lambda}{a}G(a,c)G(\lambda a,\lambda c)l_{\delta}^{3}e^{- \lambda(a+c-1)l_{\delta}}.
\]
For $\lambda \in \mathbb{Q}_{L}\cap[1,2)$ and $2\leq c\leq a$ as in the inner sum of $J_{1,\lambda < 2}$ we have:
\[
	G(\lambda a,\lambda c)\ll \lambda a \log(\lambda a)\ll a\log(a),
\]
so that for $l_{\delta}\gg 1$:
\[
	\sum_{\substack{\lambda \in \mathbb{Q}_{L}\cap[1,2)  \\ 2\leq c\leq a\leq L}}\frac{\lambda}{a}G(a,c)G(\lambda a,\lambda c)e^{- \lambda(a+c-1)l_{\delta}} \ll L^{2}e^{-3l_{\delta}}.
\]
Thus:
\[
	J_{1,\lambda < 2} \ll L^{2}\sum_{\delta\in\mathcal{P}_{0}^{\leq L}}l_{\delta}^{3}e^{- 3l_{\delta}}  \ll L^{2}.
\]

As for $J_{1,\lambda \geq 2}$, if $\lambda \geq 2$ then for the conditions to hold we can have any $1\leq c\leq a\leq L$ resulting in:
\[
	J_{1,\lambda \geq 2} = \sum_{\delta\in\mathcal{P}_{0}^{\leq L}}\sum_{\substack{\lambda \in \mathbb{Q}_{\geq 2,L} \\ 1\leq c\leq a\leq L}}\frac{\lambda}{a}G(a,c)G(\lambda a,\lambda c)l_{\delta}^{3}e^{- \lambda(a+c-1)l_{\delta}}.
\]
For $l_{\delta}\gg 1$ trivially:
\[
	\sum_{\substack{\lambda \in \mathbb{Q}_{\geq 2,L} \\ 1\leq c\leq a\leq L}} \frac{\lambda}{a}G(a,c)G(\lambda a,\lambda c)e^{- \lambda(a+c-1)l_{\delta}} \ll L^{2}e^{-2l_{\delta}},
\]
so that:
\begin{multline*}
	J_{1,\lambda \geq 2} = \sum_{\delta\in\mathcal{P}_{0}^{\leq L}}\sum_{\substack{\lambda \in \mathbb{Q}_{\geq 2,L} \\ 1\leq c\leq a\leq L}}\frac{\lambda}{a}G(a,c)G(\lambda a,\lambda c)l_{\delta}^{3}e^{- \lambda(a+c-1)l_{\delta}} \ll\\
	L^{2}\sum_{\delta\in\mathcal{P}_{0}^{\leq L}}l_{\delta}^{3}e^{-2l_{\delta}} \ll L^{2}.
\end{multline*}
All in all:
\[
	J_{1} = J_{1,\lambda < 2} + J_{1,\lambda \geq 2} \ll L^{2}.
\]
\subsection{Bounding $J_{2}$}
Analogously to the analysis done for $J_{1}$, we write \newline $J_{2} = J_{2,\lambda < 2} + J_{2,\lambda\geq 2}$ where:
\[
	J_{2,\lambda < 2} = \sum_{\delta\in\mathcal{P}_{0}^{\leq L}}\sum_{\substack{\lambda \in \mathbb{Q}_{L}\cap[1,2) \\ 2\leq c\leq a\leq L}}\lambda G(a,c)G(\lambda a,\lambda c)l_{\delta}^{3}e^{- \lambda(a+c-\nu)l_{\delta}},
\]
and:
\[
	J_{2,\lambda\geq 2} = \sum_{\delta\in\mathcal{P}_{0}^{\leq L}}\sum_{\substack{\lambda \in \mathbb{Q}_{\geq 2,L} \\ 1\leq c\leq a\leq L}} \lambda G(a,c)G(\lambda a,\lambda c)l_{\delta}^{3}e^{- \lambda(a+c-\nu)l_{\delta}}.
\]

Let us first deal with $J_{2,\lambda < 2}$. Fixing $l_{\delta}\gg 1$ we have as before:
\[
	\sum_{\substack{\lambda \in \mathbb{Q}_{L}\cap[1,2) \\ 2\leq c\leq a\leq L}}\lambda G(a,c)G(\lambda a,\lambda c)e^{- \lambda(a+c-\nu)l_{\delta}} \ll L^{2}e^{-(4-\nu)l_{\delta}},
\]
thus giving:
\[
	J_{2,\lambda<2} \ll L^{2}\sum_{\delta\in\mathcal{P}_{0}^{\leq L}} l_{\delta}^{3}e^{-(4-\nu)l_{\delta}} \ll L^{2}.
\]

As for $J_{2,\lambda\geq 2}$, fixing $l_{\delta}\gg 1$ we have:
\[
	\sum_{\substack{\lambda  \in \mathbb{Q}_{\geq 2,L} \\ 1\leq c\leq a\leq L}} \lambda G(a,c)G(\lambda a,\lambda c)e^{- \lambda(a+c-\nu)l_{\delta}} \ll L^{2}e^{-2(2-\nu)l_{\delta}},
\]
so that:
\[
	J_{2,\lambda\geq 2} \ll L^{2}\sum_{\delta\in\mathcal{P}_{0}^{\leq L}} l_{\delta}^{3}e^{-2(2-\nu)l_{\delta}} \ll L^{2}.
\]
All in all:
\[
	J_{2} = J_{2,\lambda<2} + J_{2,\lambda\geq 2} \ll L^{2}.
\]

Finally:
\[
	\Sigma_{3} \ll \sum_{\text{Columns}} \ll  \frac{1}{T}J_{1} + J_{2} \ll L^{2},
\]
proving Lemma \ref{sigma3 bound}.

\section{Bounding $\mathbb{E}_{n}\left|\frac{4\pi}{L^{2}}\text{Diag}-\sigma_{\chi,\psi}^{2}\right|$}
\begin{proof}[Proof of Proposition \ref{diag bound}]
	To bound $\mathbb{E}_{n}\left|\frac{4\pi}{L^{2}}\text{Diag}-\sigma_{\chi,\psi}^{2}\right|$
	we use Cauchy-Schwarz:
	\[
		\left[\mathbb{E}_{n}\left|\text{Diag}-\sigma_{\chi,\psi}^{2}\right|\right]^{2}\leq
		\mathbb{E}_{n}\left[\left(\frac{4\pi}{L^{2}}\text{Diag}-\sigma_{\chi,\psi}^{2}\right)^{2}\right].
	\]
	Squaring out we get:
	\[
		\mathbb{E}_{n}\left[\left(\frac{4\pi}{L^{2}}\text{Diag}-\sigma_{\chi,\psi}^{2}\right)^{2}\right]=\frac{16\pi^{2}}{L^{4}}\mathbb{E}_{n}\left[\text{Diag}^{2}\right]-\frac{8\pi\sigma_{\chi,\psi}^{2}}{L^{2}}\mathbb{E}_{n}\left[\text{Diag}\right]+\sigma_{\chi,\psi}^{4}.
	\]

	We show the following two lemmas:
	\begin{lem}\label{exp diag}
		For $T\gg 1$ we have:
		\[
			\mathbb{E}_{n}[\textnormal{Diag}]=\frac{L^{2}}{4\pi}\sigma_{\chi,\psi}^{2}+O(L) + O_{L}(1/n).
		\]
	\end{lem}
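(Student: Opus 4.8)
The plan is to turn $\mathbb{E}_{n}[\text{Diag}]$ into a deterministic sum over closed geodesics and to evaluate that sum by the prime geodesic theorem and summation by parts. Recall from the proof of Theorem~\ref{almost sure} that, for $T\gg 1$,
\[
	\text{Diag}=\frac{1}{2\pi}\sum_{\substack{\gamma\in\mathcal{P}_{0}\\ a\geq 1}}f_{n}(\gamma,a)^{2}=\frac{1}{2\pi}\sum_{\substack{\gamma\in\mathcal{P}_{0}\\ a\geq 1}}s(\gamma,a)^{2}U_{n}(\gamma^{a})^{2},
\]
and that the compact support of $\hat\psi$ makes this a finite sum, with $l_{\gamma}\leq L$ and $a\leq L/\text{Sys}(X)$. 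First I would take expectations term by term, using $\mathbb{E}_{n}[U_{n}(\gamma^{a})^{2}]=G(a,a)+O_{\gamma,a}(1/n)=\sigma(a)+O_{\gamma,a}(1/n)$ — this is Lemma~\ref{G lem} together with the $O(1/n)$ error estimates underlying Theorem~\ref{main thm 2} (for the way the bound is ultimately used, via a $\limsup_{n\to\infty}$, the qualitative statement of Lemma~\ref{G lem} would already suffice) — and the fact that for fixed $L$ there are only $O_{L}(1)$ nonzero terms, to obtain
\[
	\mathbb{E}_{n}[\text{Diag}]=\frac{1}{2\pi}\sum_{\substack{\gamma\in\mathcal{P}_{0},\,l_{\gamma}\leq L\\ a\geq 1}}s(\gamma,a)^{2}\sigma(a)+O_{L}(1/n).
\]
It then remains to show that the deterministic sum $S:=\sum_{\gamma,a}s(\gamma,a)^{2}\sigma(a)$ equals $\frac{L^{2}}{2}\sigma_{\chi,\psi}^{2}+O(L)$.

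Next I would discard the terms with $a\geq 2$: since $1/\sinh^{2}(al_{\gamma}/2)\asymp e^{-al_{\gamma}}$ for $l_{\gamma}\geq\text{Sys}(X)$ and $\sigma(a)\ll a^{2}$, their total is $\ll\sum_{\gamma\in\mathcal{P}_{0},\,l_{\gamma}\leq L}l_{\gamma}^{2}e^{-2l_{\gamma}}$, which is $O(1)$ by summation by parts against Theorem~\ref{prime geo} (the same kind of estimate as in Section~\ref{B bound proof}). Thus
\[
	S=\sum_{\substack{\gamma\in\mathcal{P}_{0}\\ l_{\gamma}\leq L}}\frac{\mathfrak{R}(\chi(\gamma))^{2}\,l_{\gamma}^{2}\,\hat\psi(l_{\gamma}/L)^{2}}{\sinh^{2}(l_{\gamma}/2)}+O(1).
\]
Now I would handle the character factor. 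When $\chi^{2}=1$ one has $\mathfrak{R}(\chi(\gamma))^{2}\equiv 1$. When $\chi^{2}\neq 1$, write $\mathfrak{R}(\chi(\gamma))^{2}=\frac{1}{2}+\frac{1}{2}\mathfrak{R}(\chi(\gamma^{2}))$; the oscillating half is $O(1)$ by summation by parts against the primitive twisted geodesic count for the nontrivial character $\chi^{2}$, whose growth is controlled by Proposition~\ref{prime chi geo} applied to $\chi^{2}$ (after passing from the all‑powers count $N_{\chi^{2}}$ to the primitive one, the higher‑power corrections being trivially $\ll e^{T/2}$), so that the relevant integral $\int^{L}x^{2}e^{-x}\cdot\frac{e^{\beta x}}{x}\,dx$ with $\beta<1$ converges.

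In both cases we are reduced to estimating $\sum_{\gamma\in\mathcal{P}_{0},\,l_{\gamma}\leq L}l_{\gamma}^{2}e^{-l_{\gamma}}\hat\psi(l_{\gamma}/L)^{2}$: replacing $1/\sinh^{2}(l_{\gamma}/2)$ by $4e^{-l_{\gamma}}$ costs only $O(\sum l_{\gamma}^{2}e^{-2l_{\gamma}})=O(1)$, and summation by parts against Theorem~\ref{prime geo} (with density $\frac{e^{x}}{2x}$, since we sum over $\mathcal{P}_{0}$) gives
\[
	4\sum_{\substack{\gamma\in\mathcal{P}_{0}\\ l_{\gamma}\leq L}}l_{\gamma}^{2}e^{-l_{\gamma}}\hat\psi(l_{\gamma}/L)^{2}=2\int_{\text{Sys}(X)}^{L}x\,\hat\psi(x/L)^{2}\,dx+O(1)=L^{2}\int_{\mathbb{R}}|x|\,\hat\psi(x)^{2}\,dx+O(1),
\]
where the remainder $O(e^{\nu x}/x)$ with $\nu<1$ in Theorem~\ref{prime geo} contributes $O(1)$ because the weight carries a factor $e^{-x}$, the boundary contribution at $x=L$ is exponentially small, and the last equality is the substitution $x=Lu$ together with $\hat\psi$ being even and supported in $[-1,1]$. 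Since $\Sigma_{\text{GOE}}^{2}(\psi)=2\int_{\mathbb{R}}|x|\hat\psi(x)^{2}\,dx$ and $\Sigma_{\text{GUE}}^{2}(\psi)=\frac{1}{2}\Sigma_{\text{GOE}}^{2}(\psi)$, this yields $S=\frac{L^{2}}{2}\sigma_{\chi,\psi}^{2}+O(1)$ in both regimes, hence
\[
	\mathbb{E}_{n}[\text{Diag}]=\frac{L^{2}}{4\pi}\sigma_{\chi,\psi}^{2}+O(1)+O_{L}(1/n),
\]
which is in fact slightly stronger than the claimed bound.

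The summation‑by‑parts steps are routine and parallel those already carried out for $B(\underline{r})$ and for $\Sigma_{1},\Sigma_{2},\Sigma_{3}$. The two points deserving care are: first, the $n$‑dependence — one needs that, after summing the per‑term errors in $\mathbb{E}_{n}[U_{n}(\gamma^{a})^{2}]=\sigma(a)+O_{\gamma,a}(1/n)$ over the $O_{L}(1)$ relevant pairs $(\gamma,a)$, the aggregate is of the shape used downstream (an $L$‑dependent constant times $1/n$, i.e.\ something that vanishes as $n\to\infty$ for each fixed $L$); and second, the treatment of the $\frac{1}{2}\mathfrak{R}(\chi(\gamma^{2}))$ term when $\chi^{2}\neq 1$ — this is precisely where the factor $\frac{1}{2}$ separating $\Sigma_{\text{GUE}}^{2}$ from $\Sigma_{\text{GOE}}^{2}$ appears, and where Proposition~\ref{prime chi geo} is indispensable. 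I expect that second point to be the only genuine subtlety; the rest is bookkeeping.
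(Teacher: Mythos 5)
Your proof is correct and follows the same broad strategy as the paper: split $\text{Diag}$ into the $a=1$ and $a\geq 2$ contributions, take expectations using $\mathbb{E}_n[U_n(\gamma^a)^2]=\sigma(a)+O_{\gamma,a}(1/n)$, show the $a\geq 2$ part is $O(1)$, and evaluate the $a=1$ part by summation by parts against the prime geodesic theorem (and its twisted analogue). There are two genuine points of difference worth flagging. First, where you write $\mathfrak{R}(\chi(\gamma))^2 = \tfrac12 + \tfrac12\mathfrak{R}(\chi^2(\gamma))$ and handle the constant and oscillating halves separately, the paper instead directly estimates the partial sums of $\mathfrak{R}(\chi(\gamma))^2$ as $r_\chi\frac{e^x}{x}(1+O(1/x))$ and feeds that into one Abel summation; your split is a bit more transparent about why the constant $r_\chi$ (and hence the GOE/GUE dichotomy) appears. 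Second, your summation by parts is formulated against $d\bigl(\tfrac12\Li(e^x)\bigr)=\tfrac{e^x}{2x}\,dx$ plus an $O(e^{\nu x}/x)$ remainder, and you correctly conclude that the remainder integrates to $O(1)$; the paper instead works with the cruder $\frac{e^x}{x}(1+O(1/x))$ approximation and Abel summation, which produces extra $\int R'/x$ and $\int O(1/x)(\cdot)$ pieces of size $O(L)$. So your error term $O(1)$ is genuinely sharper than the paper's $O(L)$ — you are right that you prove something slightly stronger, and that is not an oversight in your argument but a real (if minor) improvement. One small imprecision: the boundary contribution at $x=L$ vanishes exactly, not just "exponentially small", because $\hat\psi$ is smooth with support in $[-1,1]$ so $\hat\psi(1)=0$; and the discrepancy between $N_{\chi^2}$ and its primitive version is $\ll T e^{T/2}$ rather than $\ll e^{T/2}$, though this changes nothing. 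These are cosmetic; the argument is sound.
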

	\begin{lem}\label{exp diag2}
		For $T\gg 1$ we have:
		\[
			\mathbb{E}_{n}[\textnormal{Diag}^{2}]=\frac{L^{4}}{16\pi^{2}}\sigma_{\chi,\psi}^{4}+O(L^{3})+O_{L}(1/n).
		\]
	\end{lem}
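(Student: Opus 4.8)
The plan is to start from the identity $\text{Diag}=\frac{1}{2\pi}\sum_{\gamma\in\mathcal{P}_{0},\,a\geq 1}f_{n}(\gamma,a)^{2}$ that holds for $T\gg 1$, square it, and take the expectation, obtaining
\[
\mathbb{E}_{n}[\text{Diag}^{2}]=\frac{1}{4\pi^{2}}\sum_{\substack{\gamma,\delta\in\mathcal{P}_{0}\\ a,b\geq 1}}s(\gamma,a)^{2}s(\delta,b)^{2}\,\mathbb{E}_{n}\!\left[U_{n}(\gamma^{a})^{2}U_{n}(\delta^{b})^{2}\right].
\]
Since $\hat\psi$ is compactly supported, $s(\gamma,a)=0$ unless $\gamma\in\mathcal{P}_{0}^{\leq L}$ and $a\ll L$, so the sum is finite and every rearrangement below is legitimate. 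I then split it into the off-diagonal part ($\gamma\neq\delta$) and the true diagonal part ($\gamma=\delta$), and treat the two separately.

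For the off-diagonal part I would invoke Corollary \ref{main cor 2}, applied to the $U_{n}$'s exactly as in the treatment of $\text{Off}^{2}$, to get $\mathbb{E}_{n}[U_{n}(\gamma^{a})^{2}U_{n}(\delta^{b})^{2}]=\lim_{n}\mathbb{E}_{n}[U_{n}(\gamma^{a})^{2}]\cdot\lim_{n}\mathbb{E}_{n}[U_{n}(\delta^{b})^{2}]+O_{\gamma,\delta,a,b}(1/n)$, and Lemma \ref{G lem} with $a=b$ identifies $\lim_{n}\mathbb{E}_{n}[U_{n}(\gamma^{a})^{2}]=G(a,a)=\sigma(a)$. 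Writing $S=\sum_{\gamma\in\mathcal{P}_{0}^{\leq L},\,a\geq 1}s(\gamma,a)^{2}\sigma(a)$, the leading off-diagonal contribution is $\frac{1}{4\pi^{2}}\bigl(S^{2}-S'\bigr)$, where $S'=\sum_{\gamma\in\mathcal{P}_{0}^{\leq L}}\bigl(\sum_{a}s(\gamma,a)^{2}\sigma(a)\bigr)^{2}$ subtracts the $\gamma=\delta$ terms that were added when completing the square. The crucial observation is that $S$ is precisely the quantity controlled by Lemma \ref{exp diag}: since $\mathbb{E}_{n}[U_{n}(\gamma^{a})^{2}]=\sigma(a)+O_{\gamma,a}(1/n)$, one has $2\pi\mathbb{E}_{n}[\text{Diag}]=S+O_{L}(1/n)$, so letting $n\to\infty$ in Lemma \ref{exp diag} gives $S=\frac{L^{2}}{2}\sigma_{\chi,\psi}^{2}+O(L)$ and hence $S^{2}=\frac{L^{4}}{4}\sigma_{\chi,\psi}^{4}+O(L^{3})$. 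For $S'$, the bounds $|s(\gamma,a)|\ll l_{\gamma}e^{-al_{\gamma}/2}$ and $\sigma(a)\ll a^{2}$ give $\sum_{a\geq 1}s(\gamma,a)^{2}\sigma(a)\ll l_{\gamma}^{2}e^{-l_{\gamma}}$, whence $S'\ll\sum_{\gamma\in\mathcal{P}_{0}^{\leq L}}l_{\gamma}^{4}e^{-2l_{\gamma}}\ll 1$ by Theorem \ref{prime geo} and summation by parts. So the off-diagonal part equals $\frac{L^{4}}{16\pi^{2}}\sigma_{\chi,\psi}^{4}+O(L^{3})$ modulo the $1/n$-errors discussed next.

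For the true diagonal part ($\gamma=\delta$), Corollary \ref{dist cor} gives $\mathbb{E}_{n}[U_{n}(\gamma^{a})^{2}U_{n}(\gamma^{b})^{2}]=R(a,a,b,b)+O_{\gamma,a,b}(1/n)$, and Lemma \ref{R bound} bounds $|R(a,a,b,b)|\ll a^{4}b^{4}$; combined with $|s(\gamma,a)|^{2}|s(\gamma,b)|^{2}\ll l_{\gamma}^{4}e^{-(a+b)l_{\gamma}}$, summing first over $a,b\geq 1$ and then over $\gamma$ via Theorem \ref{prime geo} shows this part is $O(1)$. Finally, for the accumulated $O(1/n)$-errors one notes that, over the finite ranges $\gamma,\delta\in\mathcal{P}_{0}^{\leq L}$ and $1\leq a,b\ll L$, the implied constants are uniformly bounded as a function of $L$ — by the same argument already used for $\mathbb{E}_{n}[\text{Off}^{2}]$ — and since $s(\gamma,a)^{2}s(\delta,b)^{2}$ is bounded, the total error is $O_{L}(1/n)$. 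Adding the three contributions yields $\mathbb{E}_{n}[\text{Diag}^{2}]=\frac{L^{4}}{16\pi^{2}}\sigma_{\chi,\psi}^{4}+O(L^{3})+O_{L}(1/n)$.

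I expect the only genuinely delicate point to be the $1/n$-bookkeeping: making precise that the implied constant in $\mathbb{E}_{n}[U_{n}(\gamma^{a})^{2}U_{n}(\delta^{b})^{2}]=\sigma(a)\sigma(b)+O(1/n)$ stays bounded uniformly over $\gamma\neq\delta$ in $\mathcal{P}_{0}^{\leq L}$ and $a,b\ll L$. This is exactly the uniformity already exploited in bounding $\mathbb{E}_{n}[\text{Off}^{2}]$, so it should go through without new ideas; everything else is the prime geodesic theorem plus summation by parts, which is routine.
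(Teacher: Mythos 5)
Your proof is correct, but the decomposition you use is genuinely different from the paper's. The paper writes $\text{Diag}^{2}=\frac{1}{4\pi^{2}}[\Omega_{1}+\Omega_{2}]$, splitting by the \emph{exponents}: $\Omega_{1}$ collects the $a=b=1$ terms (over all pairs $\gamma,\delta$), while $\Omega_{2}$ collects the $a+b\geq 3$ terms. The dominant term then comes directly from $\mathbb{E}_{n}[\Omega_{1}]$, which the paper reduces to $\Theta_{1}^{2}+O(1)+O_{L}(1/n)$, reusing the quantity $\Theta_{1}=\sum_{\gamma}\mathbb{E}_{n}[f_{n}(\gamma,1)^{2}]$ already estimated in the proof of Lemma \ref{exp diag}. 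You instead split by the \emph{geodesics}: off-diagonal ($\gamma\neq\delta$, arbitrary $a,b$) versus true diagonal ($\gamma=\delta$). Your main term is $\frac{1}{4\pi^{2}}(S^{2}-S')$ with $S=\sum_{\gamma,a}s(\gamma,a)^{2}\sigma(a)$, which agrees with the paper because $S=\Theta_{1}+\Theta_{2}+O_{L}(1/n)$ and $\Theta_{2}\ll 1$, so $S^{2}=\Theta_{1}^{2}+O(L^{2})$. Both routes then use Corollaries \ref{main cor 2} and \ref{dist cor}, Lemmas \ref{G lem} and \ref{R bound}, Theorem \ref{prime geo}, and the uniformity of the $O(1/n)$-constants over finite ranges in $L$ — exactly as in the $\mathbb{E}_{n}[\text{Off}^{2}]$ analysis. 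The practical trade-off: the paper's split by exponent isolates the single dominant monomial $a=b=1$ up front and reuses $\Theta_{1}$ verbatim; your split by $\gamma=\delta$ vs.\ $\gamma\neq\delta$ is the more natural one given the asymptotic-independence structure of the $U_{n}$'s, at the small cost of having to correct $S^{2}$ by the term $S'\ll 1$. Either way the arithmetic closes cleanly.
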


	Together, the above lemmas imply:
	\[
		\mathbb{E}_{n}\left[\left(\frac{4\pi}{L^{2}}\text{Diag}-\sigma_{\chi,\psi}^{2}\right)^{2}\right] = O(1/L)+O_{L}(1/n),
	\]
	so that:
	\[
		\mathbb{E}_{n}\left|\frac{4\pi}{L^{2}}\text{Diag}-\sigma_{\chi,\psi}^{2}\right| \ll \frac{1}{\sqrt{L}} + \sqrt{O_{L}(1/n)},
	\]
	which is exactly Proposition \ref{diag bound}.
\end{proof}

\subsection{Estimating $\mathbb{E}_{n}\left[\text{Diag}\right]$}
\begin{proof} [Proof of Lemma \ref{exp diag}]
	Recall that for $T\gg 1$ we have:
	\[
		\text{Diag} = \frac{1}{2\pi}\sum_{\substack{\gamma\in\mathcal{P}_{0}\\a\geq1}}f_{n}(\gamma,a)^{2},
	\]
	where:
	\[
		f_{n}(\gamma,a)=\frac{\mathfrak{R}(\chi(\gamma^{a}))l_{\gamma}\hat{\psi}(al_{\gamma}/L)}{\sinh(al_{\gamma}/2)}U_{n}(\gamma^{a}) = s(\gamma,a)U_{n}(\gamma^{a}).
	\]
	Thus:
	\[
		\mathbb{E}_{n}\left[\text{Diag}\right]=\frac{1}{2\pi}\left[\sum_{\gamma\in\mathcal{P}_{0}}\sum_{a=1}\mathbb{E}_{n}\left[f_{n}(\gamma,a)^{2}\right]+\sum_{\gamma\in\mathcal{P}_{0}}\sum_{a\geq2}\mathbb{E}_{n}\left[f_{n}(\gamma,a)^{2}\right]\right].
	\]
	Recalling that $\hat{\psi}$ is supported in $[-1,1]$ we have:
	\[
		\mathbb{E}_{n}\left[\text{Diag}\right]=\frac{1}{2\pi}\left[\sum_{\gamma\in\mathcal{P}_{0}^{\leq L}}\sum_{a=1}\mathbb{E}_{n}\left[f_{n}(\gamma,a)^{2}\right]+\sum_{\gamma\in\mathcal{P}_{0}^{\leq L}}\sum_{2\leq a\leq L}\mathbb{E}_{n}\left[f_{n}(\gamma,a)^{2}\right]\right].
	\]

	To estimate the expression $\mathbb{E}_{n}\left[f_{n}(\gamma,a)^{2}\right]$ we combine Corollary \ref{main cor 2} with Corollary \ref{dist cor}, together they yield:
	\begin{multline*}
		\mathbb{E}_{n}\left[f_{n}(\gamma,a)^{2}\right]=\\
		\left(\frac{\mathfrak{R}(\chi(\gamma^{a}))l_{\gamma}\hat{\psi}(al_{\gamma}/L)}{\sinh(al_{\gamma}/2)}\right)^{2}\mathbb{E}\left[\left(\sum_{d|a}(dZ_{1/d}-1)\right)^{2}\right] + O_{\gamma,a}(1/n),
	\end{multline*}
	where $\{Z_{1/d}\}_{d\geq 1}$ are independent Poisson random variables with parameters $1/d$.
	As $dZ_{1/d}-1$ are independent for different $d$ this comes out
	to:
	\begin{lem}\label{f exp} We have:
		\begin{multline*}
			\mathbb{E}_{n}\left[f_{n}(\gamma,a)^{2}\right]= s(\gamma,a)^{2}\sigma(a) + O_{\gamma, a}(1/n) =\\
			\left(\frac{\mathfrak{R}(\chi(\gamma^{a}))l_{\gamma}\hat{\psi}(al_{\gamma}/L)}{\sinh(al_{\gamma}/2)}\right)^{2}\left(\sum_{d|a}d\right)+ O_{\gamma,a}(1/n).
		\end{multline*}
	\end{lem}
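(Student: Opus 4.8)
The plan is to reduce the lemma to a one-line second-moment computation for a sum of independent Poisson variables. Since $s(\gamma,a)$ is deterministic (it depends only on $\gamma$, $a$, $L$, and the fixed data $\chi,\psi$), we have $\mathbb{E}_{n}[f_{n}(\gamma,a)^{2}] = s(\gamma,a)^{2}\,\mathbb{E}_{n}[U_{n}(\gamma^{a})^{2}]$, so the whole statement rests on showing
\[
	\mathbb{E}_{n}[U_{n}(\gamma^{a})^{2}] = \sigma(a) + O_{\gamma,a}(1/n).
\]
Writing $U_{n}(\gamma^{a}) = F_{n}(\gamma^{a}) - \mathbb{E}_{n}[F_{n}(\gamma^{a})]$ and expanding the square, this amounts to estimating $\mathbb{E}_{n}[F_{n}(\gamma^{a})^{2}]$ and $\mathbb{E}_{n}[F_{n}(\gamma^{a})]$ separately.

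First I would invoke Corollary \ref{main cor 2} with $t=1$. Taking $r_{1}=1,\ a_{1,1}=a$ gives $\mathbb{E}_{n}[F_{n}(\gamma^{a})] = \lim_{m\to\infty}\mathbb{E}_{m}[F_{m}(\gamma^{a})] + O_{\gamma,a}(1/n)$, and taking $r_{1}=2,\ a_{1,1}=a_{1,2}=a$ gives $\mathbb{E}_{n}[F_{n}(\gamma^{a})^{2}] = \lim_{m\to\infty}\mathbb{E}_{m}[F_{m}(\gamma^{a})^{2}] + O_{\gamma,a}(1/n)$. Since the limiting first moment is the finite constant $d(a)$, its square also converges at rate $O_{\gamma,a}(1/n)$; combining the two estimates yields $\mathbb{E}_{n}[U_{n}(\gamma^{a})^{2}] = \lim_{m\to\infty}\mathbb{E}_{m}[U_{m}(\gamma^{a})^{2}] + O_{\gamma,a}(1/n)$. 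Then Corollary \ref{dist cor} identifies $F_{m}(\gamma^{a})$ with the Poisson combination $\sum_{d\mid a} dZ_{1/d}$ at the level of moments, so $\lim_{m\to\infty}\mathbb{E}_{m}[U_{m}(\gamma^{a})^{2}] = \mathbb{E}\big[\big(\sum_{d\mid a}(dZ_{1/d}-1)\big)^{2}\big]$, where the $\{Z_{1/d}\}_{d\geq 1}$ are independent Poisson with parameter $1/d$; note $\mathbb{E}[dZ_{1/d}-1]=d\cdot(1/d)-1=0$, consistent with $\sum_{d\mid a}(dZ_{1/d}-1)$ being the centering of the distributional limit of $F_m(\gamma^a)$.

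To finish I would carry out the elementary variance computation: the summands $dZ_{1/d}-1$ are mean zero and independent across $d\mid a$, so cross terms vanish and
\[
	\mathbb{E}\Big[\Big(\sum_{d\mid a}(dZ_{1/d}-1)\Big)^{2}\Big] = \sum_{d\mid a}\mathbb{E}\big[(dZ_{1/d}-1)^{2}\big] = \sum_{d\mid a} d^{2}\,\mathrm{Var}(Z_{1/d}) = \sum_{d\mid a} d^{2}\cdot\tfrac{1}{d} = \sum_{d\mid a} d = \sigma(a),
\]
using $\mathrm{Var}(Z_{\lambda})=\lambda$. Multiplying back by the $(\gamma,a)$-dependent factor $s(\gamma,a)^{2}$ and absorbing it into the error constant gives exactly the stated identity. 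Alternatively, the limit $\lim_{m\to\infty}\mathbb{E}_{m}[U_{m}(\gamma^{a})^{2}] = G(a,a) = \sigma(\gcd(a,a)) = \sigma(a)$ is immediate from Lemma \ref{G lem}, so the only genuine content beyond bookkeeping is the quantitative $O(1/n)$ rate, which is precisely what Corollary \ref{main cor 2} supplies; I do not anticipate any real obstacle here.
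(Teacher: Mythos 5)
Your proposal is correct and follows essentially the same route as the paper: pull out the deterministic factor $s(\gamma,a)^2$, invoke Corollary \ref{main cor 2} for the $O_{\gamma,a}(1/n)$ rate and Corollary \ref{dist cor} to identify the $n\to\infty$ limit of $\mathbb{E}_n[U_n(\gamma^a)^2]$ with $\mathbb{E}\bigl[(\sum_{d\mid a}(dZ_{1/d}-1))^2\bigr]$, and then use independence of the $Z_{1/d}$ to evaluate this as $\sum_{d\mid a}d=\sigma(a)$. Your observation that Lemma \ref{G lem} gives $\lim_m\mathbb{E}_m[U_m(\gamma^a)^2]=G(a,a)=\sigma(a)$ directly is a valid shortcut for the limiting value, though it does not by itself supply the quantitative $O(1/n)$ error, which is why the appeal to Corollary \ref{main cor 2} is still needed.
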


	Denote:
	\[
		\Theta_{1} = \sum_{\gamma\in\mathcal{P}_{0}^{\leq L}}\sum_{a=1}\mathbb{E}_{n}\left[f_{n}(\gamma,a)^{2}\right] = \sum_{\gamma\in\mathcal{P}_{0}^{\leq L}}\mathbb{E}_{n}\left[f_{n}(\gamma,1)^{2}\right],
	\]
	and:
	\[
		\Theta_{2} = \sum_{\gamma\in\mathcal{P}_{0}^{\leq L}}\sum_{2\leq a\leq L}\mathbb{E}_{n}\left[f_{n}(\gamma,a)^{2}\right],
	\]
	so that:
	\[
		\mathbb{E}_{n}\left[\text{Diag}\right]=\frac{1}{2\pi}\left[\Theta_{1} + \Theta_{2}\right].
	\]
	\subsubsection{Estimating $\Theta_{1}$}
	Using Lemma \ref{f exp}:
	\[
		\Theta_{1} = \sum_{\gamma\in\mathcal{P}_{0}^{\leq L}}\mathbb{E}_{n}\left[f_{n}(\gamma,1)^{2}\right]=
		\sum_{\gamma\in\mathcal{P}_{0}^{\leq L}}\left[\left(\frac{\mathfrak{R}(\chi(\gamma))l_{\gamma}\hat{\psi}(l_{\gamma}/L)}{\sinh(l_{\gamma}/2)}\right)^{2} + O_{\gamma}(1/n)\right].
	\]
	$L$ is fixed when $n\to\infty$ and the constants in $O_{\gamma}(1/n)$
	depend only on $\gamma$, combining this with the fact that the sum
	is over $\gamma\in\mathcal{P}_{0}^{\leq L}$ we get that the implied constants in $O_{\gamma}(1/n)$ are bounded by a function of $L$. This gives:
	\[
		\Theta_{1}=\sum_{\gamma\in\mathcal{P}_{0}^{\leq L}}\left(\frac{\mathfrak{R}(\chi(\gamma))l_{\gamma}\hat{\psi}(l_{\gamma}/L)}{\sinh(l_{\gamma}/2)}\right)^{2} + O_{L}(1/n),
	\]
	so all that is left is to estimate the sum.

	Let $1\ll x \leq L$. As $\chi$ is a unitary character:
	\[
		\sum_{\substack{\gamma\in\mathcal{P}_{0}^{\leq L} \\ l_{\gamma}\leq x}}\left[\mathfrak{R}(\chi(\gamma)\right]^{2}=\frac{1}{4}\sum_{\substack{\gamma\in\mathcal{P}_{0}^{\leq L} \\ l_{\gamma}\leq x}}\chi(\gamma)^{2}+\overline{\chi(\gamma)}^{2}+2.
	\]

	If $\chi^{2}=1$ the prime geodesic theorem - Theorem \ref{prime geo}, gives this sum
	as $\frac{1}{2}\frac{e^{x}}{x}\left(1+O(1/x)\right)$ (we are summing
	over $\mathcal{P}_{0}$ so that we only count half of the geodesics), while if $\chi^{2}\neq1$ Proposition
	\ref{prime chi geo} gives this as $\frac{1}{4}\frac{e^{x}}{x}\left(1+O(1/x)\right)$.
	Set:
	\[
		r_{\chi}=\begin{cases}
			1/2 & \chi^{2}=1,    \\
			1/4 & \chi^{2}\neq1,
		\end{cases}
	\]
	so that:
	\[
		\sum_{\substack{\gamma\in\mathcal{P}_{0}^{\leq L} \\ l_{\gamma}\leq x}}\left[\mathfrak{R}(\chi(\gamma))\right]^{2}=r_{\chi}\frac{e^{x}}{x}\left(1+O(1/x)\right).
	\]

	Using the fact that $\sinh^{2}(x/2)=\frac{1}{4}e^{x}(1+O(1/x))$ we get:
	\[
		\sum_{\gamma\in\mathcal{P}_{0}^{\leq L}}\left(\frac{\mathfrak{R}(\chi(\gamma))l_{\gamma}\hat{\psi}(l_{\gamma}/L)}{\sinh(l_{\gamma}/2)}\right)^{2}=
		\sum_{\gamma\in\mathcal{P}_{0}^{\leq L}}\frac{\left[\mathfrak{R}(\chi(\gamma))\right]^{2}l_{\gamma}^{2}\hat{\psi}^{2}(l_{\gamma}/L)}{\sinh^{2}(l_{\gamma}/2)}=
	\]

	\[
		=4\sum_{\gamma\in\mathcal{P}_{0}^{\leq L}}\left[\mathfrak{R}(\chi(\gamma))\right]^{2}l_{\gamma}^{2}\hat{\psi}^{2}(l_{\gamma}/L)e^{-l_{\gamma}}+
		O\left(\sum_{\gamma\in\mathcal{P}_{0}^{\leq L}}l_{\gamma}\hat{\psi}^{2}(l_{\gamma}/L)e^{-l_{\gamma}}\right).
	\]
	Set:
	\[
		A = \sum_{\gamma\in\mathcal{P}_{0}^{\leq L}}\left[\mathfrak{R}(\chi(\gamma))\right]^{2}l_{\gamma}^{2}\hat{\psi}^{2}(l_{\gamma}/L)e^{-l_{\gamma}},
	\]
	and:
	\[
		B = \sum_{\gamma\in\mathcal{P}_{0}^{\leq L}}l_{\gamma}\hat{\psi}^{2}(l_{\gamma}/L)e^{-l_{\gamma}},
	\]
	so that:
	\[
		\Theta_{1} = 4A+O(B) + O_{L}(1/n).
	\]

	We first tackle $A$. Summation by parts gives:
	\begin{multline*}
		A = \sum_{\gamma\in\mathcal{P}_{0}^{\leq L}}\left[\mathfrak{R}(\chi(\gamma))\right]^{2}l_{\gamma}^{2}\hat{\psi}^{2}(l_{\gamma}/L)e^{-l_{\gamma}}=\\
		\int_{\text{Sys}(X)}^{L}r_{\chi}\frac{e^{x}}{x}(1+O(1/x))\left(x^{2}\hat{\psi}^{2}(x/L)-R'\right)e^{-x},
	\end{multline*}
	where $\text{Sys}(X)>0$ is the systole of $X$, that is, the length of the shortest closed geodesic on $X$, and
	$R(x)=x^{2}\hat{\psi}^{2}(x/L)$. In particular:
	\[
		R'=2x\hat{\psi}^{2}(x/L)+\frac{2x^{2}}{L}\hat{\psi}'(x/L).
	\]
	Write the above integral as:
	\[
		r_{\chi}\int_{\text{Sys}(X)}^{L}x\hat{\psi}^{2}(x/L)-\frac{R'}{x}+O(1/x)\left(x\hat{\psi}^{2}(x/L)-\frac{R'}{x}\right).
	\]

	Using Lagrange's Theorem as well as the fact that $\hat{\psi}$ is even we get:
	\begin{multline*}
		\int_{\text{Sys}(X)}^{L}x\hat{\psi}^{2}(x/L)=L^{2}\int_{\text{Sys}(X)/L}^{1}u\hat{\psi}^{2}(u)=\\\frac{1}{2}L^{2}\int_{\mathbb{R}}|u|\hat{\psi}^{2}(u) - \hat{\psi}^{2}(c)L^{2}\int_{0}^{\text{Sys}(X)/L}u=\frac{1}{4}L^{2}\Sigma_{\text{GOE}}^{2}(\psi) + O(1),
	\end{multline*}
	while:
	\[
		\int_{\text{Sys}(X)}^{L}\frac{R'}{x}=2\int_{\text{Sys}(X)}^{L}\hat{\psi}^{2}(x/L)+\frac{2}{L}\int_{\text{Sys}(X)}^{L}x\hat{\psi}'(x/L)\ll L.
	\]
	In addition:
	\[
		\int_{\text{Sys}(X)}^{L}O(1/x)\left(x\hat{\psi}^{2}(x/L)-\frac{R'}{x}\right)\ll L,
	\]
	so that:
	\[
		4A=4\sum_{\gamma\in\mathcal{P}_{0}^{\leq L}}\mathfrak{R}(\chi(\gamma))^{2}l_{\gamma}^{2}\hat{\psi}^{2}(l_{\gamma}/L)e^{-l_{\gamma}}=L^{2}r_{\chi}\Sigma_{\text{GOE}}^{2}(\psi)+O(L).
	\]

	A similar summation by parts argument gives:
	\[
		B\ll L,
	\]
	and so:
	\[
		4A+O(B)=\sum_{\gamma\in\mathcal{P}_{0}^{\leq L}}\left(\frac{\mathfrak{R}(\chi(\gamma))l_{\gamma}\hat{\psi}(l_{\gamma}/L)}{\sinh(l_{\gamma}/2)}\right)^{2}=L^{2}r_{\chi}\Sigma_{\text{GOE}}^{2}(\psi)+O(L).
	\]
	In particular:
	\[
		\Theta_{1} = 4A+O(B) + O_{L}(1/n) = L^{2}r_{\chi}\Sigma_{\text{GOE}}^{2}(\psi)+O(L) + O_{L}(1/n).
	\]

	\subsubsection{Bounding $\Theta_{2}$}
	Recall that:
	\[
		\Theta_{2} = \sum_{\gamma\in\mathcal{P}_{0}^{\leq L}}\sum_{2\leq a\leq L}\mathbb{E}_{n}\left[f_{n}(\gamma,a)^{2}\right].
	\]
	Using Lemma \ref{f exp} and:
	\[
		\sum_{d|a}d\ll a^{2},
	\]
	gives:

	\begin{multline*}
		\Theta_{2}=\sum_{\gamma\in\mathcal{P}_{0}^{\leq L}}\sum_{2\leq a\leq L}\mathbb{E}_{n}\left[f_{n}(\gamma,a)^{2}\right]\ll\\
		\sum_{\gamma\in\mathcal{P}_{0}^{\leq L}}\sum_{2\leq a\leq L}\left[a^{2}\left(\frac{\mathfrak{R}(\chi(\gamma^{a}))l_{\gamma}\hat{\psi}(al_{\gamma}/L)}{\sinh(al_{\gamma}/2)}\right)^{2} + O_{\gamma,a}(1/n)\right].
	\end{multline*}
	As $a,l_{\gamma}\leq L$, we have:
	\[
		\Theta_{2} \ll \sum_{\gamma\in\mathcal{P}_{0}^{\leq L}}\sum_{a\geq2}a^{2}\left(\frac{\mathfrak{R}(\chi(\gamma^{a}))l_{\gamma}\hat{\psi}(al_{\gamma}/L)}{\sinh(al_{\gamma}/2)}\right)^{2} + O_{L}(1/n).
	\]

	A simple estimate gives:
	\begin{multline*}
		\sum_{\gamma\in\mathcal{P}_{0}^{\leq L}}\sum_{a\geq2}a^{2}\left(\frac{\mathfrak{R}(\chi(\gamma^{a}))l_{\gamma}\hat{\psi}(al_{\gamma}/L)}{\sinh(al_{\gamma}/2)}\right)^{2}\ll\\
		\sum_{\gamma\in\mathcal{P}_{0}^{\leq L}}\sum_{a\geq2}a^{2}l_{\gamma}^{2}e^{-al_{\gamma}}=
		\sum_{\gamma\in\mathcal{P}_{0}^{\leq L}}l_{\gamma}^{2}\sum_{a\geq2}a^{2}e^{-al_{\gamma}}.
	\end{multline*}
	Using the fact that for $l_{\gamma}\gg1$ we have $\sum_{a\geq2}a^{2}e^{-al_{\gamma}}\ll e^{-2l_{\gamma}}$ yields:
	\[
		\sum_{\gamma\in\mathcal{P}_{0}^{\leq L}}l_{\gamma}^{2}\sum_{a\geq2}a^{2}e^{-al_{\gamma}}\ll\sum_{\gamma\in\mathcal{P}_{0}^{\leq L}}l_{\gamma}^{2}e^{-2l_{\gamma}}\ll\int_{0}^{L}xe^{-x}\ll1,
	\]
	hence:
	\[
		\Theta_{2}\ll 1+O_{L}(1/n).
	\]

	Recall that:
	\[
		\mathbb{E}_{n}\left[\text{Diag}\right]=\frac{1}{2\pi}\left[\Theta_{1}+\Theta_{2}\right].
	\]
	Plugging in our estimates for $\Theta_{1}$ and $\Theta_{2}$ we have:
	\begin{multline*}
		\mathbb{E}_{n}\left[\text{Diag}\right] = \frac{1}{2\pi}\left[L^{2}r_{\chi}\Sigma_{\text{GOE}}^{2}(\psi)+O(L) + O_{L}(1/n) + O(1)+ O_{L}(1/n)\right]=\\
		\frac{L^{2}}{2\pi}r_{\chi}\Sigma_{\text{GOE}}^{2}(\psi) + O(L) + O_{L}(1/n).
	\end{multline*}
	Note that $r_{\chi}\Sigma_{\text{GOE}}^{2}(\psi) = \frac{1}{2}\sigma_{\chi,\psi}^{2}$ so that:
	\[
		\mathbb{E}_{n}\left[\text{Diag}\right] = \frac{L^{2}}{4\pi}\sigma_{\chi,\psi}^{2} + O(L) + O_{L}(1/n),
	\]
	which is exactly Lemma \ref{exp diag}.
\end{proof}

\subsection{Estimating $\mathbb{E}_{n}\left[\text{Diag}^{2}\right]$}
\begin{proof} [Proof of Lemma \ref{exp diag2}]
	For $T\gg1$ we know:
	\[
		\text{Diag}=\frac{1}{2\pi}\sum_{\gamma\in\mathcal{P}_{0}}\sum_{a\geq1}f_{n}(\gamma,a)^{2}.
	\]
	Using the fact that $\hat{\psi}$ is compactly supported in $[-1,1]$ we get:
	\[
		\text{Diag}^{2}=\frac{1}{4\pi^{2}}\left[\sum_{\gamma,\delta\in\mathcal{P}_{0}^{\leq L}}f_{n}(\gamma,1)^{2}f_{n}(\delta,1)^{2}+
		\sum_{\gamma\in\mathcal{P}_{0}^{\leq L}}\sum_{\substack{1\leq a,b\leq L\\a+b\geq3}}f_{n}(\gamma,a)^{2}f_{n}(\delta,b)^{2}\right].
	\]
	Denote:
	\[
		\Omega_{1} = \sum_{\gamma,\delta\in\mathcal{P}_{0}^{\leq L}}f_{n}(\gamma,1)^{2}f_{n}(\delta,1)^{2},
	\]
	and:
	\[
		\Omega_{2} = \sum_{\gamma\in\mathcal{P}_{0}^{\leq L}}\sum_{\substack{1\leq a,b\leq L\\a+b\geq3}}f_{n}(\gamma,a)^{2}f_{n}(\delta,b)^{2},
	\]
	so that:
	\[
		\text{Diag}^{2} = \frac{1}{4\pi^{2}}\left[\Omega_{1} + \Omega_{2}\right].
	\]

	\subsubsection{Estimating $\mathbb{E}_{n}[\Omega_{1}]$}
	We have:
	\[
		\Omega_{1} = \sum_{\gamma,\delta\in\mathcal{P}_{0}^{\leq L}}f_{n}(\gamma,1)^{2}f_{n}(\delta,1)^{2}=\sum_{\gamma\neq\delta\in\mathcal{P}_{0}^{\leq L}}f_{n}(\gamma,1)^{2}f_{n}(\delta,1)^{2}+\sum_{\gamma\in\mathcal{P}_{0}^{\leq L}}f_{n}(\gamma,1)^{4}.
	\]
	Theorem \ref{main thm 2} implies that for $\gamma\neq \delta \in \mathcal{P}_{0}$ we have:
	\[
		\mathbb{E}_{n}\left[U_{n}(\gamma)^{2}U_{n}(\delta)^{2}\right]=\mathbb{E}_{n}\left[U_{n}(\gamma)^{2}\right]\mathbb{E}_{n}\left[U_{n}(\delta)^{2}\right] + O_{\gamma,\delta}(1/n),
	\]
	so that:
	\begin{multline*}
		\mathbb{E}_{n}\left[\sum_{\gamma\neq\delta\in\mathcal{P}_{0}^{\leq L}}f_{n}(\gamma,1)^{2}f_{n}(\delta,1)^{2}\right]=\\
		\left(\sum_{\gamma\in\mathcal{P}_{0}^{\leq L}}\mathbb{E}_{n}\left[f_{n}(\gamma,1)^{2}\right]\right)^{2}-\sum_{\gamma\in\mathcal{P}_{0}^{\leq L}}\mathbb{E}_{n}\left[f_{n}(\gamma,1)^{2}\right]^{2} + O_{L}(1/n).
	\end{multline*}
	By definition:
	\[
		\Theta_{1} = \sum_{\gamma\in\mathcal{P}_{0}^{\leq L}}\mathbb{E}_{n}\left[f_{n}(\gamma,1)^{2}\right],
	\]
	hence:
	\[
		\mathbb{E}_{n}[\Omega_{1}] =
		\Theta_{1}^{2} + \sum_{\gamma\in\mathcal{P}_{0}^{\leq L}}\mathbb{E}_{n}\left[f_{n}(\gamma,1)^{4}\right] -\sum_{\gamma\in\mathcal{P}_{0}^{\leq L}}\mathbb{E}_{n}\left[f_{n}(\gamma,1)^{2}\right]^{2} +O_{L}(1/n).
	\]

	Using Lemma \ref{f exp} we have $\mathbb{E}_{n}\left[f_{n}(\gamma,1)^{2}\right] \ll l_{\gamma}^{2}e^{-l_{\gamma}}$ so that:
	\[
		\sum_{\gamma\in\mathcal{P}_{0}^{\leq L}}\mathbb{E}_{n}\left[f_{n}(\gamma,1)^{2}\right]^{2} \ll \sum_{\gamma\in\mathcal{P}_{0}^{\leq L}}l_{\gamma}^{4}e^{-2l_{\gamma}} \ll 1.
	\]
	In addition, using Corollary \ref{main cor 2} and Corollary \ref{dist cor} in addition with \newline $f_{n}(\gamma,1) = s(\gamma,1)U_{n}(\gamma)$ we get $\mathbb{E}_{n}\left[f_{n}(\gamma,1)^{4}\right]\ll  l_{\gamma}^{4}e^{-2l_{\gamma}}$ so that:
	\[
		\sum_{\gamma\in\mathcal{P}_{0}^{\leq L}}\mathbb{E}_{n}\left[f_{n}(\gamma,1)^{4}\right] \ll \sum_{\gamma\in\mathcal{P}_{0}^{\leq L}}l_{\gamma}^{4}e^{-2l_{\gamma}} \ll 1.
	\]
	All in all:
	\[
		\mathbb{E}_{n}[\Omega_{1}] =
		\Theta_{1}^{2} + O(1) + O_{L}(1/n).
	\]

	Using our previous estimate of $\Theta_{1}$:
	\[
		\Theta_{1} = L^{2}r_{\chi}\Sigma_{\text{GOE}}^{2}(\psi)+O(L) + O_{L}(1/n),
	\]
	we get:
	\[
		\mathbb{E}_{n}[\Omega_{1}]=\Theta_{1}^{2} +O(1) + O_{L}(1/n) =
		L^{4}r_{\chi}^{2}\left(\Sigma_{\text{GOE}}^{2}(\psi)\right)^{2}+O(L^{3})+O_{L}(1/n).
	\]
	As $r_{\chi}\Sigma_{\text{GOE}}^{2}(\psi)=\frac{1}{2}\sigma_{\chi,\psi}^{2}$
	we have:
	\[
		\mathbb{E}_{n}[\Omega_{1}]=\frac{L^{4}}{4}\sigma_{\chi,\psi}^{4}+O(L^{3})+O_{L}(1/n).
	\]

	\subsubsection{Estimating $\mathbb{E}_{n}[\Omega_{2}]$}
	Recall that:
	\[
		\Omega_{2} = \sum_{\gamma,\delta\in\mathcal{P}_{0}^{\leq L}}\sum_{\substack{1\leq a,b\leq L\\a+b\geq3}}f_{n}(\gamma,a)^{2}f_{n}(\delta,b)^{2},
	\]
	Corollaries \ref{main cor 2} and \ref{dist cor} imply that for all $\gamma,\delta\in \mathcal{P}_{0}^{\leq L}$ and all positive integers $a,b$ we have\footnote{
	A proof similar to that of Lemma \ref{R bound} shows that for $\gamma\neq\delta$ we have $\mathbb{E}_{n}\left[U_{n}(\gamma^{a})^{2}U_{n}(\delta^{b})^{2}\right]\ll \sigma(a)\sigma(b) + O_{\gamma,\delta,a,b}(1/n)$, while:
	\[
		\mathbb{E}_{n}\left[U_{n}(\gamma^{a})^{2}U_{n}(\gamma^{b})^{2}\right]\ll \sigma_{3}(\gcd(a,b)) + \sigma(a)\sigma(b) + O_{\gamma,a,b}(1/n),
	\]
	where $\sigma_{3}(x) = \sum_{d|x}d^{3}$. Using $\gcd(a,b)\leq \min\{a,b\}$ and $\sigma_{3}(x) \ll d(x)x^{3}\ll x^{4}$ yields the given bound.
	}:
	\[
		\mathbb{E}_{n}\left[U_{n}(\gamma^{a})^{2}U_{n}(\delta^{b})^{2}\right]\ll a^{2}b^{2} + O_{\gamma,\delta,a,b}(1/n),
	\]
	so that:
	\[
		\mathbb{E}_{n}[\Omega_{2}] \ll \sum_{\gamma,\delta\in\mathcal{P}_{0}^{\leq L}}\sum_{\substack{1\leq a,b\leq L\\a+b\geq3}}l_{\gamma}^{2}l_{\delta}^{2}a^{2}b^{2}e^{-al_{\gamma}-bl_{\delta}} + O_{L}(1/n).
	\]

	Summation by parts gives:
	\[
		\sum_{\gamma\in\mathcal{P}_{0}^{\leq L}}\sum_{a\geq1}a^{2}l_{\gamma}^{2}e^{-al_{\gamma}}\ll L^{2},
	\]
	and:
	\[
		\sum_{\gamma\in\mathcal{P}_{0}^{\leq L}}\sum_{a\geq2}a^{2}l_{\gamma}^{2}e^{-al_{\gamma}}\ll1,
	\]
	so that:
	\[
		\sum_{\gamma,\delta\in\mathcal{P}_{0}^{\leq L}}\sum_{\substack{1\leq a,b\leq L\\a+b\geq3}}l_{\gamma}^{2}l_{\delta}^{2}a^{2}b^{2}e^{-al_{\gamma}-bl_{\delta}}\ll L^{2}.
	\]
	Thus:
	\[
		\mathbb{E}_{n}[\Omega_{2}] \ll L^{2} + O_{L}(1/n).
	\]

	Combining our estimates of $\mathbb{E}_{n}[\Omega_{1}],\mathbb{E}_{n}[\Omega_{2}]$ we have:
	\[
		\begin{split}
			\mathbb{E}_{n}[\text{Diag}^{2}] & =\frac{1}{4\pi^{2}}\left[\mathbb{E}_{n}[\Omega_{1}] + \mathbb{E}_{n}[\Omega_{2}]\right]  \\
			                                & =\frac{L^{4}}{16\pi^{2}}\sigma_{\chi,\psi}^{4}+O(L^{3})+O_{L}(1/n)+O(L^{2}) + O_{L}(1/n) \\
			                                & =\frac{L^{4}}{16\pi^{2}}\sigma_{\chi,\psi}^{4}+O(L^{3})+O_{L}(1/n).
		\end{split}
	\]
	proving Lemma \ref{exp diag2}.
\end{proof}

\bibliographystyle{abbrv}
\bibliography{EIGENVALUE_STATISTICS.bib}

\begin{thebibliography}{10}

\bibitem{BR2}
M.~V. Berry.
\newblock {Semiclassical mechanics of regular and irregular motion}.
\newblock {\em Les Houches lecture series}, 36:171--271, 1983.

\bibitem{BR1}
M.~V. Berry.
\newblock {Fluctuations in numbers of energy levels}.
\newblock 1986.

\bibitem{BP}
P.~Billingsley.
\newblock {\em {Probability and Measure}}.
\newblock Wiley Series in Probability and Statistics. Wiley, 2012.

\bibitem{PB}
P.~Buser.
\newblock {Geometry and spectra of compact Riemann surfaces, vol. 106}.
\newblock {\em Progress in Mathematics. Birkh{\"a}user Boston Inc., Boston},
  1992.

\bibitem{HW}
G.~H. Hardy and E.~M. Wright.
\newblock {\em {An introduction to the theory of numbers}}.
\newblock Oxford university press, 1979.

\bibitem{HT}
A.~Hatcher.
\newblock {\em {Algebraic Topology}}.
\newblock Algebraic Topology. Cambridge University Press, 2002.

\bibitem{HJ}
D.~A. Hejhal.
\newblock {The Selberg trace formula for $\uppercase{PSL}(2, \mathbb{R})$}.
\newblock {\em Lecture Notes in Math}, 1001, 1796.

\bibitem{HB}
H.~Huber.
\newblock {Zur analytischen Theorie hyperbolischer Raumformen und
  Bewegungsgruppen}.
\newblock {\em Mathematische Annalen}, 138(1):1--26, Feb 1959.

\bibitem{LS}
M.~W. Liebeck and A.~Shalev.
\newblock {Fuchsian groups, coverings of Riemann surfaces, subgroup growth,
  random quotients and random walks}.
\newblock {\em Journal of Algebra}, 276(2):552--601, 2004.

\bibitem{MP1}
M.~Magee and D.~Puder.
\newblock {The Asymptotic Statistics of Random Covering Surfaces}.
\newblock {\em Forum of Mathematics, Pi}, 11:e15, 2023.

\bibitem{surfaces}
Y.~Maoz.
\newblock Asymptotic independence for random permutations from surface groups.
\newblock {\em arXiv preprint arXiv:2310.18637}, 2023.

\bibitem{Naud}
F.~Naud.
\newblock Random covers of compact surfaces and smooth linear spectral
  statistics.
\newblock In {\em Annales Henri Poincar{\'e}}, pages 1--27. Springer, 2025.

\bibitem{PZ}
D.~Puder and T.~Zimhoni.
\newblock {Local Statistics of Random Permutations from Free Products}.
\newblock {\em International Mathematics Research Notices}, page rnad207, 09
  2023.

\bibitem{RD}
B.~Randol.
\newblock {The Riemann hypothesis for Selberg’s zeta-function and the
  asymptotic behavior of eigenvalues of the Laplace operator}.
\newblock {\em Transactions of the American Mathematical Society},
  236:209--223, 1978.

\bibitem{Ru}
Z.~Rudnick.
\newblock Goe statistics on the moduli space of surfaces of large genus.
\newblock {\em Geometric and Functional Analysis}, 33(6):1581--1607, 2023.

\bibitem{RW}
Z.~Rudnick and I.~Wigman.
\newblock On the central limit theorem for linear eigenvalue statistics on
  random surfaces of large genus.
\newblock {\em Journal d'Analyse Math{\'e}matique}, 151(1):293--302, 2023.

\bibitem{RW2}
Z.~Rudnick and I.~Wigman.
\newblock Almost sure goe fluctuations of energy levels for hyperbolic surfaces
  of high genus.
\newblock In {\em Annales Henri Poincar{\'e}}, pages 1--13. Springer, 2025.

\end{thebibliography}

\end{document}